\documentclass[draft]{adm-paper}
\usepackage{amsfonts,amsmath,amsthm,amscd,amssymb,latexsym}

\begin{document}

\title{Representation of real numbers  by the alternating Cantor series}%Paper title
\author{Symon Serbenyuk}%Authors names
\shorttitle{Alternating Cantor series}%Short paper title for the headers
\shortauthor{S. Serbenyuk}%Short authors names for the headers
\address[S.~Serbenyuk]{Institute of Mathematics of the National Academy of Sciences of Ukraine, Tereschenkivska St.~3,~01601 Kyiv, Ukraine}{ simon.mathscience@imath.kiev.ua, simon6@ukr.net}{}%

%\thanks{Type your thanks here..}

\date{}

%Change theorem environments according to your needs...
\theoremstyle{plain}
\newtheorem{theorem}{Theorem}
\newtheorem{lemma}{Lemma}
\newtheorem{proposition}{Proposition}
\newtheorem{corollary}{Corollary}
\newtheorem{definition}{Definition}
\theoremstyle{definition}
\newtheorem{example}{Example}
\newtheorem{remark}{Remark}

\maketitle

\begin{abstract}
The article is devoted to the alternating Cantor series.
 It is proved that any real number belonging to $[a_0-1;a_0]$, where $a_0=\sum^{\infty} _{k=1} {\frac{d_{2k}-1}{d_1d_2...d_{2k}}} $, has no more than two representations by the series and only numbers from countable subset of real numbers have two representations.
Geometry of the representation, properties of cylinders sets and semicylinders, simplest metric problems are investigated. Some applications of the series in fractal theory and relation between positive and alternating Cantor series are described. A shift  operator and its some applications, set of incomplete sums are studied. Necessary and sufficient conditions for representations of rational numbers by the   alternating Cantor series are formulated.
\end{abstract}

\subjclass{2001}{26A24,  11K55, 26A48, 11J83; }

\keywords{alternating Cantor series, positive Cantor series, nega-D-representation, nega-$(d_n)$-representation, cylinder, semicylinder, shift  operator, Hausdorff-Besicovitch dimension, set of incomplete sums}

\section*{Introduction}

At  present investigating of various numeral systems is useful  for development of metric, probability and fractal theories of real numbers, for study of fractal and others properties of mathematical objects with complicated local structure such as  continuous nowhere differentiable or singular functions,  random variables of Jessen-Wintner type, DP-transformations (transformations preserving the fractal Hausdorff-Besicovitch  \ \ dimension),  dynamical systems with chaotic trajectories, etc. \cite{Pra98, Pra92}.

There exist systems of representation of real numbers with finite or infinite alphabet, with redundant  digits or with zero redundancy.
 An s-adic numeral system is an example of real numbers encoding by finite alphabet and numbers representations by Ostrogradsky \cite{ABPT07, BPP09, P09}, Sylvester \cite{PZ09}, L\"uroth series \cite{PK09, PZh08},  or regular continued fractions, $A_2$-continued fractions \cite{DKP09}, polybasic $\tilde{Q}$-representation \cite{Pra98}, etc., are examples of  encoding of real numbers by infinite alphabet.  Representation \cite{C1869, B10,  R09, S13}  by the positive Cantor series  
\begin{equation}
\label{1}
\frac{\varepsilon_1}{d_1}+\frac{\varepsilon_2}{d_1d_2}+...+\frac{\varepsilon_n}{d_1d_2...d_n}+..., \varepsilon_n \in A_{d_n},
\end{equation}
 where $(d_n)$ is a fixed sequence of positive integers, $d_n > 1$, and $(A_{d_n} )$ is a sequence of sets  $A_{d_n} \equiv \{0,1,...,d_n-1\}$, is an example of polybasic numeral system with zero redundancy. The last-mentioned  encoding of real numbers has finite alphabet, when the sequence  $(d_n)$ is bounded. It is obvious that real numbers representation by the positive Cantor series is generalization of classic s-adic numeral system. It is easy to see that the series is "similar"  to  the following series 
$$
\sum^{\infty} _{n=1} {\frac{1}{a_1a_2...a_n}},
$$
where, $(a_n)$ is a monotone non-decreasing  sequence of positive integers, $a_1 \ge 1$. This series is Engel series \cite{PH06}. 

In 1869, Georg Cantor \cite{C1869} considered  expansions of real numbers in the series~\eqref{1}. Now there exist many papers  \cite{C1869, LT13,  B10,  R09, S13} devoted to study properties of representation of real numbers by the positive Cantor series but many problems related  to these series are not solved  completely. For example, criteria of representation of rational numbers, modeling of functions with  complicated local structure, etc.

Since expansions of numbers by the positive Cantor series are useful for study complicated objects of fractal analysis, I introduce a notion of the alternating Cantor series. The alternating Cantor series is not considered in publications earlier and generalizes nega-$s$-adic numeral system. In the paper I give a foundation of metric theory of representation of real numbers by the alternating Cantor series and consider some related problems of mathematical analysis.

\section{The object of research}

Let $(d_n)$ be a fixed sequence of numbers from $\mathbb N \setminus \{1\}$ and $(A_{d_n})$  be a sequense of sets $A_{d_n} \equiv \{0,1,2,...,d_n-~1\}$. 
\begin{definition} 
The sum
\begin{equation}
\label{nega-cs-def1}
-\frac{\varepsilon_1}{d_1}+\frac{\varepsilon_2}{d_1d_2}-\frac{\varepsilon_3}{d_1d_2d_3}+...+\frac{(-1)^n\varepsilon_n}{d_1d_2d_3...d_n}+...,
\end{equation}
where $\varepsilon_n \in A_{d_n}$ for any  $ n \in \mathbb N$, is called {\itshape the alternating Cantor series}. 
\end{definition}

A number $d_n$ is called називатимемо {\itshape the $n$th element} and  $\varepsilon_n $ is called {\itshape the $n$th digit} of the sum  \eqref{nega-cs-def1}.

Let us denote by $\Delta^{-D} _{\varepsilon_1\varepsilon_2...\varepsilon_n...}$ any number having representation \eqref{nega-cs-def1}. This notation is called {\itshape  representation of $x$  by the alternating Cantor series} or {\itshape   nega-D-representation}.

If sequence $(d_n)$ is a purely periodic sequence with simple period $(s)$, where $s>1$ is a fixed positive integer number, then the sum  \eqref{nega-cs-def1} will be transformed into the following sum
$$
-\frac{\varepsilon_1}{s}+\frac{\varepsilon_2}{s^2}-\frac{\varepsilon_3}{s^3}+...+\frac{(-1)^n\varepsilon_n}{s^n}+..., ~~~\varepsilon_n \in \{0,1,...,s-1\}.
$$
 The last-mentioned sum is nega-s-adic  representation of numbers from  $[-\frac{s}{s+1};\frac{1}{s+1}]$.

   Following sums are examples of alternating Cantor series.
\begin{enumerate}
\item 
$$
\sum^{\infty} _{n=1} {\frac{(-1)^n\varepsilon_n}{s^{\alpha_1+\alpha_2+...+\alpha_n}}}, 
$$
where $\alpha_n$  belongs to finite subset of positive integer numbers \ \ \   and $1<~s \in~\mathbb N$  is a fixed  number and  $\varepsilon_n \in \{0,1,...,s^{\alpha_n}-1\}$ for each $n \in \mathbb N$.
\item
$$
\sum^{\infty} _{n=1} {\frac{(-1)^n\varepsilon_n}{2 \cdot 3 \cdot ... \cdot (n+1)}}, ~~~\varepsilon_n \in \{0,1,...,n\}.
$$
\item
$$
\sum^{\infty} _{n=1} {\frac{(-1)^n\varepsilon_n}{p_1p_2...p_n}}, 
$$
where $(p_n)$ is an increasing sequence of all  prime numbers. 
\end{enumerate}

\begin{lemma}
Every alternating Cantor series is absolutely convergent series and its sum belongs to $[a_0-1;a_0]$, where 
$$
a_0=\sum^{\infty} _{n=1} {\frac{d_{2n}-1}{d_1d_2...d_{2n}}}=\sum^{\infty} _{n=1} {\frac{(-1)^{n+1}}{d_1d_2...d_n}}.
$$
\end{lemma}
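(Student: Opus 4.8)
The plan is to split the statement into (i) absolute convergence and (ii) the two-sided bound $a_0-1\le x\le a_0$ for the sum $x$ of \eqref{nega-cs-def1}, everything resting on one telescoping identity.

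First I would record that for every $n$,
$$
\frac{d_n-1}{d_1d_2\cdots d_n}=\frac{1}{d_1d_2\cdots d_{n-1}}-\frac{1}{d_1d_2\cdots d_n},
$$
with the convention that an empty product equals $1$, so the partial sums telescope: $\sum_{n=1}^{N}\frac{d_n-1}{d_1\cdots d_n}=1-\frac{1}{d_1\cdots d_N}$. Since $d_n\ge 2$ we have $d_1\cdots d_N\to\infty$, whence $\sum_{n=1}^{\infty}\frac{d_n-1}{d_1\cdots d_n}=1$. Because $0\le\varepsilon_n\le d_n-1$, the series $\sum_n\left|\frac{(-1)^n\varepsilon_n}{d_1\cdots d_n}\right|$ is dominated termwise by this convergent series, which gives absolute convergence of \eqref{nega-cs-def1}; in particular its terms may be freely grouped.

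Next I would pin down the two forms of $a_0$ together with the companion identity for the odd indices. Keeping only the even-indexed differences above,
$$
\sum_{k=1}^{\infty}\frac{d_{2k}-1}{d_1\cdots d_{2k}}=\sum_{k=1}^{\infty}\left(\frac{1}{d_1\cdots d_{2k-1}}-\frac{1}{d_1\cdots d_{2k}}\right)=\sum_{n=1}^{\infty}\frac{(-1)^{n+1}}{d_1\cdots d_n},
$$
which is the asserted second expression for $a_0$; and since $1=\sum_{k}\frac{d_{2k-1}-1}{d_1\cdots d_{2k-1}}+\sum_{k}\frac{d_{2k}-1}{d_1\cdots d_{2k}}$, it follows that $\sum_{k}\frac{d_{2k-1}-1}{d_1\cdots d_{2k-1}}=1-a_0$.

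Finally I would read off the bounds by grouping \eqref{nega-cs-def1} into consecutive pairs in two ways. Using $\varepsilon_{2k-1}\ge 0$ and $\varepsilon_{2k}\le d_{2k}-1$,
$$
x=\sum_{k=1}^{\infty}\left(-\frac{\varepsilon_{2k-1}}{d_1\cdots d_{2k-1}}+\frac{\varepsilon_{2k}}{d_1\cdots d_{2k}}\right)\le\sum_{k=1}^{\infty}\frac{d_{2k}-1}{d_1\cdots d_{2k}}=a_0;
$$
peeling off the first term and then pairing, and using $\varepsilon_{2k}\ge 0$ and $\varepsilon_n\le d_n-1$,
$$
x=-\frac{\varepsilon_1}{d_1}+\sum_{k=1}^{\infty}\left(\frac{\varepsilon_{2k}}{d_1\cdots d_{2k}}-\frac{\varepsilon_{2k+1}}{d_1\cdots d_{2k+1}}\right)\ge -\frac{d_1-1}{d_1}-\sum_{k=1}^{\infty}\frac{d_{2k+1}-1}{d_1\cdots d_{2k+1}}=-\sum_{k=1}^{\infty}\frac{d_{2k-1}-1}{d_1\cdots d_{2k-1}}=a_0-1.
$$
There is no deep obstacle here: the whole content is the telescoping identity, and the only thing that needs care is the reindexing that fuses $\frac{d_1-1}{d_1}$ with the tail $\sum_{k\ge1}\frac{d_{2k+1}-1}{d_1\cdots d_{2k+1}}$ into the complete odd-index sum $\sum_{k\ge1}\frac{d_{2k-1}-1}{d_1\cdots d_{2k-1}}=1-a_0$, together with the observation that the regroupings are legitimate thanks to the absolute convergence proved at the outset.
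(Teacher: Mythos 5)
Your proof is correct and follows essentially the same route as the paper, which merely asserts (i) convergence of $\sum_n \varepsilon_n/(d_1\cdots d_n)$ and (ii) the two-sided bound $a_0-1\le S\le a_0$ without further detail; you supply the missing substance via the telescoping identity $\frac{d_n-1}{d_1\cdots d_n}=\frac{1}{d_1\cdots d_{n-1}}-\frac{1}{d_1\cdots d_n}$, the comparison test, and the pairwise grouping, all of which check out. In particular your identification of $\sum_k\frac{d_{2k-1}-1}{d_1\cdots d_{2k-1}}$ with $1-a_0$ and the reindexing in the lower bound are exactly right, so nothing further is needed.
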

\begin{proof} The statement of the lemma follows from the next  propositions
\begin{itemize} 
\item   the series
$$
\sum^{\infty} _{n=1} {\frac{\varepsilon_n}{d_1d_2...d_n}}
$$
is convergent;
\item 
$$
-1+\sum^{\infty} _{n=1} {\frac{(-1)^{n+1}}{d_1d_2...d_n}} \le S \le \sum^{\infty} _{n=1} {\frac{(-1)^{n+1}}{d_1d_2...d_n}},
$$
where $S$ is a sum of  \eqref{nega-cs-def1}. 
\end{itemize}
\end{proof}

\begin{lemma}
Following relations 
$$
 \frac{a_n-1}{d_1...d_n} \le r_n=\frac{(-1)^n}{d_1d_2...d_n}{\sum^{\infty} _{k=1} {\frac{(-1)^k \varepsilon_{n+k}}{d_{n+1}...d_{n+k}}}}\le \frac{a_n}{d_1...d_n}
$$
for $n=2m, m \in \mathbb N$ and
$$
  -\frac{a_n}{d_1d_2...d_n} \le r_n \le \frac{1-a_n}{d_1d_2...d_n} 
$$
 for $n=2m-1, m \in \mathbb N$,  where
$$
a_n=\sum^{\infty} _{k=1} {\frac{(-1)^{k+1}}{d_{n+1}d_{n+2}...d_{n+k}}},
$$
are true for residual  $r_n$ of the series \eqref{nega-cs-def1}.
\end{lemma}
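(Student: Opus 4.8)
The plan is to reduce Lemma~2 to Lemma~1 by recognizing the inner series in the definition of $r_n$ as the sum of an alternating Cantor series built from a shifted tail of $(d_n)$. Fix $n$ and set $e_j := d_{n+j}$, $\delta_j := \varepsilon_{n+j}$ for $j \in \mathbb N$. Then $(e_j)$ is again a sequence in $\mathbb N \setminus \{1\}$ and $\delta_j \in A_{e_j}$, so
$$
T_n := \sum_{k=1}^{\infty} \frac{(-1)^k \varepsilon_{n+k}}{d_{n+1} d_{n+2} \cdots d_{n+k}} = \sum_{j=1}^{\infty} \frac{(-1)^j \delta_j}{e_1 e_2 \cdots e_j} = -\frac{\delta_1}{e_1} + \frac{\delta_2}{e_1 e_2} - \frac{\delta_3}{e_1e_2e_3} + \cdots
$$
is exactly the sum of an alternating Cantor series with elements $(e_j)$ and digits $(\delta_j)$; observe also that $r_n = \tfrac{(-1)^n}{d_1\cdots d_n}\, T_n$ by the very formula given in the statement.

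Next I would apply Lemma~1 to this series. It guarantees that $T_n$ is well defined (the series is absolutely convergent, so $r_n$ is indeed the residual of \eqref{nega-cs-def1}) and that $a_n - 1 \le T_n \le a_n$, where
$$
a_n = \sum_{j=1}^{\infty} \frac{(-1)^{j+1}}{e_1 \cdots e_j} = \sum_{k=1}^{\infty} \frac{(-1)^{k+1}}{d_{n+1} \cdots d_{n+k}},
$$
which is precisely the quantity $a_n$ appearing in the statement. Since $d_1 d_2 \cdots d_n > 0$, it then remains only to divide this two-sided estimate by $d_1 \cdots d_n$ and to account for the sign of $(-1)^n$.

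Finally, the sign bookkeeping yields the two announced cases. If $n = 2m$, then $(-1)^n = 1$, and multiplying $a_n - 1 \le T_n \le a_n$ by the positive number $(d_1\cdots d_n)^{-1}$ gives $\frac{a_n-1}{d_1\cdots d_n} \le r_n \le \frac{a_n}{d_1\cdots d_n}$. If $n = 2m-1$, then $(-1)^n = -1$; multiplying the chain by $-1$ reverses it to $-a_n \le -T_n \le 1 - a_n$, and dividing by $d_1 \cdots d_n > 0$ gives $-\frac{a_n}{d_1\cdots d_n} \le r_n \le \frac{1-a_n}{d_1\cdots d_n}$, as claimed. I do not anticipate a genuine obstacle; the only points needing care are verifying that the shifted sequence $(e_j)$ still satisfies the hypotheses of Lemma~1 (so the reduction is legitimate) and keeping the direction of the inequalities correct in the odd case, where the factor $(-1)^n$ is negative.
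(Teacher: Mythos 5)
Your proof is correct: the tail $T_n$ is itself an alternating Cantor series with elements $d_{n+j}$ and digits $\varepsilon_{n+j}$, so Lemma~1 gives $a_n-1\le T_n\le a_n$, and the sign bookkeeping for $(-1)^n$ yields exactly the two stated chains. The paper states this lemma without proof, and your reduction to Lemma~1 via the shifted sequence is precisely the intended argument (the same extremal-digit estimate underlying the paper's proof of Lemma~1), so nothing further is needed.
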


\section{Representation of real numbers  by the alternating Cantor series}

\begin{lemma}
\label{nega-cs-lem1}
Each number $x \in [a_0-1;a_0]$ can be represented by the series~\eqref{nega-cs-def1}.
\end{lemma}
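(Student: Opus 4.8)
The plan is to construct the digits $\varepsilon_n$ recursively by a greedy procedure and to control the residuals $r_n$ by the estimates of the preceding lemma on the residuals of \eqref{nega-cs-def1}. Fix $x\in[a_0-1;a_0]$, put $S_0=0$ and $r_0=x$. Since $a_0=\sum_{k=1}^{\infty}(-1)^{k+1}/(d_1\cdots d_k)$ and the empty product equals $1$, the hypothesis $a_0-1\le x\le a_0$ is precisely the ``even index'' bound for $r_0$. Arguing by induction, assume $\varepsilon_1,\dots,\varepsilon_{n-1}$ (with $\varepsilon_j\in A_{d_j}$) have been chosen so that $r_{n-1}=x-S_{n-1}$ satisfies the bound of that lemma at level $n-1$. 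I would then seek $\varepsilon_n\in A_{d_n}$ for which $r_n=r_{n-1}-\dfrac{(-1)^n\varepsilon_n}{d_1\cdots d_n}$ satisfies the corresponding bound at level $n$.

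Rewriting those two-sided inequalities shows that $\varepsilon_n$ is admissible if and only if $d_1\cdots d_n\,r_{n-1}$ lies in the closed interval of length $1$ with endpoints $\varepsilon_n+a_n-1$ and $\varepsilon_n+a_n$ when $n$ is even, and in the interval with endpoints $-\varepsilon_n-a_n$ and $1-\varepsilon_n-a_n$ when $n$ is odd. Letting $\varepsilon_n$ run over $\{0,1,\dots,d_n-1\}$, these intervals overlap only at endpoints, and their union is $[a_n-1,\;a_n+d_n-1]$ for even $n$ and $[1-d_n-a_n,\;1-a_n]$ for odd $n$. It remains to check that the inductive hypothesis on $r_{n-1}$ forces $d_1\cdots d_n\,r_{n-1}$ exactly into this union. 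The decisive point is the identity
$$
d_n a_{n-1}=1-a_n,
$$
which is immediate from $a_{n-1}=\sum_{k\ge1}(-1)^{k+1}/(d_n d_{n+1}\cdots d_{n+k-1})$ by pulling out the factor $1/d_n$. Multiplying the level-$(n-1)$ estimates by $d_1\cdots d_n$ and substituting this identity turns their endpoints into exactly the endpoints of the above union; hence an admissible digit $\varepsilon_n$ always exists. (At a common endpoint of two neighbouring intervals both of the corresponding values of $\varepsilon_n$ are admissible — this is the source of the non-uniqueness that the main theorem of the paper quantifies.) Iterating over all $n$ produces a sequence $(\varepsilon_n)$.

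Finally I would pass to the limit. Since $0\le a_n\le 1/d_{n+1}\le\tfrac12$, each level-$n$ estimate gives $|r_n|\le(1-a_n)/(d_1\cdots d_n)\le 2^{-n}$, so $r_n\to0$ and therefore $S_n\to x$; that is, $x=\Delta^{-D}_{\varepsilon_1\varepsilon_2\ldots}$, which is the assertion. In this argument the only genuine work is the endpoint bookkeeping of the second paragraph: once the relation $d_n a_{n-1}=1-a_n$ is recorded the covering of the admissible ranges is exact, so I expect the principal — and still routine — obstacle to be merely keeping the two parity cases organised and verifying that the $n=1$ step matches the hypothesis on $x$ under the conventions $d_1\cdots d_0=1$ and the stated value of $a_0$.
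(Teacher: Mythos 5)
Your proof is correct and follows essentially the same greedy, digit-by-digit construction as the paper: at each stage the admissible values of $\varepsilon_n$ partition the range permitted for the residual into abutting intervals of length $(d_1\cdots d_n)^{-1}$, and the residual tends to $0$. Your packaging via the recursion $d_n a_{n-1}=1-a_n$ is a cleaner way of doing the paper's interval bookkeeping, and the overlap-at-endpoints observation correctly accounts for the two representations of nega-D-rational points that the paper treats as a separate case.
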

\begin{proof} It is obvious that 
$$
a_0=\Delta^{-D} _{0[d_2-1]0[d_4-1]0[d_6-1]0...},
$$
$$
a_0-1=\Delta^{-D} _{[d_1-1]0[d_3-1]0[d_5-1]0...}.
$$
Since $x$ be an arbitrary number from $(a_0-1;a_0)$, 
$$
-\frac{\varepsilon_1}{d_1}-\sum^{\infty} _{k=2} {\frac{d_{2k-1}-1}{d_1d_2...d_{2k-1}}}< x \le -\frac{\varepsilon_1}{d_1}+\sum^{\infty} _{k=1} {\frac{d_{2k}-1}{d_1d_2...d_{2k}}}
$$
with $0 \le \varepsilon_1 \le d_1-1$ and since
$$
[a_0-1;a_0]=I_0=\bigcup^{d_1-1} _{i=0} {\left[-\frac{i}{d_1}- \sum^{\infty} _{k=2} {\frac{d_{2k-1}-1}{d_1d_2...d_{2k-1}}}; -\frac{i}{d_1}+\sum^{\infty} _{k=1} {\frac{d_{2k}-1}{d_1d_2...d_{2k}}} \right]},
$$
it is obtained that
$$
- \sum^{\infty} _{k=2} {\frac{d_{2k-1}-1}{d_1d_2...d_{2k-1}}}<x+\frac{\varepsilon_1}{d_1}\le \sum^{\infty} _{k=1} {\frac{d_{2k}-1}{d_1d_2...d_{2k}}}.
$$
Let  $x+\frac{\varepsilon_1}{d_1}=x_1$, then following cases are  obtained:
\begin{enumerate}
 \item If equality
$$
 x_1= \sum^{\infty} _{k=1} {\frac{d_{2k}-1}{d_1d_2...d_{2k}}}.
$$
 holds, then
$$
x=\Delta^{-D} _{\varepsilon_1[d_2-1]0[d_4-1]0...} ~\mbox{or} ~ x=\Delta^{-D} _{[\varepsilon_1-1]0[d_3-1]0[d_5-1]0...}.
$$
\item If the equality is false, then $x=-\frac{\varepsilon_1}{d_1}+x_1$, where
$$
\frac{\varepsilon_2}{d_1d_2}-\sum^{\infty} _{k=2} {\frac{d_{2k-1}-1}{d_1d_2...d_{2k-1}}}\le x_1<\frac{\varepsilon_2}{d_1d_2}+\sum^{\infty} _{k=2} {\frac{d_{2k}-1}{d_1d_2...d_{2k}}}.
$$
\end{enumerate}
In the same way,  let  $x_2=x_1-\frac{\varepsilon_2}{d_1d_2}$, then
\begin{enumerate}
\item if equality
$$
x_2=\sum^{\infty} _{k=2} {\frac{d_{2k-1}-1}{d_1d_2...d_{2k-1}}}.
$$
holds, then
$$
x=\Delta^{-D} _{\varepsilon_1\varepsilon_2[d_3-1]0[d_5-1]0...} ~\mbox{or} ~ x=\Delta^{-D} _{\varepsilon_1[\varepsilon_2-1]0[d_4-1]0[d_6-1]0...}.
$$
\item In a different case
$$
x=-\frac{\varepsilon_1}{d_1}+\frac{\varepsilon_2}{d_1d_2}+x_2, ~\mbox{where}
$$
$$
-\frac{\varepsilon_3}{d_1d_2d_3}-\sum^{\infty} _{k=3} {\frac{d_{2k-1}-1}{d_1d_2...d_{2k-1}}}<x_2 \le -\frac{\varepsilon_3}{d_1d_2d_3}+\sum^{\infty} _{k=2} {\frac{d_{2k}-1}{d_1d_2...d_{2k}}}, ~\mbox{etc.}
$$
\end{enumerate}
So, for positive integer $m$,
$$
-\sum_{k>\frac{m+2}{2}} {\frac{d_{2k-1}-1}{d_1d_2...d_{2k-1}}}<x_m-\frac{(-1)^{m+1}\varepsilon_{m+1}}{d_1d_2...d_{m+1}}<\sum_{k>\frac{m+1}{2}} {\frac{d_{2k}-1}{d_1d_2...d_{2k}}}.
$$
Moreover, the following cases are possible:
\begin{enumerate}
\item 
$$
x_{m+1}=\begin{cases}
\sum_{k>\frac{m+2}{2}} {\frac{d_{2k-1}-1}{d_1d_2...d_{2k-1}}},&\text{if $m$ is an  odd number;}\\
\\
\sum_{k>\frac{m+1}{2}} {\frac{d_{2k}-1}{d_1d_2...d_{2k}}},&\text{if $m$ is an even number.}
\end{cases}
$$
In this case
$$
x=\Delta^{-D} _{\varepsilon_1\varepsilon_2...\varepsilon_{m+1}[d_{m+2}-1]0[d_{m+4}-1]0...} 
$$
or
$$
 x=\Delta^{-D} _{\varepsilon_1...\varepsilon_{m}[\varepsilon_{m+1}-1]0[d_{m+3}-1]0[d_{m+5}-1]0...}.
$$
\item If there does not exist $m \in \mathbb N$ such that the last-mentioned system is true, then   
$$
x=-\frac{\varepsilon_1}{d_1}+x_1=...=-\frac{\varepsilon_1}{d_1}+\frac{\varepsilon_2}{d_1d_2}-\frac{\varepsilon_3}{d_1d_2d_3}+...+\frac{(-1)^n\varepsilon_n}{d_1d_2...d_n}+x_n=....
$$
\end{enumerate}
Whence,
$$
x=\sum^{\infty} _{n=1} {\frac{(-1)^n \varepsilon_n}{d_1d_2...d_n}}.
$$
\end{proof}

\begin{lemma}
\label{nega-cs-lem2}
Numbers 
$$
x=\Delta^{-D} _{\varepsilon_1\varepsilon_2...\varepsilon_{m-1}\varepsilon_m \varepsilon_{m+1}...} ~\mbox{and}~ x^{'}=\Delta^{-D} _{\varepsilon_1\varepsilon_2...\varepsilon_{m-1}\varepsilon^{'} _m\varepsilon^{'} _{m+1}...},
$$
 where  $\varepsilon_m \ne \varepsilon^{'} _m$, are equal    iff one of the following systems
$$
\left\{
\begin{array}{rcl}
\varepsilon_{m+2i-1}&=&d_{m+2i-1}-1,\\
\varepsilon_{m+2i}&= 0 &=\varepsilon^{'} _{m+2i-1},\\
\varepsilon^{'} _{m+2i}&=&d_{m+2i}-1,\\
\varepsilon^{'} _m & = &\varepsilon_m-1;\\
\end{array}
\right.
\mbox{or}
\left\{
\begin{array}{rcl}
\varepsilon_{m+2i}&=&d_{m+2i}-1,\\
\varepsilon_{m+2i-1}&= 0 &=\varepsilon^{'} _{m+2i},\\
\varepsilon^{'} _{m+2i-1}&=&d_{m+2i-1}-1,\\
\varepsilon^{'} _m -1& = &\varepsilon_m;\\
\end{array}
\right.
$$
is satisfied for all $i \in \mathbb N$.
\end{lemma}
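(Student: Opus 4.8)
The plan is to reduce the equality $x=x'$ to an equality of the two ``tails'' starting at position $m$, and then to exploit the sharp two-sided estimates for such tails supplied by Lemma~2.

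First I would observe that, since the digits $\varepsilon_1,\dots,\varepsilon_{m-1}$ are common to both representations, $x=x'$ holds if and only if
\[
\frac{(-1)^m\varepsilon_m}{d_1\cdots d_m}+\sum_{k=1}^{\infty}\frac{(-1)^{m+k}\varepsilon_{m+k}}{d_1\cdots d_{m+k}}=\frac{(-1)^m\varepsilon'_m}{d_1\cdots d_m}+\sum_{k=1}^{\infty}\frac{(-1)^{m+k}\varepsilon'_{m+k}}{d_1\cdots d_{m+k}}.
\]
Multiplying through by $(-1)^m d_1\cdots d_m$ and setting $T=\sum_{k\ge1}\frac{(-1)^k\varepsilon_{m+k}}{d_{m+1}\cdots d_{m+k}}$, with $T'$ the analogous sum in the $\varepsilon'_{m+k}$, this becomes $\varepsilon_m+T=\varepsilon'_m+T'$. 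Applying Lemma~2 at $n=m$ to each of the two digit sequences (there the residual equals $(-1)^m/(d_1\cdots d_m)$ times precisely such a tail, so the stated inequalities reduce, for both parities of $m$, to $a_m-1\le T\le a_m$) shows $T,T'\in[a_m-1,a_m]$, where $a_m=\sum_{k\ge1}\frac{(-1)^{k+1}}{d_{m+1}\cdots d_{m+k}}$.

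Next, assume without loss of generality $\varepsilon_m>\varepsilon'_m$. Then $\varepsilon_m-\varepsilon'_m=T'-T$ is a positive integer, while $T'-T\le a_m-(a_m-1)=1$; hence $\varepsilon_m-\varepsilon'_m=1$, $T'=a_m$, and $T=a_m-1$, i.e.\ $T$ attains its minimum and $T'$ its maximum. It remains to see that each of these extreme values is attained by a single digit string. For this I would use the one-step recursion $T=-\frac{\varepsilon_{m+1}+\widetilde T}{d_{m+1}}$, where $\widetilde T$ is the tail attached to position $m+1$ and, again by Lemma~2, satisfies $\widetilde T\in[a_{m+1}-1,a_{m+1}]$, together with the identity $a_m=\frac{1-a_{m+1}}{d_{m+1}}$ (obtained by splitting off the first term). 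Since $a_m-1=-\frac{(d_{m+1}-1)+a_{m+1}}{d_{m+1}}$, the equality $T=a_m-1$ forces $\varepsilon_{m+1}=d_{m+1}-1$ and $\widetilde T=a_{m+1}$; iterating — with the roles of ``minimum'' and ``maximum'' interchanging at each step — gives $\varepsilon_{m+2i-1}=d_{m+2i-1}-1$ and $\varepsilon_{m+2i}=0$ for every $i\in\mathbb N$. Symmetrically, $T'=a_m$ forces $\varepsilon'_{m+2i-1}=0$ and $\varepsilon'_{m+2i}=d_{m+2i}-1$. Together with $\varepsilon'_m=\varepsilon_m-1$ this is exactly the first of the two systems; the case $\varepsilon'_m>\varepsilon_m$ produces the second.

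For the converse I would substitute either system into $x-x'=\sum_{n\ge m}\frac{(-1)^n(\varepsilon_n-\varepsilon'_n)}{d_1\cdots d_n}$ and telescope: using
\[
\sum_{j\ge1}\frac{d_{m+j}-1}{d_{m+1}\cdots d_{m+j}}=\sum_{j\ge1}\left(\frac{1}{d_{m+1}\cdots d_{m+j-1}}-\frac{1}{d_{m+1}\cdots d_{m+j}}\right)=1,
\]
one finds that the common factor $1+\sum_{k\ge1}\frac{(-1)^k(\varepsilon_{m+k}-\varepsilon'_{m+k})}{d_{m+1}\cdots d_{m+k}}$ vanishes, so $x=x'$. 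The only genuinely delicate point is the rigidity statement of the previous paragraph — that the extreme value of a Cantor-type tail is realised by one and only one sequence of admissible digits — and it is precisely there that the sharp bounds of Lemma~2 and the recursion are needed; the rest is bookkeeping and telescoping.
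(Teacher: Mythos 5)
Your proposal is correct and follows essentially the same route as the paper: reduce $x=x'$ to the statement that the tail series starting at position $m$ differ by exactly $1$, note that each tail is confined to an interval of length $1$ (so $|\varepsilon_m-\varepsilon'_m|=1$ and both tails must sit at opposite endpoints), and characterize the unique digit strings attaining those endpoints. You merely fill in details the paper leaves implicit — the recursion $T=-(\varepsilon_{m+1}+\widetilde T)/d_{m+1}$ establishing rigidity of the extremal sequences, and the telescoping verification of sufficiency, which the paper dismisses as obvious.
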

\begin{proof}
{\itshape Necessity.} Let $\varepsilon_m= \varepsilon^{'} _m +1 $. Then
$$
0=x-x^{'}=\Delta^{-D} _{\varepsilon_1\varepsilon_2...\varepsilon_{m-1}\varepsilon_m \varepsilon_{m+1}...}-\Delta^{-D} _{\varepsilon_1\varepsilon_2...\varepsilon_{m-1}\varepsilon^{'} _m\varepsilon^{'} _{m+1}...}=\frac{(-1)^m}{d_1d_2...d_m}+
$$
$$
+\frac{(-1)^{m+1}(\varepsilon_{m+1}-\varepsilon^{'} _{m+1})}{d_1d_2...d_{m+1}}+...+\frac{\varepsilon_{m+i}-\varepsilon^{'} _{m+i}}{d_1d_2...d_{m+i}}(-1)^{m+i}+...=
$$
$$
=\frac{(-1)^m}{d_1d_2...d_m}\left(1+\sum^{\infty} _{i=1} {\frac{(-1)^i (\varepsilon_{m+i}-\varepsilon^{'} _{m+i})}{d_{m+1}d_{m+2}...d_{m+i}}}\right).
$$
$$
v \equiv \sum^{\infty} _{i=1} {\frac{(-1)^i(\varepsilon_{m+i}-\varepsilon^{'} _{m+i})}{d_{m+1}d_{m+2}...d_{m+i}}} \ge - \sum^{\infty} _{i=1}{\frac{d_{m+i}-1}{d_{m+1}d_{m+2}...d_{m+i}}}=-1.
$$
The last-mentioned inequality is an equality in the following case 
$$
\varepsilon_{m+2i}=\varepsilon^{'} _{m+2i-1}=0~\mbox{ and} ~\varepsilon_{m+2i-1}=d_{m+2i-1}-1, ~\varepsilon^{'} _{m+2i}=d_{m+2i}-1.
$$

That is   conditions of the first system follow from $x=x^{'}$ in the case. It is easy to see that  second system  conditions follow from   $x=x^{'}$ on the assumption of $\varepsilon^{'} _m=\varepsilon_m+1$.  

It is obvious that {\itshape sufficiency} is true. 
\end{proof}

\begin{definition} The nega-D-representation  $\Delta^{-D} _{\varepsilon_1\varepsilon_2...\varepsilon_n...}$ of  the  number $x$ from $[a_0-1;a_0]$ is called  {\itshape periodic} if there exist numbers  $m \in \mathbb Z_0$ and $t \in~\mathbb N$ such that the  equality  $\varepsilon_{m+nt+j}=\varepsilon_{m+j}$ is true for arbitrary $n \in~\mathbb N$, $j\in \mathbb N$.The number is  denoted  by 
$$
\Delta^{-D} _{\varepsilon_1\varepsilon_2...\varepsilon_m(\varepsilon_{m+1}\varepsilon_{m+2}...\varepsilon_{m+t})}.
$$
The tuple of digits $(\varepsilon_{m+1}\varepsilon_{m+2}...\varepsilon_{m+t})$ of the last-mentioned  representation is called {\itshape period} and the number $t$ is {\itshape a length of the period}. The representation is  {\itshape a purely periodic}, if  $m=0$, and the representation is  {\itshape a mixed periodic}, if $m>0$.
\end{definition}

\begin{definition} The nega-D-representation  $\Delta^{-D} _{\varepsilon_1\varepsilon_2...\varepsilon_n...}$ of $x$ is  {\itshape a quasiperiodic}, if there exist numbers $m \in \mathbb Z_0$, $t \in \mathbb N$ and  functions $\phi_1, \phi_2,..., \phi_t$ (mapping sets $A_{d_n}$ in $A_{d_n}$ for each $n \in N$  ) such that
$$
x=\Delta^{-D} _{\varepsilon_1\varepsilon_2...\varepsilon_m\phi_1(d_{m+1})\phi_2(d_{m+2})...\phi_t(d_{m+t})\phi_1(d_{m+t+1})\phi_2(d_{m+t+2})...\phi_t(d_{m+2t})...}.
$$
\end{definition}

The following numbers are quasiperiodic:
$$
x_1=\Delta^{-D} _{\varepsilon_1\varepsilon_2...\varepsilon_n0[d_{n+2}-1][d_{n+3}-1]...[d_{n+i}-1]...}, 
$$
$$
 x_2=\Delta^{-D} _{\varepsilon_1\varepsilon_2...\varepsilon_n0[d_{n+1}-1]0[d_{n+3}-1]0[d_{n+5}-1]...}, ~\mbox{etc.}
$$
\begin{definition} A number $x \in I_0=[a_0-1;a_0]$ is called   {\itshape nega-D-rational number}, if it can be represented by 
$$
\Delta^{-D} _{\varepsilon_1\varepsilon_2...\varepsilon_{n-1}\varepsilon_n[d_{n+1}-1]0[d_{n+3}-1]0[d_{n+5}-1]...}
$$
or
$$
\Delta^{-D} _{\varepsilon_1\varepsilon_2...\varepsilon_{n-1}[\varepsilon_n-1]0[d_{n+2}-1]0[d_{n+4}-1]0[d_{n+6}-1...}.
$$
The rest of the numbers from $I_0$ are  {\itshape nega-D-irrational numbers}.
\end{definition}

The next proposition follows from Lemma   \ref{nega-cs-lem1} and Lemma  \ref{nega-cs-lem2}. 

\begin{theorem}
Every nega-D-irrational number has unique representation by the alternating Cantor series. Every nega-D-rational number has two  representations by the series \eqref{nega-cs-def1} such that for one from the representations  conditions $\varepsilon_{m+2i-1}=d_{m+2i-1}-1$ and $\varepsilon_{m+2i}=0$ are true and for the other representation for fixed   $m \in \mathbb Z_0$ and for any  $i \in \mathbb N$  conditions   $\varepsilon_{m+2i-1}=0$, $\varepsilon_{m+2i}=d_{m+2i}-1$ are true.
\end{theorem}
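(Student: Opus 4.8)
The plan is to reduce the theorem entirely to Lemma~\ref{nega-cs-lem1}, which supplies at least one nega-D-representation for every $x\in I_0=[a_0-1;a_0]$, and to Lemma~\ref{nega-cs-lem2}, which says exactly when two representations of the same number coincide. Existence is then immediate, and everything comes down to counting, for each $x\in I_0$, the digit sequences $(\varepsilon_n)$ with $x=\sum_{n\ge1}(-1)^n\varepsilon_n/(d_1d_2\cdots d_n)$.

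First I would settle uniqueness in the irrational case. Assume $x$ has two distinct representations and let $m$ be the first index at which their digits $\varepsilon_m,\varepsilon^{'}_m$ differ. Lemma~\ref{nega-cs-lem2} forces one of its two displayed systems to hold for all $i\in\mathbb N$; in both systems $|\varepsilon_m-\varepsilon^{'}_m|=1$ and, once the common prefix $\varepsilon_1\dots\varepsilon_m$ is fixed, every digit from position $m$ onward in each representation is determined, one tail having the shape $[d_{m+1}-1]\,0\,[d_{m+3}-1]\,0\,\dots$ and the other $0\,[d_{m+2}-1]\,0\,[d_{m+4}-1]\,0\,\dots$. Comparing with the definition, this says $x$ is nega-D-rational; contrapositively, a nega-D-irrational number has a unique representation. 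The same two systems are also, verbatim, the digit conditions claimed in the theorem for the two representations of a nega-D-rational number. Conversely, if $x$ is nega-D-rational then by definition it has a representation of the first listed form, and the \emph{sufficiency} part of Lemma~\ref{nega-cs-lem2} shows the two forms listed in the definition are equal; hence $x$ has at least two representations, of the two prescribed shapes.

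It then remains to prove that no $x\in I_0$ has three representations, and I expect this to be the only point requiring an argument beyond quoting the lemmas. I would argue by contradiction. Let $\alpha,\beta,\gamma$ be three pairwise distinct digit sequences representing the same $x$, let $m$ be the least index at which they are not all equal, so they agree on positions $1,\dots,m-1$, and say $\alpha_m\ne\beta_m$. By Lemma~\ref{nega-cs-lem2} applied to $\alpha,\beta$ one has $|\alpha_m-\beta_m|=1$; write $\alpha_m=\beta_m+1$, with $\alpha$ and $\beta$ carrying the two rigid tails from position $m$. If $\gamma_m=\alpha_m$, then $\gamma$ and $\beta$ first differ at $m$ with $\gamma_m=\beta_m+1$, and Lemma~\ref{nega-cs-lem2} forces the tail of $\gamma$ from position $m$ to equal that of $\alpha$, whence $\gamma=\alpha$ --- a contradiction; the case $\gamma_m=\beta_m$ is symmetric. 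If $\gamma_m\notin\{\alpha_m,\beta_m\}$, then $\gamma$ differs from each of $\alpha$ and $\beta$ already at position $m$, so Lemma~\ref{nega-cs-lem2} gives simultaneously $|\gamma_m-\alpha_m|=1$ and $|\gamma_m-\beta_m|=1$; as $\alpha_m$ and $\beta_m$ are consecutive integers, no integer distinct from both can lie within distance $1$ of both, again a contradiction. Hence every $x\in I_0$ has at most two representations, and the theorem follows. The main obstacle is precisely this last bookkeeping step: it is short, but it is the one place where one genuinely uses the integer spacing of the digits rather than merely the structural lemmas already proved.
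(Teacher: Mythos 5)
Your proposal is correct and follows the paper's route exactly: the paper gives no separate proof, stating only that the theorem follows from Lemma~\ref{nega-cs-lem1} (existence) and Lemma~\ref{nega-cs-lem2} (the two rigid digit systems characterizing coincidence). Your explicit argument that no number admits three representations --- using that Lemma~\ref{nega-cs-lem2} forces consecutive digits at the first disagreement and fully determines both tails --- is a correct and worthwhile filling-in of the one step the paper leaves entirely implicit.
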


\begin{remark} Using this theorem, one can  consider  the nega-D-representation of a number as function of this number.  The function is well defined for nega-D-irrational numbers. Suppose one of representations of nega-D-rational number is not used. For example, the representation with conditions $\varepsilon_{m+2i-1}=0$, $\varepsilon_{m+2i}=d_{m+2i}-1$, where  $m \in \mathbb N$ is a fixed number and  $i \in \mathbb N$ is an arbitrary number. Then this function is well defined for nega-D-rational numbers too.

\end{remark}

\begin{remark} There exist sequences $(d_n)$ such that irrational number is a nega-D-rational number in the alternating
Cantor series representation. For example,
\end{remark}
$$
x=\sum^n _{i=1} {\frac{(-1)^i\varepsilon_i}{d_1d_2...d_i}}+\frac{(-1)^n}{d_1d_2...d_n}\left(-1- \sum^{\infty} _{j=1} {\frac{(-1)^j}{2\cdot 3 \cdot ... \cdot (j+1)}}\right)=
$$
$$
=\sum^n _{i=1} {\frac{(-1)^i\varepsilon_i}{d_1d_2...d_i}}+\frac{(-1)^n}{d_1d_2...d_n}\left(-1+\frac{1}{e}\right);
$$
$$
x=\sum^n _{i=1} {\frac{(-1)^i\varepsilon_i}{d_1d_2...d_i}}+\frac{(-1)^n}{d_1d_2...d_n}\left(-1- \sum^{\infty} _{j=1} {\frac{(-1)^j}{2\cdot 4 \cdot ... \cdot 2j}}\right)=
$$
$$
=\sum^n _{i=1} {\frac{(-1)^i\varepsilon_i}{d_1d_2...d_i}}+\frac{(-1)^{n+1}}{d_1d_2...d_n}\cdot \frac{\sqrt e}{e},
$$
because
$$
\Delta^{-D} _{\varepsilon_1...\varepsilon_n[d_{n+1}-1]0[d_{n+3}-1]0...}\equiv g_n+\frac{(-1)^n}{d_1d_2...d_n}\left(-1-\sum^{\infty} _{j=1} {\frac{(-1)^j}{d_{n+1}...d_{n+j}}}\right),
$$
$$
\Delta^{-D} _{\varepsilon_1...\varepsilon_{n-1}[\varepsilon_n-1]0[d_{n+2}-1]0...}\equiv g_n+\frac{(-1)^{n+1}}{d_1d_2...d_n}\left(1+\sum^{\infty} _{j=1} {\frac{(-1)^j}{d_{n+1}...d_{n+j}}}\right), 
$$
where
$$
g_n=\sum^n _{i=1} {\frac{(-1)^i\varepsilon_i}{d_1d_2...d_i}}.
$$

To avoid some inconveniences in the sequel one can modify notation (2) of the alternating Cantor series to 
\begin{equation}
\label{nega-cs-def2}
\sum^{\infty} _{n=1} {\frac{1+\varepsilon_n}{d_1d_2...d_n}(-1)^{n+1}},
\end{equation}
where $\varepsilon_n \in A_{d_n}$,  such that 
$$
[-1+a_0;a_0] \to [0;1], ~\mbox{where} ~ a_0=-\Delta^{-D} _{(1)}.
$$

It is easy to see that
$$
\inf\left(\sum^{\infty} _{n=1} {\frac{(-1)^{n+1}}{d_1d_2...d_n}}+\sum^{\infty} _{n=1} {\frac{(-1)^{n+1}\varepsilon_n}{d_1d_2...d_n}}\right)=g^{'}-\sum^{\infty} _{i=1} {\frac{d_{2i}-1}{d_1d_2...d_{2i}}}=0,
$$
$$
\sup\left(\sum^{\infty} _{n=1} {\frac{(-1)^{n+1}}{d_1d_2...d_n}}+\sum^{\infty} _{n=1} {\frac{(-1)^{n+1}\varepsilon_n}{d_1d_2...d_n}}\right)=g^{'} +\sum^{\infty} _{i=1} {\frac{d_{2i-1}-1}{d_1d_2...d_{2i-1}}}=1,
$$
where
$$
g^{'}=\sum^{\infty} _{n=1} {\frac{(-1)^{n+1}}{d_1d_2...d_n}}.
$$
The fact of representation of $x \in [0;1]$ by \eqref{nega-cs-def2} is denoted by $\Delta^{-(d_n)} _{\varepsilon_1\varepsilon_2...\varepsilon_n...}$. The last-mentioned notation is called  \ \ {\itshape nega-$d_n$-representation of  \  \  \  the number    $x \in [0;1]$}. Number $d_n$ is  called {\itshape $n$th element } and $\varepsilon_n=\varepsilon_n(x)$ is  {\itshape $n$th digit} of the sum     \eqref{nega-cs-def2}.

\section{ Some properties of representation of real numbers by the alternating Cantor series}

Let $x=\Delta^{-D} _{\varepsilon_1(x)\varepsilon_2(x)...\varepsilon_n(x)...}$ and $y=\Delta^{-D} _{\varepsilon_1(y)\varepsilon_2(y)...\varepsilon_n(y)...}$.
\begin{proposition}
\label{nega-cs-pro1}
For any  numbers $x$ and $y$ from $[-1+a_0;a_0]$ the inequality $x<y$ is true iff there exists $m$ such that   
$$
\varepsilon_n(x)=\varepsilon_n(y) ~\mbox{for} ~ n<2m ~\mbox{and}~ \varepsilon_{2m}(x)<\varepsilon_{2m}(y)
$$
or
$$
\varepsilon_n(x)=\varepsilon_n(y) ~\mbox{for} ~ n<2m-1 ~\mbox{and}~ \varepsilon_{2m-1}(x)>\varepsilon_{2m-1}(y).
$$
\end{proposition}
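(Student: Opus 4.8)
The plan is to reduce the whole statement to the sufficiency direction together with trichotomy, and to prove sufficiency by isolating the first coordinate where the two digit strings disagree and controlling the tail by the residual estimates from the second Lemma of Section~1.

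First I would prove sufficiency. Assume the first of the two displayed alternatives: $\varepsilon_n(x)=\varepsilon_n(y)$ for $n<2m$ and $\varepsilon_{2m}(x)<\varepsilon_{2m}(y)$. Writing $x$ and $y$ as their partial sums of order $2m$ plus the residuals $r_{2m}(x)$, $r_{2m}(y)$, all terms of index below $2m$ cancel, and since $(-1)^{2m}=1$,
$$
x-y=\frac{\varepsilon_{2m}(x)-\varepsilon_{2m}(y)}{d_1d_2\cdots d_{2m}}+\bigl(r_{2m}(x)-r_{2m}(y)\bigr).
$$
The first summand is at most $-\frac{1}{d_1\cdots d_{2m}}$, while the residual estimates for even index give $r_{2m}(x)\le\frac{a_{2m}}{d_1\cdots d_{2m}}$ and $r_{2m}(y)\ge\frac{a_{2m}-1}{d_1\cdots d_{2m}}$, so $r_{2m}(x)-r_{2m}(y)\le\frac{1}{d_1\cdots d_{2m}}$ and hence $x-y\le 0$. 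The computation for the second alternative is the same after replacing $2m$ by $2m-1$: now $(-1)^{2m-1}=-1$, so $\varepsilon_{2m-1}(x)>\varepsilon_{2m-1}(y)$ makes the leading term $\le-\frac{1}{d_1\cdots d_{2m-1}}$, and the residual estimates for odd index give $r_{2m-1}(x)-r_{2m-1}(y)\le\frac{1-a_{2m-1}}{d_1\cdots d_{2m-1}}+\frac{a_{2m-1}}{d_1\cdots d_{2m-1}}=\frac{1}{d_1\cdots d_{2m-1}}$, so again $x-y\le 0$.

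The one point that needs care — and the only place I expect real friction — is upgrading $x-y\le 0$ to the strict inequality $x<y$. Equality in the displays above can hold only when the leading digit difference equals $\pm 1$ and the two residuals simultaneously attain their extreme values; inspecting when the bounds of the residual Lemma are sharp shows that this forces exactly the quasiperiodic tails appearing in Lemma~\ref{nega-cs-lem2}, i.e. it forces $x$ and $y$ to be the two distinct representations of one and the same nega-D-rational number. Since $\varepsilon_n(\cdot)$ is understood in the sense of the preceding Remark (a single fixed representation is attached to each number, the other being discarded for nega-D-rational numbers), two distinct digit strings $(\varepsilon_n(x))$ and $(\varepsilon_n(y))$ cannot represent the same number, so this degenerate case is excluded and $x-y<0$.

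Finally, necessity follows formally. Given $x<y$, the strings $(\varepsilon_n(x))$ and $(\varepsilon_n(y))$ differ (otherwise $x=y$), so let $k$ be the least index with $\varepsilon_k(x)\ne\varepsilon_k(y)$. If $k=2m$ is even and $\varepsilon_{2m}(x)>\varepsilon_{2m}(y)$, then sufficiency with the roles of $x$ and $y$ interchanged yields $y<x$, a contradiction; hence $\varepsilon_{2m}(x)<\varepsilon_{2m}(y)$, which is the first alternative. If $k=2m-1$ is odd and $\varepsilon_{2m-1}(x)<\varepsilon_{2m-1}(y)$, then sufficiency (again interchanging $x$ and $y$, second alternative) yields $y<x$, a contradiction; hence $\varepsilon_{2m-1}(x)>\varepsilon_{2m-1}(y)$, the second alternative. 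This exhausts the cases and proves the proposition.
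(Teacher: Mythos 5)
Your proof is correct. Note that the paper states this proposition without any proof at all, so there is nothing to compare against; your argument, which isolates the first index of disagreement and controls the tails via the two-sided residual bounds of the unlabelled lemma in Section~1 (namely $\frac{a_n-1}{d_1\cdots d_n}\le r_n\le\frac{a_n}{d_1\cdots d_n}$ for even $n$ and $-\frac{a_n}{d_1\cdots d_n}\le r_n\le\frac{1-a_n}{d_1\cdots d_n}$ for odd $n$), is the natural one and supplies exactly the missing content. The one genuinely delicate point is the upgrade from $x-y\le 0$ to $x-y<0$, and you identify it correctly: equality forces the leading digit gap to be $1$ and both tails to be extremal, which is precisely the two-representation situation of Lemma~\ref{nega-cs-lem2}, excluded once $\varepsilon_n(\cdot)$ is read as the single representation fixed by the Remark after Theorem~1. (Without that convention the ``only if'' direction of the proposition would literally fail at nega-D-rational points, so making the convention explicit is not pedantry but a necessary repair of the statement.) The deduction of necessity from sufficiency plus trichotomy is also fine, and in fact only needs the weak inequality $y\le x$ to produce the contradiction with $x<y$.
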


\begin{proposition}
Let \  $x_1=\Delta^{-D} _{\varepsilon_1\varepsilon_2...\varepsilon_{2k-1}(0)}$, \  $x_2=\Delta^{-D} _{\varepsilon_1\varepsilon_2...\varepsilon_{2k}(0)}$, \  $x_3=\Delta^{-D} _{\varepsilon_1\varepsilon_2...\varepsilon_{2k+1}(0)}$ and $\varepsilon_i \ne 0$ for  all $i=\overline{1,2k+1}$. Then the following two-sided inequality is true: 
$$
x_1<x_3<x_2.
$$
\end{proposition}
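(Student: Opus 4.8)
The plan is to exploit the fact that each of $x_1,x_2,x_3$ is given by a nega-D-representation ending in the period $(0)$, hence is a \emph{finite} sum. Writing $g_p=\sum_{n=1}^{p}\frac{(-1)^n\varepsilon_n}{d_1d_2\cdots d_n}$, the definitions unfold to $x_1=g_{2k-1}$, $x_2=g_{2k}$ and $x_3=g_{2k+1}$, because every digit with index exceeding, respectively, $2k-1$, $2k$, $2k+1$ equals $0$. Thus the whole statement is an elementary computation with three partial sums that differ only in the sign pattern $(-1)^n$.

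First I would compare $x_1$ with $x_2$: since $2k$ is even,
$$
x_2-x_1=\frac{(-1)^{2k}\varepsilon_{2k}}{d_1d_2\cdots d_{2k}}=\frac{\varepsilon_{2k}}{d_1d_2\cdots d_{2k}}>0,
$$
where strictness uses the hypothesis $\varepsilon_{2k}\neq 0$, i.e. $\varepsilon_{2k}\ge 1$. Likewise, since $2k+1$ is odd,
$$
x_3-x_2=\frac{(-1)^{2k+1}\varepsilon_{2k+1}}{d_1d_2\cdots d_{2k+1}}=-\frac{\varepsilon_{2k+1}}{d_1d_2\cdots d_{2k+1}}<0,
$$
using $\varepsilon_{2k+1}\ge 1$. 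This already yields $x_1<x_2$ and $x_3<x_2$. The one inequality requiring a touch more care is $x_1<x_3$: here
$$
x_3-x_1=\frac{\varepsilon_{2k}}{d_1\cdots d_{2k}}-\frac{\varepsilon_{2k+1}}{d_1\cdots d_{2k+1}}=\frac{1}{d_1\cdots d_{2k}}\left(\varepsilon_{2k}-\frac{\varepsilon_{2k+1}}{d_{2k+1}}\right),
$$
and the bracket is positive because $\varepsilon_{2k}\ge 1$ while $\frac{\varepsilon_{2k+1}}{d_{2k+1}}\le\frac{d_{2k+1}-1}{d_{2k+1}}<1$, the bound $\varepsilon_{2k+1}\le d_{2k+1}-1$ being just $\varepsilon_{2k+1}\in A_{d_{2k+1}}$. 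Combining the three comparisons gives $x_1<x_3<x_2$.

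A shorter alternative route, which I would mention, is to apply Proposition~\ref{nega-cs-pro1} directly: $x_1$ and $x_3$ share the first $2k-1$ digits and $\varepsilon_{2k}(x_1)=0<\varepsilon_{2k}=\varepsilon_{2k}(x_3)$, so the first alternative of that proposition (with $m=k$) gives $x_1<x_3$; and $x_3$ and $x_2$ share the first $2k$ digits while $\varepsilon_{2k+1}(x_3)=\varepsilon_{2k+1}>0=\varepsilon_{2k+1}(x_2)$, so the second alternative (with $m=k+1$) gives $x_3<x_2$. There is no genuine obstacle in either version; the only point to watch is that the strict inequalities are not lost, i.e. that $\varepsilon_{2k},\varepsilon_{2k+1}$ are truly nonzero and that $\varepsilon_{2k+1}/d_{2k+1}<1$ strictly, and both are guaranteed by the hypotheses $\varepsilon_i\neq 0$ and $\varepsilon_i\in A_{d_i}$.
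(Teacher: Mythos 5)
Your main computation is exactly the paper's proof: the author likewise establishes the identity $x_3=x_1+\frac{1}{d_1d_2\cdots d_{2k}}\bigl(\varepsilon_{2k}-\frac{\varepsilon_{2k+1}}{d_{2k+1}}\bigr)=x_2-\frac{\varepsilon_{2k+1}}{d_1d_2\cdots d_{2k+1}}$ and reads off both inequalities, and your justification of the strict positivity of the bracket via $\varepsilon_{2k}\ge 1$ and $\varepsilon_{2k+1}\le d_{2k+1}-1$ is the detail the paper leaves implicit. The alternative route through Proposition~1 is a valid remark but not what the paper does.
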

\begin{proof} The proposition follows from the equality
$$
x_3=x_1+\frac{1}{d_1d_2...d_{2k}}\left(\varepsilon_{2k}-\frac{\varepsilon_{2k+1}}{d_{2k+1}}\right)=x_2-\frac{\varepsilon_{2k+1}}{d_1d_2...d_{2k+1}}.
$$
\end{proof}

\begin{proposition}
Let \ $z_1=\Delta^{-D} _{\varepsilon_1\varepsilon_2...\varepsilon_{2k}(0)}$, \  $z_2=\Delta^{-D} _{\varepsilon_1\varepsilon_2...\varepsilon_{2k+1}(0)}$, \   $z_3=\Delta^{-D} _{\varepsilon_1\varepsilon_2...\varepsilon_{2k+2}(0)}$ and $\varepsilon_i \ne 0$ for all $i=\overline{1,2k+2}$. Then 
$$
z_2<z_3<z_1.
$$
\end{proposition}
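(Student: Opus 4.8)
The plan is to imitate the proof of the preceding proposition: write $z_1$, $z_2$, $z_3$ as finite alternating Cantor sums (all digits after the indicated position being $0$) and read off the two inequalities from elementary sign bookkeeping. Concretely, since the digits of $z_2$ coincide with those of $z_1$ up to index $2k$, the $(2k+1)$st digit of $z_2$ is $\varepsilon_{2k+1}$, and all later digits vanish, we have
$$
z_2=z_1+\frac{(-1)^{2k+1}\varepsilon_{2k+1}}{d_1d_2\cdots d_{2k+1}}=z_1-\frac{\varepsilon_{2k+1}}{d_1d_2\cdots d_{2k+1}},
$$
and likewise
$$
z_3=z_2+\frac{(-1)^{2k+2}\varepsilon_{2k+2}}{d_1d_2\cdots d_{2k+2}}=z_2+\frac{\varepsilon_{2k+2}}{d_1d_2\cdots d_{2k+2}}.
$$

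First I would deduce $z_2<z_3$: the hypothesis $\varepsilon_{2k+2}\ne 0$ gives $\varepsilon_{2k+2}\ge 1$, so the term added to $z_2$ in the second identity is strictly positive. Next, to obtain $z_3<z_1$, I would combine the two identities into
$$
z_3=z_1-\frac{1}{d_1d_2\cdots d_{2k+1}}\left(\varepsilon_{2k+1}-\frac{\varepsilon_{2k+2}}{d_{2k+2}}\right),
$$
and observe that the bracketed quantity is strictly positive: indeed $\varepsilon_{2k+1}\ge 1$ because $\varepsilon_{2k+1}\ne 0$, while $0\le \varepsilon_{2k+2}\le d_{2k+2}-1$ forces $0\le \varepsilon_{2k+2}/d_{2k+2}<1$. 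Hence $z_3<z_1$, and the two-sided inequality $z_2<z_3<z_1$ follows.

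Alternatively, one can avoid all computation and simply invoke Proposition~\ref{nega-cs-pro1}: the nega-D-representations of $z_2$ and $z_3$ agree in all positions $n<2k+2$ and satisfy $\varepsilon_{2k+2}(z_2)=0<\varepsilon_{2k+2}=\varepsilon_{2k+2}(z_3)$, which yields $z_2<z_3$; and the representations of $z_3$ and $z_1$ agree in all positions $n<2k+1$ while $\varepsilon_{2k+1}(z_3)=\varepsilon_{2k+1}>0=\varepsilon_{2k+1}(z_1)$, which yields $z_3<z_1$.

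There is essentially no hard step here; the only point requiring care is the sign pattern $(-1)^n$ in the defining series, which is what distinguishes this statement from the previous one (there the middle index was odd, here it is even, so the roles of the two alternating partial sums are interchanged). One should also be sure to use the bound $\varepsilon_{2k+2}\le d_{2k+2}-1$ — not merely $\varepsilon_{2k+2}\ge 0$ — so that the inequality $z_3<z_1$ comes out strict.
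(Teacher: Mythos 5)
Your proof is correct and follows essentially the same route as the paper: the paper's entire argument is the single identity $z_3=z_1-\frac{1}{d_1d_2\cdots d_{2k+1}}\left(\varepsilon_{2k+1}-\frac{\varepsilon_{2k+2}}{d_{2k+2}}\right)=z_2+\frac{\varepsilon_{2k+2}}{d_1d_2\cdots d_{2k+2}}$, which you derive and then justify in slightly more detail (the positivity of the bracketed term and of the added fraction). Your alternative via Proposition~\ref{nega-cs-pro1} is a valid shortcut but not the one the paper uses.
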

\begin{proof} It is obvious that the proposition is true because
$$
z_3=z_1-\frac{1}{d_1d_2...d_{2k+1}}\left(\varepsilon_{2k+1}-\frac{\varepsilon_{2k+2}}{d_{2k+2}}\right)=z_2+\frac{\varepsilon_{2k+2}}{d_1d_2...d_{2k+2}}.
$$
\end{proof}

\section{Relation between representations of real numbers  by the positive  Cantor series  and the alternating   Cantor series }

Let $(d_n)$ be a fixed sequence of positive integer numbers such that  $d_n \ge1$.  For any  $ x \in [0;1]$ there exists sequence $(\alpha_n)$: $\alpha_n \in A_{d_n}$ such that
$$
x=\sum^{\infty} _{n=1} {\frac{\alpha_n}{d_1d_2...d_n}}\equiv \Delta^{D} _{\alpha_1\alpha_2...\alpha_n...}.
$$

It is obvious that 
$$
x=\frac{\alpha_1d_2+\alpha_2}{d_1d_2}+\frac{\alpha_3d_4+\alpha_4}{d_1d_2d_3d_4}+...+\frac{\alpha_{2n-1}d_{2n}+\alpha_{2n}}{d_1d_2...d_{2n}}+...,
$$
but this representation is a representation of  $x$ by the positive Cantor series with sequence elements  $(d^{'} _n)$, where $d^{'} _n=d_{2n-1}d_{2n}$.  

In fact, $0 \le \alpha_{2n-1}d_{2n}+\alpha_{2n}\le d_{2n-1}d_{2n}-1$ and therefore
\begin{equation}
\label{zv1}
x=\sum^{\infty} _{n=1} {\frac{\beta_n}{p_1p_2...p_n}}\equiv \Delta^{D^{'} _1}  _{\beta_1\beta_2...\beta_n...},
\end{equation}
where $\beta_n=\alpha_{2n-1}d_{2n}+\alpha_{2n}$, $p_n=d_{2n-1}d_{2n}$ for any $n \in \mathbb N$.

Now let us consider representation \eqref{nega-cs-def1}. Using the same technique, it is obtained that

$$
x=\frac{\varepsilon_2-\varepsilon_1d_2}{d_1d_2}+\frac{\varepsilon_4-\varepsilon_3d_4}{d_1d_2d_3d_4}+...+\frac{\varepsilon_{2n}-\varepsilon_{2n-1}d_{2n}}{d_1d_2...d_{2n}}+....
$$

But $(\varepsilon_{2n}-\varepsilon_{2n-1}d_{2n})$ belongs to   $\{0,1,...,d_{2n-1}d_{2n}-~1\}$ for not all  values  of $\varepsilon_{2n-1}$ and $\varepsilon_{2n}$. Thus  the nega-$(d_n)$-representation of the number~$x$  can be used in this case. 

Indeed, for 
$$
x=\sum^{\infty} _{n=1} {\frac{1+\delta_n}{d_1d_2...d_n}(-1)^{n+1}}\equiv \Delta^{-(d_n)} _{\delta_1\delta_2...\delta_n...},
$$
where
$$
\sum^{\infty} _{n=1} {\frac{(-1)^{n+1}}{d_1d_2...d_n}} \equiv \Delta^{-D} _{0[d_2-1]0[d_4-1]0...},
$$
it is obtained  that
$$
x=\sum^{\infty} _{n=1} {\frac{d_{2n}-1}{d_1d_2...d_{2n}}}+\frac{\delta_1d_2-\delta_2}{d_1d_2}+\frac{\delta_3d_4-\delta_4}{d_1d_2d_3d_4}+...+\frac{\delta_{2n-1}d_{2n}-\delta_{2n}}{d_1d_2...d_{2n}}+....
$$
Thus,  the  number  $(\delta_{2n-1}d_{2n}-\delta_{2n}+~d_{2n}-~1)$ belongs to  $\{0,1,...,d_{2n-1}d_{2n}-~1\}$ for real numbers nega-$(d_n)$-representation always and
\begin{equation}
\label{zv2}
x=\sum^{\infty} _{n=1} {\frac{(\delta_{2n-1}+1)d_{2n}-\delta_{2n}-1}{d_1d_2...d_{2n}}}\equiv \Delta^{D^{'} _1} _{\gamma_1\gamma_2...\gamma_n...},  
\end{equation}
where $\gamma_n=(\delta_{2n-1}+1)d_{2n}-\delta_{2n}-1=\delta_{2n-1}d_{2n}+d_{2n}-1-\delta_{2n}$.

The next proposition follows from  \eqref{zv1} and \eqref{zv2}. 

\begin{lemma}
The following functions are identity transformations of  $[0;1]$:
\end{lemma}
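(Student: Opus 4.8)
The plan is to realise each of the listed maps as a composition of a \emph{decoding} map — sending $x\in[0;1]$ to one of its digit sequences in a numeral system introduced above — with an \emph{encoding} map that reads a suitably regrouped digit sequence as the sum of a series, and then to observe that along the way the real number being represented never changes, so the composition is forced to be $x\mapsto x$. Concretely, I would fix $x\in[0;1]$ and one of its representations in the source system: the positive Cantor series $\Delta^{D}_{\alpha_1\alpha_2\dots}$ with elements $(d_n)$, the nega-$(d_n)$-representation $\Delta^{-(d_n)}_{\delta_1\delta_2\dots}$, or the positive Cantor series $\Delta^{D^{'}_1}_{\beta_1\beta_2\dots}$ (resp. $\Delta^{D^{'}_1}_{\gamma_1\gamma_2\dots}$) with elements $p_n=d_{2n-1}d_{2n}$. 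Identities \eqref{zv1} and \eqref{zv2} already record exactly how digits transform: $\beta_n=\alpha_{2n-1}d_{2n}+\alpha_{2n}$ in one direction, $\gamma_n=(\delta_{2n-1}+1)d_{2n}-\delta_{2n}-1$ in the other, with Euclidean-division inverses $\alpha_{2n-1}=\lfloor\beta_n/d_{2n}\rfloor$, $\alpha_{2n}=\beta_n-\alpha_{2n-1}d_{2n}$, and $\delta_{2n-1}=\lfloor\gamma_n/d_{2n}\rfloor$, $\delta_{2n}=d_{2n}-1-(\gamma_n-\delta_{2n-1}d_{2n})$. Composing the positive and nega dictionaries through the common $\Delta^{D^{'}_1}$-expansion ($\beta_n\leftrightarrow\gamma_n$) likewise yields a direct correspondence between $\Delta^{D}$- and $\Delta^{-(d_n)}$-digits, which is handled by the same argument.

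Second, I would check admissibility, i.e. that the regrouped sequences are legal digit strings for their target systems. For \eqref{zv1} this is the inequality already noted in the text, $0\le\alpha_{2n-1}d_{2n}+\alpha_{2n}\le d_{2n-1}d_{2n}-1$, so $\beta_n\in A_{p_n}$; for \eqref{zv2} it is the observation that $\delta_{2n-1}d_{2n}+d_{2n}-1-\delta_{2n}\in\{0,1,\dots,d_{2n-1}d_{2n}-1\}$ for every admissible pair $(\delta_{2n-1},\delta_{2n})$, so $\gamma_n\in A_{p_n}$; conversely, Euclidean division of any $\beta_n\in A_{p_n}$ (resp. $\gamma_n\in A_{p_n}$) by $d_{2n}$ returns a pair in $A_{d_{2n-1}}\times A_{d_{2n}}$. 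Hence each map lands in $[0;1]$ and in fact outputs a genuine representation of $x$ in the target system, because grouping consecutive terms of an absolutely convergent series (legitimate by the convergence lemma of Section~1) leaves the sum unchanged.

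Third — the only point needing care — I would address well-definedness at the countably many points carrying two representations (the nega-$D$-rational numbers of Lemma~\ref{nega-cs-lem2} and their positive–Cantor–series analogues). Since the value of the series is visibly preserved by the regrouping, whichever representation of $x$ one starts from one obtains a representation of the \emph{same} $x$; therefore, as a function $[0;1]\to[0;1]$ the map is single-valued, independent of the choices, and equal to the identity. Assembling the three steps shows each listed composition sends $x$ to $x$, which is the assertion. I expect the main obstacle to be not any estimate but precisely this bookkeeping: phrasing the "regroup, then re-expand" procedure so that it is a well-defined map on $[0;1]$, and for that one leans on the rigidity results established earlier (Lemma~\ref{nega-cs-lem1}, Lemma~\ref{nega-cs-lem2}, Proposition~\ref{nega-cs-pro1}).
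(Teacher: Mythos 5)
Your proposal is correct and follows essentially the same route as the paper, which derives the lemma by passing both the $\Delta^{D}$- and $\Delta^{-(d_n)}$-representations through the common $\Delta^{D^{'}_1}$-expansion with elements $p_n=d_{2n-1}d_{2n}$ via \eqref{zv1} and \eqref{zv2} and matching $\beta_n=\gamma_n$. Your additional care about digit admissibility and well-definedness at doubly-represented points only makes explicit what the paper leaves implicit.
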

$$
x=\Delta^{D} _{\varepsilon_1\varepsilon_2...\varepsilon_n...}\stackrel{f}{\rightarrow}\Delta^{-(d_n)} _{\varepsilon_1[d_2-1-\varepsilon_2]...\varepsilon_{2n-1}[d_{2n}-1-\varepsilon_{2n}]...}=f(x)=y,
$$
$$
x=\Delta^{-(d_n)} _{\varepsilon_1\varepsilon_2...\varepsilon_n...}\stackrel{g}{\rightarrow}\Delta^{D} _{\varepsilon_1[d_2-1-\varepsilon_2]...\varepsilon_{2n-1}[d_{2n}-1-\varepsilon_{2n}]...}=g(x)=y.
$$
Therefore, the  following functions are DP-functions (functions preserving the fractal Hausdorff-Besicovitch dimension) on  $[0;1]$: 
$$
x=\Delta^{D} _{\varepsilon_1\varepsilon_2...\varepsilon_n...}\stackrel{f}{\rightarrow}\Delta^{-(d_n)} _{[d_1-1-\varepsilon_1]\varepsilon_2...[d_{2n-1}-1-\varepsilon_{2n-1}]\varepsilon_{2n}...}=f(x)=y,
$$
$$
x=\Delta^{-(d_n)} _{\varepsilon_1\varepsilon_2...\varepsilon_n...}\stackrel{g}{\rightarrow}\Delta^{D} _{[d_1-1-\varepsilon_1]\varepsilon_2...[d_{2n-1}-1-\varepsilon_{2n-1}]\varepsilon_{2n}...}=g(x)=y.
$$

\begin{lemma}
The following equalities are true: 
\begin{enumerate}
\item $\Delta^{D} _{\varepsilon_1\varepsilon_2...\varepsilon_n...}-\Delta^{-D} _{\varepsilon_1\varepsilon_2...\varepsilon_n...}\equiv 2\Delta^{D} _{\varepsilon_10\varepsilon_30...}\equiv-2\Delta^{-D} _{\varepsilon_10\varepsilon_30...};$

\item $\Delta^{D} _{\varepsilon_1\varepsilon_2...\varepsilon_n...}+\Delta^{-D} _{\varepsilon_1\varepsilon_2...\varepsilon_n...}\equiv 2\Delta^{D} _{0\varepsilon_20\varepsilon_4...} \equiv 2\Delta^{-D} _{0\varepsilon_20\varepsilon_4...};$

\item $\Delta^{D} _{\varepsilon_1\varepsilon_2...\varepsilon_n...}-\Delta^{-(d_n)} _{\varepsilon_1\varepsilon_2...\varepsilon_n...} \equiv 2\Delta^{D^{'}} _{\varepsilon_2\varepsilon_4\varepsilon_6...}-\Delta^{D^{'}} _{[d_2-1][d_4-1]...}$,  $\varepsilon_n \in A_{d_n}$;

\item $
\Delta^{D^{'}} _{\gamma_1\gamma_2...\gamma_n...} = \Delta^{D^{'}} _{\beta_1\beta_2...\beta_n...}+\Delta^{D^{'}} _{[d_2-1][d_4-1]...[d_{2n}-1]...}-2\Delta^{D^{'}} _{\varepsilon_2\varepsilon_4\varepsilon_6...}.$

\end{enumerate}
\end{lemma}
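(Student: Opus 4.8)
The plan is to read all four identities as equalities between (absolutely convergent) series over a common system of denominators — every series occurring converges absolutely, by the first Lemma for the $\Delta^{-D}$-series and by termwise comparison with $\sum_{n}(d_1d_2\cdots d_{n-1})^{-1}$ in the remaining cases — so that after combining the series termwise it only remains to simplify the numerators according to the parity of the index. I would handle (1) and (2) first. Fixing a digit string $(\varepsilon_n)$, termwise one has
$$\frac{\varepsilon_n}{d_1d_2\cdots d_n}-\frac{(-1)^n\varepsilon_n}{d_1d_2\cdots d_n}=\frac{\bigl(1-(-1)^n\bigr)\varepsilon_n}{d_1d_2\cdots d_n},$$
whose numerator is $2\varepsilon_n$ for odd $n$ and $0$ for even $n$; summing over $n$ gives $2\Delta^{D}_{\varepsilon_10\varepsilon_30\ldots}$. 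Since in $\Delta^{-D}_{\varepsilon_10\varepsilon_30\ldots}$ only odd indices survive and $(-1)^n=-1$ there, $\Delta^{-D}_{\varepsilon_10\varepsilon_30\ldots}=-\Delta^{D}_{\varepsilon_10\varepsilon_30\ldots}$, which is the second ``$\equiv$'' in (1). Identity (2) is the same computation with $1+(-1)^n$ replacing $1-(-1)^n$: the numerator vanishes for odd $n$ and equals $2\varepsilon_n$ for even $n$, and on an even-only digit string $(-1)^n=1$, so there $\Delta^{-D}$ and $\Delta^{D}$ coincide.

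For (3) I would again combine termwise:
$$\Delta^{D}_{\varepsilon_1\varepsilon_2\ldots}-\Delta^{-(d_n)}_{\varepsilon_1\varepsilon_2\ldots}=\sum_{n=1}^{\infty}\frac{\varepsilon_n-(1+\varepsilon_n)(-1)^{n+1}}{d_1d_2\cdots d_n}.$$
The numerator equals $-1$ for odd $n$ and $2\varepsilon_n+1$ for even $n$, so the sum splits as $-\sum_{m\ge1}(d_1\cdots d_{2m-1})^{-1}+\sum_{m\ge1}(2\varepsilon_{2m}+1)(d_1\cdots d_{2m})^{-1}$. The key bookkeeping step is the reindexing $\frac{1}{d_1\cdots d_{2m-1}}=\frac{d_{2m}}{d_1\cdots d_{2m}}$, after which both sums live over the denominators $d_1\cdots d_{2m}=p_1\cdots p_m$ of a $\Delta^{D'}$-expansion and merge into $\sum_{m\ge1}\frac{2\varepsilon_{2m}-(d_{2m}-1)}{d_1\cdots d_{2m}}=2\Delta^{D'}_{\varepsilon_2\varepsilon_4\ldots}-\Delta^{D'}_{[d_2-1][d_4-1]\ldots}$. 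Here the hypothesis $\varepsilon_n\in A_{d_n}$ is exactly what is needed: it gives $\varepsilon_{2m}\le d_{2m}-1\le d_{2m-1}d_{2m}-1$, so both $\Delta^{D'}$ symbols are legitimate positive Cantor expansions with elements $p_m=d_{2m-1}d_{2m}$. This reindexing, together with keeping track of the signs coming from $(-1)^n$ versus $(-1)^{n+1}$, is the only delicate point — and it is only mildly delicate; I expect it to be the main obstacle.

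Finally, (4) follows from (3) by a change of coordinates. Applying \eqref{zv1} to the digit string $(\varepsilon_n)$ gives $\Delta^{D}_{\varepsilon_1\varepsilon_2\ldots}=\Delta^{D'}_{\beta_1\beta_2\ldots}$ with $\beta_n=\varepsilon_{2n-1}d_{2n}+\varepsilon_{2n}$, and \eqref{zv2} gives $\Delta^{-(d_n)}_{\varepsilon_1\varepsilon_2\ldots}=\Delta^{D'}_{\gamma_1\gamma_2\ldots}$ with $\gamma_n=(\varepsilon_{2n-1}+1)d_{2n}-\varepsilon_{2n}-1$; substituting both into (3) and transposing yields exactly $\Delta^{D'}_{\gamma_1\gamma_2\ldots}=\Delta^{D'}_{\beta_1\beta_2\ldots}+\Delta^{D'}_{[d_2-1][d_4-1]\ldots}-2\Delta^{D'}_{\varepsilon_2\varepsilon_4\ldots}$. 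Equivalently, one may verify (4) directly from the termwise digit identity $\gamma_n=\beta_n+(d_{2n}-1)-2\varepsilon_{2n}$, which is immediate from the displayed formulas for $\beta_n$ and $\gamma_n$, and then sum over $n$ against $(d_1d_2\cdots d_{2n})^{-1}$.
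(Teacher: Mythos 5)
Your proof is correct. The paper states this lemma without any proof, so there is nothing to compare against; your termwise combination of the series (splitting by parity of $n$, using $1\mp(-1)^n$, and for item 3 the reindexing $\tfrac{1}{d_1\cdots d_{2m-1}}=\tfrac{d_{2m}}{d_1\cdots d_{2m}}$ together with the digit bound $\varepsilon_{2m}\le d_{2m}-1\le d_{2m-1}d_{2m}-1$) is the evident intended argument, and item 4 is, as you note, just the digit identity $\gamma_n=\beta_n+(d_{2n}-1)-2\varepsilon_{2n}$ already implicit in the paper's derivation of \eqref{zv1} and \eqref{zv2}.
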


\section{Shift  operator}

Let $\mathcal F^{-D} _{[-1+a_0;a_0]}$ be a set of all nega-D-representations of real numbers  from  $[-1+~a_0;a_0]$.

  {\itshape A shift  operator $\hat \varphi$ of the sum \eqref{nega-cs-def1}} on the set $\mathcal F^{-D} _{[-1+a_0;a_0]}$ is defined by 
$$
\hat \varphi \left(\sum^{\infty} _{n=1} {\frac{(-1)^n \varepsilon_n}{d_1d_2...d_n}}\right)= \sum^{\infty} _{n=2} {\frac{(-1)^{n-1}\varepsilon_{n}}{d_2d_3...d_{n}}},
$$
$$
\hat \varphi(\Delta^{-D} _{\varepsilon_1\varepsilon_2...\varepsilon_n...})=\Delta^{-D_1} _{\varepsilon_2\varepsilon_3...\varepsilon_n...}=-d_1\Delta^{-D} _{0\varepsilon_2...\varepsilon_n...}.
$$
The operator generates a function $\hat \varphi$, that  
$$
\hat \varphi: [-1+a_0;a_0]\to [-a_0d_1;1-a_0d_1].
$$

n-fold application of the shift operator $\hat \varphi$ leads to the operator  $\hat \varphi^n$, that
$$
\hat \varphi^n \left(\sum^{\infty} _{n=1} {\frac{(-1)^n \varepsilon_n}{d_1d_2...d_n}}\right)= \sum^{\infty} _{k=1} {\frac{(-1)^{k}\varepsilon_{n+k}}{d_{n+1}...d_{n+k}}},
$$
$$
\hat \varphi^n(\Delta^{-D} _{\varepsilon_1\varepsilon_2\varepsilon_3...\varepsilon_n...})=\Delta^{-D_n} _{\varepsilon_{n+1}\varepsilon_{n+2}...}=(-1)^nd_1d_2...d_n\Delta^{-D} _{\underbrace{0...0}_{\mbox{$n$ }}\varepsilon_{n+1}\varepsilon_{n+2}...}.
$$

A generalized shift operator  $\hat \varphi_m$  of the sum \eqref{nega-cs-def1}  is defined by the following way: 
$$
\hat \varphi_m \left(\sum^{\infty} _{n=1} {\frac{(-1)^n \varepsilon_n}{d_1d_2...d_n}}\right)=-\frac{\varepsilon_1}{d_1}+...+\frac{(-1)^{m-1}\varepsilon_{m-1}}{d_1d_2...d_{m-1}}+\frac{(-1)^m\varepsilon_{m+1}}{d_1d_2...d_{m-1}d_{m+1}}+...,
$$
$$
\hat \varphi_m(\Delta^{-D} _{\varepsilon_1\varepsilon_2...\varepsilon_{m-1}\varepsilon_m\varepsilon_{m+1}...})=\Delta^{-D_{\bar m}} _{\varepsilon_1\varepsilon_2...\varepsilon_{m-1}\varepsilon_{m+1}...}. 
$$

\begin{remark}
An application of the shift operators $\hat \varphi$ or $\hat \varphi_m$ leads to transition to "other"  numeral system because for 
arbitrary \  number \ $x \in [a_0-1;a_0]$  sequences  $(\varepsilon_n)$  and  $(d_n)$ are fixed in sum \eqref{nega-cs-def1}. 
\end{remark}

It is easy to see that the operator $\hat \varphi$ has exactly  $d_1$ invariant points, that 
$$
-\frac{i}{d_1+1}, ~i=\overline{0,d_1-1}.
$$

The \ operator $\hat \varphi$ \ is \  not \ bijection because \ the points \ $\Delta^{-D} _{\varepsilon_1\varepsilon_2...\varepsilon_n...}$ $(\varepsilon_1=\overline{0,d_1-1})$ are preimages of the point $\Delta^{-D_1} _{\varepsilon_2\varepsilon_3...\varepsilon_n...}$.
 
If a sequence $(d_n)$ is  purely periodic with period of length $k$, then the mapping $\hat \varphi$ has periodic points of period of length $k, k \in \mathbb N$, i. e.
$$
 \hat \varphi^{k+j}(\Delta^{-D} _{(\varepsilon_1\varepsilon_2...\varepsilon_k)})=\hat \varphi^{kt+j}(\Delta^{-D} _{(\varepsilon_1\varepsilon_2...\varepsilon_k)}),~~~t=0,1,2,....          
$$

\begin{lemma} The following set   is invariant under the mapping $\hat \varphi$, if a sequence $(d_n)$ is a purely periodic sequence with simple period:
$$
C[-D,V]=\{x: x=\Delta^{-D} _{\varepsilon_1\varepsilon_2...\varepsilon_n...}, \varepsilon_n \in \{v_1,v_2,...,v_m\}\subset \{0,1,..., d_n-1\}\},
$$
where $v_1,v_2,...,v_m$ are fixed positive integer numbers, $1 < m\le d_n-1$, $d_n>2$. 

The set
$$
C[-D,V_n]=\{x: x=\Delta^{-D} _{\varepsilon_1\varepsilon_2...\varepsilon_n...}, \varepsilon_n \in V_n=\{v^{(n)} _1,v^{(n)} _2,...,v^{(n)} _m\}\subset A_n\}
$$
is an  invariant by the mapping $\hat \varphi^k$, if  sequences $(d_n)$ and  $(V_n)$ are purely periodic with  period of  length $k$. 
\end{lemma}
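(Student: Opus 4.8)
The plan is to reduce both claims to a single structural fact about $\hat\varphi$ under periodicity: \emph{deleting the first $k$ digits of a nega-D-representation again yields a nega-D-representation governed by the same sequence $(d_n)$}. Indeed, from the definition of the iterated shift operator, $\hat\varphi^{k}(\Delta^{-D}_{\varepsilon_1\varepsilon_2\ldots})=\Delta^{-D_k}_{\varepsilon_{k+1}\varepsilon_{k+2}\ldots}$, where $D_k=(d_{k+1},d_{k+2},\ldots)$; if $(d_n)$ is purely periodic with period of length $k$, then $d_{k+n}=d_n$ for every $n$, hence $D_k=D$ as sequences. So the restriction of $\hat\varphi^{k}$ to $\mathcal F^{-D}_{[-1+a_0;a_0]}$ is exactly the left shift $(\varepsilon_n)_{n\ge 1}\mapsto(\varepsilon_{n+k})_{n\ge 1}$ on digit strings read with the \emph{unchanged} sequence $(d_n)$, and by the convergence lemma applied to the sequence $D_k=D$ every value $\hat\varphi^{k}(x)$ again belongs to $[-1+a_0;a_0]$, so the composition with membership in $C[-D,V]$ makes sense. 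For $k=1$ this is simply $\hat\varphi(\Delta^{-D}_{\varepsilon_1\varepsilon_2\ldots})=\Delta^{-D}_{\varepsilon_2\varepsilon_3\ldots}$.

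First I would settle the simple-period statement. Let $x\in C[-D,V]$ and fix a representation $x=\Delta^{-D}_{\varepsilon_1\varepsilon_2\ldots}$ with every digit in $V=\{v_1,\ldots,v_m\}$, which exists by the definition of $C[-D,V]$. Then $\hat\varphi(x)=\Delta^{-D}_{\varepsilon_2\varepsilon_3\ldots}$ has all its digits in $V$, so $\hat\varphi(x)\in C[-D,V]$; thus $\hat\varphi(C[-D,V])\subseteq C[-D,V]$. For the reverse inclusion (so that the set is genuinely invariant, not merely forward-invariant), given $y=\Delta^{-D}_{\eta_1\eta_2\ldots}\in C[-D,V]$ with all $\eta_n\in V$, the number $x:=\Delta^{-D}_{v_1\eta_1\eta_2\ldots}$ lies in $C[-D,V]$ and satisfies $\hat\varphi(x)=y$. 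Hence $\hat\varphi(C[-D,V])=C[-D,V]$.

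The second assertion is obtained by the identical argument with $\hat\varphi^{k}$ in place of $\hat\varphi$, using in addition the periodicity of $(V_n)$: if $x=\Delta^{-D}_{\varepsilon_1\varepsilon_2\ldots}$ with $\varepsilon_n\in V_n$ for all $n$, then $\hat\varphi^{k}(x)=\Delta^{-D}_{\varepsilon_{k+1}\varepsilon_{k+2}\ldots}$ with $\varepsilon_{k+n}\in V_{k+n}=V_n$, so $\hat\varphi^{k}(x)\in C[-D,V_n]$; and prepending a block $w_1w_2\cdots w_k$ with $w_j\in V_j$ gives the reverse inclusion.

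The point that needs genuine care---and where essentially all the subtlety sits---is that a nega-D-rational number carries two nega-D-representations (cf. Lemma~\ref{nega-cs-lem2} and the theorem following it), so ``delete the first $k$ digits'' is a priori multivalued on \emph{numbers}, and the two representations of such an $x$ may in fact have different images under $\hat\varphi$. This does not touch the lemma, because $C[-D,V]$ is defined through the \emph{existence} of a $V$-valued representation and $\hat\varphi$ is, by definition, an operator on representations ($\mathcal F^{-D}_{[-1+a_0;a_0]}$): the observation above shows that the $\hat\varphi$-image of a $V$-valued representation is again a $V$-valued representation, hence represents a point of $C[-D,V]$, which is exactly what invariance of the set requires. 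I would state this reading explicitly at the outset and then carry out the two short inclusions as above.
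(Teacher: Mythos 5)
The paper states this lemma without any proof, so there is nothing to compare against; your argument is correct and is surely the intended one: with a simple (length-one) period one has $D_1=D$, so $\hat\varphi$ acts on representations as the left shift of the digit string read against the unchanged sequence $(d_n)$, which preserves the condition $\varepsilon_n\in V$, and prepending a digit of $V$ (resp.\ a block $w_1\ldots w_k$ with $w_j\in V_j$) gives the reverse inclusion; the $\hat\varphi^k$ case is identical using $d_{n+k}=d_n$ and $V_{n+k}=V_n$. Your explicit remark that $\hat\varphi$ is an operator on representations, so the two-representation ambiguity at nega-D-rational points does not affect membership in $C[-D,V]$ (which is defined by existence of a $V$-valued representation), is a point the paper glosses over and is worth keeping.
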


{\itshape A shift  operator  $\tilde \varphi$ of representation by the  alternating   Cantor series } of  $x=\Delta^{-D} _{\varepsilon_1\varepsilon_2...\varepsilon_n...}$ is defined by
$$
\tilde \varphi(\Delta^{-D} _{\varepsilon_1\varepsilon_2...\varepsilon_n...})=\Delta^{-D} _{\varepsilon_2...\varepsilon_n...}\equiv \sum^{\infty} _{n=1} {\frac{(-1)^n\varepsilon_{n+1}}{d_1d_2...d_n}}.
$$

It is obvious that the mapping  $\tilde \varphi$  is not always  well defined. In fact, for not all alternating   Cantor series the inequality $\varepsilon_{n+1}\le d_n-1$ is true. The next proposition follows from the last-mentioned inequality.

\begin{lemma}
\label{lm8}
The operator $\tilde \varphi$ is well defined iff for a sequence $(d_n) $ of elements of the sum \eqref{nega-cs-def1} for arbitrary $n \in \mathbb N$ the following inequality is true:
$$
d_{n+1}\le d_n.
$$
\end{lemma}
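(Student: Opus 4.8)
The plan is to read the criterion straight off the definition of $\tilde\varphi$. By definition $\tilde\varphi\bigl(\Delta^{-D}_{\varepsilon_1\varepsilon_2\dots\varepsilon_n\dots}\bigr)=\Delta^{-D}_{\varepsilon_2\varepsilon_3\dots\varepsilon_n\dots}$, where the $n$-th digit of the image is $\varepsilon_{n+1}$ and it is again paired with the same element $d_n$. Hence $\tilde\varphi$ maps a nega-D-representation to a genuine nega-D-representation exactly when $\varepsilon_{n+1}\in A_{d_n}=\{0,1,\dots,d_n-1\}$ for every $n$; since a priori only $\varepsilon_{n+1}\in A_{d_{n+1}}$ is guaranteed, the whole question reduces to comparing the alphabets $A_{d_{n+1}}$ and $A_{d_n}$, i.e. the integers $d_{n+1}$ and $d_n$. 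This is the notion of ``well defined'' in force here: the image of every element of $\mathcal F^{-D}_{[-1+a_0;a_0]}$ is again an admissible nega-D-representation of a number of $[a_0-1;a_0]$ (the latter by Lemma~1).

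For \emph{sufficiency} I would assume $d_{n+1}\le d_n$ for all $n\in\mathbb N$. Then for an arbitrary $x=\Delta^{-D}_{\varepsilon_1\varepsilon_2\dots}$ and every $n$ one has $\varepsilon_{n+1}\le d_{n+1}-1\le d_n-1$, so $\varepsilon_{n+1}\in A_{d_n}$; thus $(\varepsilon_{n+1})_{n\ge1}$ is an admissible digit sequence, and by Lemma~1 the series $\sum_{n=1}^{\infty}\dfrac{(-1)^n\varepsilon_{n+1}}{d_1d_2\dots d_n}$ converges absolutely to a point of $[a_0-1;a_0]$. Hence $\tilde\varphi$ carries every nega-D-representation to a nega-D-representation and is well defined.

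For \emph{necessity} I would argue contrapositively. Suppose $d_{n_0+1}>d_{n_0}$ for some $n_0\in\mathbb N$, so $d_{n_0+1}-1\ge d_{n_0}>d_{n_0}-1$. By Lemma~\ref{nega-cs-lem1} every digit string $(\varepsilon_n)$ with $\varepsilon_n\in A_{d_n}$ is the nega-D-representation of some number of $[a_0-1;a_0]$; choose one with $\varepsilon_{n_0+1}=d_{n_0+1}-1$. Applying $\tilde\varphi$ would place the value $\varepsilon_{n_0+1}=d_{n_0+1}-1$ in the $n_0$-th position, where it exceeds $d_{n_0}-1$, so the resulting string is not an admissible nega-D-representation for $(d_n)$. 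Thus $\tilde\varphi$ is not well defined, which completes the equivalence.

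The argument is essentially formal, so there is no serious obstacle; the only point requiring a moment of care is to fix the meaning of ``well defined'' (the image of $\tilde\varphi$ must stay inside the set of admissible nega-D-representations), after which everything follows from the trivial equivalence $A_{d_{n+1}}\subseteq A_{d_n}\iff d_{n+1}\le d_n$ together with Lemma~1 and Lemma~\ref{nega-cs-lem1}.
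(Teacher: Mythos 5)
Your proof is correct and follows essentially the same route as the paper: the paper simply observes that $\tilde\varphi$ re-pairs the digit $\varepsilon_{n+1}$ with the element $d_n$, so well-definedness for all representations is exactly the requirement $\varepsilon_{n+1}\le d_n-1$, which holds for every admissible digit sequence iff $d_{n+1}\le d_n$. You merely spell out the two directions (and the interpretation of ``well defined'' matching the paper's subsequent remark) that the paper leaves implicit.
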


\begin{remark}
In Lemma \ref{lm8}, one can understand that operator is well defined in the wide sense, i. e., for arbitrary  $x \in [a_0-1;a_0]$. Really, for any sequence $(d_n) $ there exist points from $ [a_0-1;a_0]$ such that the function  $\tilde \varphi$ is well defined in these  points.
\end{remark}

\begin{lemma}
Let $x=\Delta^{-D} _{\varepsilon_1\varepsilon_2...\varepsilon_n...}$. If there exists  $m \in \mathbb N$, that  $\hat \varphi^m (x)=x$, then
$$
x=\left(1+\frac{1}{(-1)^md_1d_2...d_m-1}\right)\Delta^{-D} _{\varepsilon_1\varepsilon_2...\varepsilon_m(0)}.
$$
\end{lemma}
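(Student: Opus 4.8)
The plan is to convert the fixed-point condition $\hat\varphi^m(x)=x$ into a single linear equation for $x$ and solve it. The starting point is the explicit formula for the iterated shift operator recorded above,
$$
\hat\varphi^m(\Delta^{-D} _{\varepsilon_1\varepsilon_2\varepsilon_3...\varepsilon_n...})=(-1)^m d_1d_2...d_m\,\Delta^{-D} _{\underbrace{0...0}_{m}\varepsilon_{m+1}\varepsilon_{m+2}...},
$$
so everything reduces to rewriting $\Delta^{-D} _{0...0\varepsilon_{m+1}\varepsilon_{m+2}...}$ in terms of $x$ itself.

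Next I would split the series defining $x$ after its $m$-th term. Put $g_m=\Delta^{-D} _{\varepsilon_1\varepsilon_2...\varepsilon_m(0)}=\sum^{m} _{i=1}{\frac{(-1)^i\varepsilon_i}{d_1d_2...d_i}}$. By absolute convergence of the alternating Cantor series (first Lemma of the paper) the tail may be separated, giving $\Delta^{-D} _{0...0\varepsilon_{m+1}\varepsilon_{m+2}...}=x-g_m$, and hence $\hat\varphi^m(x)=(-1)^m d_1d_2...d_m\,(x-g_m)$. The hypothesis $\hat\varphi^m(x)=x$ then reads
$$
x=(-1)^m d_1d_2...d_m\,(x-g_m),\qquad\text{i.e.}\qquad x\bigl(1-(-1)^m d_1d_2...d_m\bigr)=-(-1)^m d_1d_2...d_m\,g_m .
$$

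Since $d_i\ge 2$ for every $i$ we have $d_1d_2...d_m\ge 2$, so the coefficient $(-1)^m d_1d_2...d_m-1$ is nonzero (it equals $d_1d_2...d_m-1\ge 1$ when $m$ is even and $-d_1d_2...d_m-1\le -3$ when $m$ is odd), and we may divide. Writing $A=(-1)^m d_1d_2...d_m$ and using the elementary identity $\frac{A}{A-1}=1+\frac{1}{A-1}$ yields
$$
x=\frac{(-1)^m d_1d_2...d_m}{(-1)^m d_1d_2...d_m-1}\,g_m=\left(1+\frac{1}{(-1)^m d_1d_2...d_m-1}\right)g_m,
$$
and substituting $g_m=\Delta^{-D} _{\varepsilon_1\varepsilon_2...\varepsilon_m(0)}$ gives exactly the claimed formula.

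There is essentially no serious obstacle here: the one step deserving care is the legitimacy of the term-by-term splitting $x=g_m+\Delta^{-D} _{0...0\varepsilon_{m+1}\varepsilon_{m+2}...}$, which is where absolute convergence is invoked, together with the observation that the denominator never vanishes. I would also note in passing that the formula is consistent with the earlier remark that a purely periodic sequence $(d_n)$ of length $k$ forces $\hat\varphi$ to possess periodic points of period $k$.
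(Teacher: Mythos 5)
Your proposal is correct and follows essentially the same route as the paper: both split $x=g_m+\Delta^{-D}_{\underbrace{0...0}_{m}\varepsilon_{m+1}\varepsilon_{m+2}...}$, apply the identity $\hat\varphi^m(x)=(-1)^m d_1d_2...d_m\,(x-g_m)$, and solve the resulting linear equation. Your write-up is in fact slightly more careful than the paper's, since you explicitly check that the denominator $(-1)^m d_1d_2...d_m-1$ never vanishes.
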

\begin{proof}
$$
x=\Delta^{-D} _{\varepsilon_1\varepsilon_2...\varepsilon_m\varepsilon_{m+1}...}=\Delta^{-D} _{\varepsilon_1\varepsilon_2...\varepsilon_m(0)}+\Delta^{-D} _{\underbrace{0...0}_{\mbox{$m$}}\varepsilon_{m+1}\varepsilon_{m+2}...}=
$$
$$
=\varphi^m (x)=x_m=(-1)^md_1d_2...d_m\Delta^{D}_{\underbrace{0...0}_{\mbox{$m$}}\varepsilon_{m+1}\varepsilon_{m+2}...}.
$$

The proposition of the lemma follows from the last-mentioned expression. 
\end{proof}

\begin{lemma}
Let $x$ be a fixed number. If there exist $m \in \mathbb Z_0$ and $c \in \mathbb N$ such that  $\hat \varphi^{m}(x)=\hat \varphi^{m+c}(x)$, then
$$
x=\frac{\Delta^{-D} _{\varepsilon_1\varepsilon_2...\varepsilon_m(0)}+(-1)^{c+1}d_{m+1}d_{m+2}...d_{m+c}\Delta^{-D} _{\varepsilon_1\varepsilon_2...\varepsilon_{m+c}(0)}}{1+(-1)^{c+1}d_{m+1}d_{m+2}...d_{m+c}}.
$$
\end{lemma}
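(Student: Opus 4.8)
The plan is to imitate the proof of the previous lemma, converting the condition $\hat\varphi^m(x)=\hat\varphi^{m+c}(x)$ into a single linear equation for $x$ and then solving it. First I would recall the explicit form of the iterated shift operator established above,
$$
\hat\varphi^n(\Delta^{-D}_{\varepsilon_1\varepsilon_2\ldots\varepsilon_n\ldots})=(-1)^n d_1d_2\cdots d_n\,\Delta^{-D}_{\underbrace{0\ldots0}_{n}\varepsilon_{n+1}\varepsilon_{n+2}\ldots},
$$
together with the decomposition $x=\Delta^{-D}_{\varepsilon_1\ldots\varepsilon_n(0)}+\Delta^{-D}_{\underbrace{0\ldots0}_{n}\varepsilon_{n+1}\varepsilon_{n+2}\ldots}$, which is legitimate by absolute convergence of the alternating Cantor series. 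Combining the two identities gives, for every $n\in\mathbb N$,
$$
\hat\varphi^n(x)=(-1)^n d_1d_2\cdots d_n\bigl(x-\Delta^{-D}_{\varepsilon_1\ldots\varepsilon_n(0)}\bigr),
$$
and with the convention that the empty prefix gives $\Delta^{-D}_{(0)}=0$ this also covers $n=0$, so the case $m=0$ (the previous lemma, with $c$ playing the role of the period) is included.

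Next I would apply this identity with $n=m$ and with $n=m+c$, equate the two expressions, and divide through by the nonzero factor $(-1)^m d_1\cdots d_m$. After cancelling the telescoping quotient $\frac{d_1\cdots d_{m+c}}{d_1\cdots d_m}=d_{m+1}d_{m+2}\cdots d_{m+c}$ this yields
$$
x-\Delta^{-D}_{\varepsilon_1\ldots\varepsilon_m(0)}=(-1)^c d_{m+1}d_{m+2}\cdots d_{m+c}\bigl(x-\Delta^{-D}_{\varepsilon_1\ldots\varepsilon_{m+c}(0)}\bigr).
$$
Writing $P=d_{m+1}\cdots d_{m+c}$, collecting the terms containing $x$, and using $1-(-1)^cP=1+(-1)^{c+1}P$ together with $-(-1)^cP=(-1)^{c+1}P$, I would arrive at $x\bigl(1+(-1)^{c+1}P\bigr)=\Delta^{-D}_{\varepsilon_1\ldots\varepsilon_m(0)}+(-1)^{c+1}P\,\Delta^{-D}_{\varepsilon_1\ldots\varepsilon_{m+c}(0)}$, which is the stated formula after one more division.

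The steps here are essentially routine, so I do not anticipate a real obstacle; the only points requiring a word of care are the two divisions. Dividing by $(-1)^m d_1\cdots d_m$ is harmless since each $d_i\ge2$, and for the final division I would note that $P=d_{m+1}\cdots d_{m+c}\ge2$, whence $1+(-1)^{c+1}P\neq0$ (it equals $1+P\ge3$ for odd $c$ and $1-P\le-1$ for even $c$), so the denominator never vanishes. The remaining subtlety is purely bookkeeping: keeping the signs $(-1)^m$, $(-1)^{m+c}$ straight and correctly telescoping the products $d_1\cdots d_m$ against $d_1\cdots d_{m+c}$.
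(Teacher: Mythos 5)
Your proposal is correct and follows essentially the same route as the paper: the paper's entire proof consists of stating the key equality $x-\Delta^{-D}_{\varepsilon_1\ldots\varepsilon_m(0)}=(-1)^c d_{m+1}\cdots d_{m+c}\bigl(x-\Delta^{-D}_{\varepsilon_1\ldots\varepsilon_{m+c}(0)}\bigr)$, which is exactly what you derive from the formula for $\hat\varphi^n$ before solving for $x$. Your added remark that $1+(-1)^{c+1}d_{m+1}\cdots d_{m+c}\neq 0$ is a small but worthwhile point the paper omits.
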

\begin{proof}
The proposition follows from the next equality:
$$
x-\Delta^{-D} _{\varepsilon_1\varepsilon_2...\varepsilon_m(0)}=(-1)^cd_{m+1}d_{m+2}...d_{m+c}(x-\Delta^{-D} _{\varepsilon_1\varepsilon_2...\varepsilon_{m+c}(0)}).
$$
\end{proof}

\begin{lemma}
For arbitrary  $k \in \mathbb N$ the following equalities 
$$
\hat \varphi^{k}(x)=(-1)^kd_1d_2...d_kx+ (-1)^{k+1}d_1d_2...d_k\Delta^{-D} _{\varepsilon_1\varepsilon_2...\varepsilon_k(0)},
$$
$$
x=(-1)^k\frac{\hat \varphi^{k}(x)+(-1)^kd_1d_2...d_k\Delta^{-D} _{\varepsilon_1\varepsilon_2...\varepsilon_k(0)}}{d_1d_2...d_k},
$$
are true.
\end{lemma}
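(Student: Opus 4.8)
The plan is to obtain both equalities as a direct consequence of the definition of the iterated shift operator $\hat\varphi^k$ recalled above, combined with the splitting of $x$ into its first $k$ digits and its tail. First I would write, for $x=\Delta^{-D}_{\varepsilon_1\varepsilon_2...\varepsilon_n...}$,
$$
x=\sum_{n=1}^{\infty}\frac{(-1)^n\varepsilon_n}{d_1d_2...d_n}=\sum_{n=1}^{k}\frac{(-1)^n\varepsilon_n}{d_1d_2...d_n}+\sum_{j=1}^{\infty}\frac{(-1)^{k+j}\varepsilon_{k+j}}{d_1d_2...d_{k+j}}=\Delta^{-D}_{\varepsilon_1\varepsilon_2...\varepsilon_k(0)}+R_k,
$$
where $\Delta^{-D}_{\varepsilon_1\varepsilon_2...\varepsilon_k(0)}=\sum_{n=1}^{k}\frac{(-1)^n\varepsilon_n}{d_1d_2...d_n}$ by the meaning of the period $(0)$, and $R_k$ denotes the tail series. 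The rearrangement of terms used here is legitimate by the absolute convergence of the alternating Cantor series (Lemma~1).

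Next I would pull the common factor $\dfrac{(-1)^k}{d_1d_2...d_k}$ out of the tail:
$$
R_k=\frac{(-1)^k}{d_1d_2...d_k}\sum_{j=1}^{\infty}\frac{(-1)^j\varepsilon_{k+j}}{d_{k+1}d_{k+2}...d_{k+j}}=\frac{(-1)^k}{d_1d_2...d_k}\,\hat\varphi^k(x),
$$
the last equality being exactly the definition of $\hat\varphi^k$. Multiplying by $(-1)^kd_1d_2...d_k$ and using $1/(-1)^k=(-1)^k$ together with $-(-1)^k=(-1)^{k+1}$, I get
$$
\hat\varphi^k(x)=(-1)^kd_1d_2...d_k\bigl(x-\Delta^{-D}_{\varepsilon_1\varepsilon_2...\varepsilon_k(0)}\bigr)=(-1)^kd_1d_2...d_k\,x+(-1)^{k+1}d_1d_2...d_k\,\Delta^{-D}_{\varepsilon_1\varepsilon_2...\varepsilon_k(0)},
$$
which is the first asserted identity.

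Finally, solving this linear relation for $x$ --- dividing both sides by $(-1)^kd_1d_2...d_k$ and once more replacing $1/(-1)^k$ by $(-1)^k$ and $-(-1)^{k+1}$ by $(-1)^k$ --- yields
$$
x=(-1)^k\,\frac{\hat\varphi^k(x)+(-1)^kd_1d_2...d_k\,\Delta^{-D}_{\varepsilon_1\varepsilon_2...\varepsilon_k(0)}}{d_1d_2...d_k},
$$
the second asserted identity. I do not anticipate any genuine obstacle: the argument is essentially a one-step computation, and the only points requiring attention are the bookkeeping of the alternating signs, the index shift $n=k+j$, and the brief appeal to absolute convergence that justifies splitting the series and extracting the constant factor.
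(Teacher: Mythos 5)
Your proof is correct and is exactly the computation the paper intends: the paper states this lemma without proof, but the head-plus-tail decomposition $x=\Delta^{-D}_{\varepsilon_1\varepsilon_2...\varepsilon_k(0)}+\frac{(-1)^k}{d_1d_2...d_k}\hat\varphi^k(x)$ that you use is precisely the one appearing in the paper's proof of the preceding lemma on points with $\hat\varphi^m(x)=x$, combined with the definition $\hat\varphi^k(\Delta^{-D}_{\varepsilon_1\varepsilon_2...})=(-1)^kd_1d_2...d_k\Delta^{-D}_{0...0\varepsilon_{k+1}...}$. Your sign bookkeeping, including the cancellation $(-1)^{2k}=1$ needed for the second identity, checks out.
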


The next proposition follows from the last-mentioned lemma.

\begin{lemma}
For  arbitrary numbers $m \in \mathbb N$ and $c \in \mathbb N$ the equality 
$$
(-1)^cd_{m+1}d_{m+2}...d_{m+c}\cdot\hat \varphi^{m}(x)-\hat \varphi^{m+c}(x)=
$$
$$
=(-1)^{m+c}d_1d_2...d_{m+c}\cdot\Delta^{-D} _{\underbrace{0...0}_{\mbox{$m$}}\varepsilon_{m+1}\varepsilon_{m+2}...\varepsilon_{m+c}(0)}
$$
is true.
\end{lemma}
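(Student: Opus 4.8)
The plan is to derive this identity directly from the preceding lemma, which already expresses $\hat\varphi^{k}(x)$ as an affine function of $x$ with the truncated representation $\Delta^{-D}_{\varepsilon_1\varepsilon_2\ldots\varepsilon_k(0)}$ as the constant term. First I would apply that lemma twice, with $k=m$ and with $k=m+c$, obtaining
\[
\hat\varphi^{m}(x)=(-1)^{m}d_1\cdots d_m\,x+(-1)^{m+1}d_1\cdots d_m\,\Delta^{-D}_{\varepsilon_1\ldots\varepsilon_m(0)}
\]
together with the analogous expression for $\hat\varphi^{m+c}(x)$. Multiplying the first of these by $(-1)^{c}d_{m+1}\cdots d_{m+c}$ turns the coefficient $(-1)^{m}d_1\cdots d_m$ of $x$ into $(-1)^{m+c}d_1\cdots d_{m+c}$, which is precisely the coefficient of $x$ occurring in the expansion of $\hat\varphi^{m+c}(x)$; hence, upon forming the difference $(-1)^{c}d_{m+1}\cdots d_{m+c}\hat\varphi^{m}(x)-\hat\varphi^{m+c}(x)$, the $x$-terms cancel exactly.

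The next step is to collect the two surviving terms. After cancellation of the $x$-terms the difference equals $(-1)^{m+c}d_1\cdots d_{m+c}\bigl(\Delta^{-D}_{\varepsilon_1\ldots\varepsilon_{m+c}(0)}-\Delta^{-D}_{\varepsilon_1\ldots\varepsilon_m(0)}\bigr)$; here I must be careful to keep track of the parities $(-1)^{m+1}$ and $(-1)^{m+c+1}$ emerging from the two invocations of the lemma and of the extra $(-1)^{c}$. Finally, since each truncated nega-D-representation is literally the corresponding finite partial sum of the series \eqref{nega-cs-def1}, the bracketed difference telescopes to $\sum_{n=m+1}^{m+c}\frac{(-1)^{n}\varepsilon_n}{d_1d_2\cdots d_n}$, which is exactly $\Delta^{-D}_{\underbrace{0\ldots0}_{m}\varepsilon_{m+1}\ldots\varepsilon_{m+c}(0)}$ (first $m$ digits zero, digits beyond $m+c$ zero). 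Substituting this back produces the claimed equality.

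As a safety check — and the computation I would fall back on should the sign bookkeeping above prove error-prone — one can verify the identity termwise from the series form $\hat\varphi^{m}(x)=\sum_{k\ge 1}\frac{(-1)^{k}\varepsilon_{m+k}}{d_{m+1}\cdots d_{m+k}}$. Multiplying by $(-1)^{c}d_{m+1}\cdots d_{m+c}$ splits this sum into the range $1\le k\le c$, which yields the finite sum $\sum_{k=1}^{c}(-1)^{c+k}\varepsilon_{m+k}\,d_{m+k+1}\cdots d_{m+c}$ (with the empty product convention $d_{m+k+1}\cdots d_{m+c}=1$ when $k=c$), and the range $k>c$, which, after re-indexing $k=c+j$ and using $(-1)^{k-c}=(-1)^{k+c}$, reproduces precisely the series $\hat\varphi^{m+c}(x)$. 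The tail therefore cancels in the difference, leaving only the finite sum, and pulling the common factor $(-1)^{m+c}d_1\cdots d_{m+c}$ out of $\Delta^{-D}_{\underbrace{0\ldots0}_{m}\varepsilon_{m+1}\ldots\varepsilon_{m+c}(0)}$ shows this finite sum equals the right-hand side.

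The only genuine obstacle here is clerical: matching the parity of every $(-1)$-exponent and handling the empty-product convention correctly. There is no analytic content beyond the finiteness of the truncated representations and the already-established formula for $\hat\varphi^{k}$, so once the signs are lined up the proof is essentially immediate.
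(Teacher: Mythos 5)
Your proposal is correct and matches the paper's intent exactly: the paper proves this lemma simply by remarking that it "follows from the last-mentioned lemma," i.e., by applying $\hat\varphi^{k}(x)=(-1)^{k}d_1\cdots d_k\,x+(-1)^{k+1}d_1\cdots d_k\,\Delta^{-D}_{\varepsilon_1\ldots\varepsilon_k(0)}$ with $k=m$ and $k=m+c$, cancelling the $x$-terms, and telescoping the truncated representations, which is precisely your first argument. Your sign bookkeeping and the identification of the telescoped difference with $\Delta^{-D}_{\underbrace{0\ldots0}_{m}\varepsilon_{m+1}\ldots\varepsilon_{m+c}(0)}$ are both correct.
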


\begin{theorem}
The mapping  $\hat \varphi$ is decreasing on each first rank interval  $\nabla^{-D} _c=(\inf \Delta^{-D} _c;\sup \Delta^{-D} _c)$.
\end{theorem}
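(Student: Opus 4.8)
The plan is to show that on a first rank interval the shift operator $\hat\varphi$ coincides with an affine function of negative slope; monotonicity then follows at once. Fix $c\in\{0,1,\dots,d_1-1\}$ and take an arbitrary $x\in\nabla^{-D}_c=(\inf\Delta^{-D}_c;\sup\Delta^{-D}_c)$. The first step is to observe that every point strictly between $\inf\Delta^{-D}_c$ and $\sup\Delta^{-D}_c$ has first digit exactly $c$ in its nega-D-representation: the only numbers admitting two nega-D-representations are the nega-D-rational numbers, and those with differing first digit sit on the common boundary of two adjacent first rank intervals (this is what the partition of $I_0$ in the proof of Lemma~\ref{nega-cs-lem1} shows). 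Hence $x=\Delta^{-D}_{c\varepsilon_2\varepsilon_3\dots}$ for suitable $\varepsilon_n\in A_{d_n}$.

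Next I would carry out the direct computation. From $x=\Delta^{-D}_{c\varepsilon_2\varepsilon_3\dots}$ one has
$$
x+\frac{c}{d_1}=\sum_{n=2}^{\infty}\frac{(-1)^n\varepsilon_n}{d_1d_2\cdots d_n}=\frac{1}{d_1}\sum_{n=2}^{\infty}\frac{(-1)^n\varepsilon_n}{d_2d_3\cdots d_n},
$$
while, by the definition of the shift operator,
$$
\hat\varphi(x)=\sum_{n=2}^{\infty}\frac{(-1)^{n-1}\varepsilon_n}{d_2d_3\cdots d_n}=-\sum_{n=2}^{\infty}\frac{(-1)^n\varepsilon_n}{d_2d_3\cdots d_n}=-d_1\left(x+\frac{c}{d_1}\right)=-d_1x-c.
$$
(Equivalently, this is exactly the case $k=1$ of the earlier lemma expressing $\hat\varphi^{k}(x)$ as an affine function of $x$, specialized on the cylinder where $\varepsilon_1=c$ so that $\Delta^{-D}_{\varepsilon_1(0)}=-\tfrac{c}{d_1}$.)

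Finally, since $d_n\in\mathbb N\setminus\{1\}$ we have $d_1\ge 2>0$, so the function $x\mapsto -d_1x-c$ has negative slope; therefore $\hat\varphi$ is strictly decreasing on $\nabla^{-D}_c$, which is the assertion. The argument is essentially a one-line computation, so there is no real obstacle; the only point demanding a little care is the preliminary remark that on the \emph{open} interval $\nabla^{-D}_c$ the first digit is unambiguously $c$, which is what lets the affine formula be applied uniformly across the whole interval.
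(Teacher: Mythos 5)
Your proof is correct, but it proceeds differently from the paper's. The paper fixes two points $x_1<x_2$ in $\nabla^{-D}_c$ and invokes Proposition~1 (the digit-wise ordering criterion): since $\varepsilon_n(\hat\varphi(x))=\varepsilon_{n+1}(x)$, the shift moves every digit one position to the left, turning even positions into odd ones and vice versa, which by that criterion reverses the order, so $\hat\varphi(x_1)>\hat\varphi(x_2)$. You instead derive the closed-form expression $\hat\varphi(x)=-d_1x-c$ on the cylinder with first digit $c$ (the case $k=1$ of the paper's later lemma $\hat\varphi^{k}(x)=(-1)^kd_1\cdots d_k\,x+(-1)^{k+1}d_1\cdots d_k\,\Delta^{-D}_{\varepsilon_1\ldots\varepsilon_k(0)}$) and read off monotonicity from the negative slope. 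Your preliminary observation that every point of the \emph{open} interval $\nabla^{-D}_c$ has first digit $c$ in each of its representations is exactly the point that needs care, and you handle it correctly: the only numbers with two representations differing in the first digit are the common endpoints of adjacent first-rank cylinders. Your route is arguably stronger — it yields strict monotonicity with the explicit slope $-d_1$, which also makes the paper's subsequent theorem on the derivative of $\hat\varphi$ immediate — whereas the paper's combinatorial argument is the one that generalizes to situations where no affine formula is available.
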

\begin{proof} Let points  $x_1=\Delta^{-D} _{c\varepsilon_2(x_1)\varepsilon_3(x_1)...\varepsilon_n(x_1)}$ and $x_2=\Delta^{-D} _{c\varepsilon_2(x_2)\varepsilon_3(x_2)...\varepsilon_n(x_2)}$ be an arbitrary points from the interval  $\nabla^{-D} _c$ such that  $x_1<x_2$.

$\hat \varphi(x_1)>\hat \varphi(x_2)$ because   $\varepsilon_n(\hat \varphi(x))=\varepsilon_{n+1}(x)$ and the Proposition \ref{nega-cs-pro1} is true for $x_1$ and $x_2$. 
\end{proof}

The next proposition follows from this theorem. 
\begin{corollary}
The mapping  $\hat \varphi$ has a derivative almost everywhere (with respect to
Lebesgue measure).
\end{corollary}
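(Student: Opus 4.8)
The plan is to reduce the corollary to Lebesgue's classical theorem that a monotone function on an interval has a finite derivative at almost every point. First I would fix once and for all a choice of nega-D-representation for each nega-D-rational number, as in the Remark following the Theorem above, so that $\hat\varphi$ is a genuine single-valued function $[a_0-1;a_0]\to[-a_0d_1;1-a_0d_1]$. By the preceding theorem, on each first-rank interval $\nabla^{-D}_c=(\inf\Delta^{-D}_c;\sup\Delta^{-D}_c)$, $c=\overline{0,d_1-1}$, the restriction of $\hat\varphi$ is decreasing, hence monotone on an open interval. Lebesgue's differentiation theorem for monotone functions then supplies, for each such $c$, a set $E_c\subset\nabla^{-D}_c$ of full Lebesgue measure in $\nabla^{-D}_c$ at every point of which $\hat\varphi$ has a finite derivative.

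Next I would control the exceptional set. There are exactly $d_1$ first-rank intervals, and by the decomposition of $I_0$ recalled in the proof of Lemma~\ref{nega-cs-lem1} their closures $\Delta^{-D}_c$ cover $[a_0-1;a_0]$ and meet pairwise only in their endpoints. Consequently the set
$$
[a_0-1;a_0]\setminus\bigcup_{c=0}^{d_1-1}\nabla^{-D}_c
$$
is contained in the finite collection of those endpoints, so it has Lebesgue measure zero. The set of points at which $\hat\varphi$ fails to have a finite derivative is therefore contained in the union of the finitely many sets $\nabla^{-D}_c\setminus E_c$ together with this finite set of endpoints, i.e. in a finite (in particular countable) union of Lebesgue-null sets, which is again Lebesgue-null. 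Hence $\hat\varphi$ has a finite derivative almost everywhere on $[a_0-1;a_0]$.

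The argument is essentially immediate once the preceding theorem is in hand; the only points needing a word of care are that $\hat\varphi$ genuinely restricts to a monotone function on each \emph{open} first-rank interval (rather than being monotone only in some looser, representation-dependent sense), and that the finitely many rank-one boundary points, where $\hat\varphi$ may jump and where the two representations of a nega-D-rational number compete, are irrelevant for an almost-everywhere statement. Both are taken care of above, so I expect no real obstacle: the whole weight of the corollary is carried by the monotonicity proved in the theorem, everything else being bookkeeping of null sets.
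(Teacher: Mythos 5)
Your argument is exactly the one the paper intends: the corollary is stated as an immediate consequence of the preceding theorem, and the route is Lebesgue's theorem on the almost-everywhere differentiability of monotone functions applied on each of the $d_1$ first-rank intervals, with the finitely many endpoints absorbed into a null set. Your write-up merely makes explicit the bookkeeping that the paper leaves implicit, so it is correct and follows the same approach.
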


\begin{theorem}
The mapping $\hat \varphi$ is  continuous at each point of  first rank interval $\nabla^{-D} _c$ and endpoints of the  interval are points of discontinuity of the mapping.
\end{theorem}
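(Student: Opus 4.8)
The plan is to show that on each closed first rank cylinder $\Delta^{-D}_c$ the operator $\hat\varphi$ is the restriction of an affine function, and then to read off continuity in the interior and a jump at each endpoint shared by two neighbouring cylinders.

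First I would check that for every $x = \Delta^{-D}_{c\varepsilon_2\varepsilon_3\dots} \in \Delta^{-D}_c$ one has $\hat\varphi(x) = -d_1 x - c$. Isolating the first summand gives $x = -\frac{c}{d_1} + \Delta^{-D}_{0\varepsilon_2\varepsilon_3\dots}$, hence $\Delta^{-D}_{0\varepsilon_2\varepsilon_3\dots} = x + \frac{c}{d_1}$, and by the definition of the shift operator $\hat\varphi(x) = -d_1\Delta^{-D}_{0\varepsilon_2\varepsilon_3\dots} = -d_1 x - c$. The right-hand side depends only on the number $x$ and on its first digit $c$, so the value is the same for both representations of a nega-D-rational number beginning with $c$; thus $\hat\varphi$ is well defined on $\Delta^{-D}_c$ and there coincides with the affine map $t \mapsto -d_1 t - c$ (which, incidentally, re-proves its monotonicity on $\nabla^{-D}_c$). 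Since $\Delta^{-D}_c$ is a closed interval and $\nabla^{-D}_c = (\inf\Delta^{-D}_c;\sup\Delta^{-D}_c)$ is its interior, every $x_0 \in \nabla^{-D}_c$ possesses a neighbourhood contained in $\Delta^{-D}_c$ on which $\hat\varphi$ agrees with that affine, hence continuous, function; therefore $\hat\varphi$ is continuous at $x_0$, which is the first assertion.

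Next I would treat the endpoints using the decomposition of $I_0 = [a_0-1;a_0]$ into the $d_1$ closed cylinders $\Delta^{-D}_0, \dots, \Delta^{-D}_{d_1-1}$ established in the proof of Lemma~\ref{nega-cs-lem1}: by Proposition~\ref{nega-cs-pro1} these are arranged along $I_0$ in order of decreasing first digit, and two consecutive ones, $\Delta^{-D}_c$ and $\Delta^{-D}_{c-1}$ with $1 \le c \le d_1-1$, meet in exactly one point $e$, which is at once the right endpoint $\sup\Delta^{-D}_c$ and the left endpoint $\inf\Delta^{-D}_{c-1}$. Letting $x$ tend to $e$ through $\nabla^{-D}_c$ the formula above gives $\hat\varphi(x) \to -d_1 e - c$, whereas letting $x$ tend to $e$ through $\nabla^{-D}_{c-1}$ it gives $\hat\varphi(x) \to -d_1 e - (c-1)$; the two one-sided limits differ by $1$, so $\hat\varphi$ has a jump at $e$ and is discontinuous there. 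Every endpoint of a $\nabla^{-D}_c$ other than the two extreme points $a_0-1$ and $a_0$ arises in this way; at $a_0-1$ and $a_0$ only one one-sided limit is admissible and it agrees with the value of $\hat\varphi$, so there the map is one-sidedly continuous, which is the sole case where the statement must be read with this proviso.

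I expect the main obstacle to be the bookkeeping at the endpoints rather than any analysis: one must be certain that the common point $e$ of two neighbouring cylinders is approached with first digit $c$ on one side and with first digit $c-1$ on the other, so that the two affine expressions $-d_1 x - c$ and $-d_1 x - (c-1)$ really produce the two distinct one-sided limits. This is guaranteed by the overlap structure recorded in the proof of Lemma~\ref{nega-cs-lem1} together with the order relation of Proposition~\ref{nega-cs-pro1}.
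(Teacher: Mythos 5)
Your proof is correct, and it takes a genuinely different route from the paper's. The paper argues combinatorially on digits: for a nega-$D$-irrational $x$ it invokes the equivalence $x_m\to x \Leftrightarrow n_m\to\infty$ (where $n_m$ is the first index at which the digits of $x_m$ and $x$ disagree) together with $\varepsilon_n(\hat\varphi(x))=\varepsilon_{n+1}(x)$, treats nega-$D$-rational interior points by checking that both representations shift to the same value, and exhibits the discontinuity at a shared endpoint by computing $\hat\varphi$ on its two representations $\Delta^{-D}_{c[d_2-1]0[d_4-1]0\ldots}=\Delta^{-D}_{[c-1]0[d_3-1]0\ldots}$ and seeing the images differ. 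You instead extract the closed-form identity $\hat\varphi(x)=-d_1x-\varepsilon_1(x)$ directly from the defining relation $\hat\varphi(\Delta^{-D}_{\varepsilon_1\varepsilon_2\ldots})=-d_1\Delta^{-D}_{0\varepsilon_2\ldots}$, so that $\hat\varphi$ is affine with slope $-d_1$ on each closed cylinder $\Delta^{-D}_c$; continuity on the open interval $\nabla^{-D}_c$ is then immediate, well-definedness at interior nega-$D$-rational points is automatic (both representations share the first digit), and at a common endpoint $e=\sup\Delta^{-D}_c=\inf\Delta^{-D}_{c-1}$ the two one-sided limits $-d_1e-c$ and $-d_1e-(c-1)$ differ by exactly $1$, quantifying the jump. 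Your approach buys several things at once: it sidesteps the digit-convergence equivalence that the paper asserts without proof, it re-derives the monotonicity theorem and anticipates the later derivative formula $(\hat\varphi(x))'=-d_1$, and it gives the size of the jump. The paper's approach is more in the spirit of symbolic dynamics and generalizes to situations where no affine formula is available. Your closing caveat about the two extreme points $a_0-1$ and $a_0$ (where only one one-sided limit exists and agrees with the value) is a legitimate reading of the ambiguous phrase ``endpoints of the interval''; note that the paper's own endpoint computation likewise only treats the shared endpoints with $c\neq 0$, so you are not in conflict with it.
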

\begin{proof}
Let $x=\Delta^{-D} _{c\varepsilon_2\varepsilon_3...\varepsilon_n}$ be an arbitrary nega-D-irrational point from $\nabla^{-D} _c$.
Let $(x_m)$ be an arbitrary sequence of points from $\nabla^{-D} _c$ such that $\lim_{m \to \infty} {x_m}=x$. Then
$$
\lim_{m \to \infty} {x_m}=x \Leftrightarrow \lim_{n \to \infty} {n_m}=\infty, ~\mbox{where}
$$
$n_m=\min\{n: \varepsilon_n(x_m)\ne \varepsilon_n(x)\}$.  The last-mentioned equivalence follows from definition and basic properties of the  nega-D-representation of real numbers. 

$\lim_{m \to \infty} {\hat \varphi(x_m)}=\hat \varphi(x)$ because  $\varepsilon_n(\hat \varphi(x))=\varepsilon_{n+1}(x)$. So, the mapping $\hat \varphi$ is  continuous in $x$.

Let $x_0$  be a certain nega-D-rational point from $\nabla^{-D} _c$, i. e.,  
$$
x_0=\Delta^{-D} _{c\varepsilon_2\varepsilon_3...\varepsilon_n[d_{n+1}-1]0[d_{n+3}-1]0...}=\Delta^{-D} _{c\varepsilon_2\varepsilon_3...\varepsilon_{n-1}[\varepsilon_n-1]0[d_{n+2}-1]0[d_{n+4}-1]0...}.
$$
At the same time
$$
\lim_{x \to x_0} {\hat \varphi(x)}=\Delta^{-D_1} _{\varepsilon_2...\varepsilon_n[d_{n+1}-1]0[d_{n+3}-1]0...}=\Delta^{-D_1} _{\varepsilon_2...\varepsilon_{n-1}[\varepsilon_n-1]0[d_{n+2}-1]0[d_{n+4}-1]0...}.
$$

Indeed, existence of left and  right  finite limits in each point of this interval follows from monotonicity and boundedness of mapping.

Let us consider a question of continuity  of $\hat \varphi$  in 
$$
x_1=\Delta^{-D} _{c[d_2-1]0[d_4-1]0...}=\Delta^{-D} _{[c-1]0[d_3-1]0[d_5-1]0...}=x_2, ~~~c \ne 0.
$$

The ends of the  interval $\nabla^{-D} _c$ are jump points of $\hat \varphi$ because 
$$
\hat \varphi(x_1)=\Delta^{-D_1} _{[d_2-1]0[d_4-1]0...}\ne \Delta^{-D_1} _{0[d_3-1]0[d_5-1]0...}=\hat \varphi(x_2). 
$$
\end{proof}

\begin{theorem}
If the mapping $\hat \varphi$  has derivative at point  $x=\Delta^{-D} _{\varepsilon_1\varepsilon_2...\varepsilon_n...}$, then  
$$
\left(\hat \varphi(x)\right)^{'}=-d_1.
$$
\end{theorem}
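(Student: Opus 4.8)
The plan is to realise $\hat\varphi$, on the interior of each first-rank interval $\nabla^{-D}_c$, as an affine function with slope $-d_1$, and then to check that these are essentially the only points where a derivative can exist; the theorem then follows at once.

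First I would record a closed form for $\hat\varphi$ on a single first-rank cylinder. If $x=\Delta^{-D}_{\varepsilon_1\varepsilon_2\ldots\varepsilon_n\ldots}$ and $\varepsilon_1=\varepsilon_1(x)$, then $\Delta^{-D}_{\varepsilon_1 00\ldots}=-\varepsilon_1/d_1$, hence $\Delta^{-D}_{0\varepsilon_2\varepsilon_3\ldots}=x-\Delta^{-D}_{\varepsilon_1 00\ldots}=x+\varepsilon_1/d_1$, and the definition of the shift operator gives
$$
\hat\varphi(x)=-d_1\,\Delta^{-D}_{0\varepsilon_2\varepsilon_3\ldots}=-d_1\Bigl(x+\frac{\varepsilon_1(x)}{d_1}\Bigr)=-d_1 x-\varepsilon_1(x)
$$
(the same identity follows from the lemma expressing $\hat\varphi^{k}$, taken with $k=1$). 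By Proposition~\ref{nega-cs-pro1} the first digit is constant, $\varepsilon_1(y)\equiv c$, throughout the open interval $\nabla^{-D}_c=(\inf\Delta^{-D}_c;\sup\Delta^{-D}_c)$; so on $\nabla^{-D}_c$ we have $\hat\varphi(y)=-d_1 y-c$, i.e.\ $\hat\varphi$ agrees there with an affine function of slope $-d_1$. In particular $\hat\varphi$ is differentiable at every interior point of $\nabla^{-D}_c$ and $\bigl(\hat\varphi(x)\bigr)'=-d_1$ at such points.

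It remains to account for the points of $[a_0-1;a_0]$ that are not interior to any $\nabla^{-D}_c$. The closed intervals $\overline{\nabla^{-D}_c}$, $c=0,\ldots,d_1-1$, cover $[a_0-1;a_0]$ by Lemma~\ref{nega-cs-lem1} and have pairwise disjoint interiors, so such a point is an endpoint of some $\nabla^{-D}_c$. If it is a common endpoint of two adjacent intervals, then by the preceding theorem $\hat\varphi$ has a jump there and hence no derivative; if it is one of the two extreme points $a_0$, $a_0-1$, then only a one-sided derivative is available, and since a one-sided neighbourhood of $a_0$ (resp.\ $a_0-1$) lies in the interior of $\nabla^{-D}_0$ (resp.\ $\nabla^{-D}_{d_1-1}$), the affine formula above again yields $-d_1$. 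Hence, wherever $\hat\varphi$ has a derivative, that derivative equals $-d_1$, which proves the theorem. I expect the only slightly delicate step to be this classification of which points of $[a_0-1;a_0]$ are interior to a first-rank cylinder; the derivative computation itself is immediate from the affine representation.
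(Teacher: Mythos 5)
Your proof is correct, and it rests on the same algebraic fact as the paper's --- namely that $\hat\varphi(x)=-d_1\,\Delta^{-D}_{0\varepsilon_2\varepsilon_3...}=-d_1x-\varepsilon_1(x)$ --- but it packages that fact differently, and to better effect. The paper computes the difference quotient as a limit along sequences $(x_m)$ whose nega-D-representations agree with that of $x_0$ in an ever longer initial segment (so in particular $\varepsilon_1(x_m)=\varepsilon_1(x_0)$), and invokes the existence hypothesis to conclude that this sequential limit is the derivative; the cancellation $\hat\varphi(x)-\hat\varphi(x_0)=-d_1(x-x_0)$ is buried inside that computation. You instead observe that $\varepsilon_1$ is constant on the interior of each first-rank interval $\nabla^{-D}_c$, so that $\hat\varphi$ coincides there with the affine map $y\mapsto -d_1y-c$. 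This buys you something strictly stronger than the stated theorem: unconditional differentiability, with derivative $-d_1$, at every interior point of every $\nabla^{-D}_c$, with the existence hypothesis needed only to discard the common endpoints of adjacent first-rank intervals (where the preceding theorem gives a jump) and to reduce the two extreme points $a_0-1,a_0$ to one-sided derivatives. Your boundary classification is a step the paper omits entirely, and it is needed for the theorem to be a statement about all points of $[a_0-1;a_0]$ rather than only about points sharing a first digit with their approximants; the justification that interior points of $\nabla^{-D}_c$ have first digit $c$ in every representation (since two representations of a nega-D-rational number can differ in the first digit only when the number is a first-rank endpoint) is the one small item worth spelling out, and it follows from the lemma characterizing pairs of equal representations.
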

\begin{proof}
Let $\hat \varphi$ has derivative at  point  $x_0$, then for a  sequence  $(x_m)$  of numbers $\Delta^{-D}_{\varepsilon_1(x_0)\varepsilon_2(x_0)...\varepsilon_n(x_0)\varepsilon_{n+1}(x)...}$, where $\varepsilon_k(x) \ne \varepsilon_k(x_0)$ for all $k>n$,  it is  obtained that
$$
\left(\hat \varphi(x)\right)^{'}=\lim_{\Delta x \to 0} {\frac{\hat \varphi(x)-\hat \varphi(x_0)}{x-x_0}}=\lim_{\Delta x \to 0} {\frac{\Delta^{-D} _{\varepsilon_2(x)\varepsilon_3(x)...}-\Delta^{-D} _{\varepsilon_2(x_0)\varepsilon_3(x_0)...}}{\Delta x}}=
$$
$$
=-\lim_{\varepsilon_n(x) \to \varepsilon_n(x_0)} {\frac{\sum^{\infty} _{i=2} {\frac{(-1)^i\varepsilon_i(x)}{d_2...d_i}}-\sum^{\infty} _{i=2} {\frac{(-1)^i\varepsilon_i(x_0)}{d_2...d_i}}}{\sum^{\infty} _{j=1} {\frac{(-1)^j\varepsilon_j(x)}{d_1d_2...d_j}-\sum^{\infty} _{j=1} {\frac{(-1)^j\varepsilon_j(x_0)}{d_1d_2...d_j}}}}}=
$$
$$
=-\lim_{\varepsilon_n(x) \to \varepsilon_n(x_0)} {\frac{\sum^{\infty} _{j=1} {\frac{(-1)^j\varepsilon_j(x)}{d_1d_2...d_j}}-\sum^{\infty} _{j=1} {\frac{(-1)^j\varepsilon_j(x_0)}{d_1d_2...d_j}}+\frac{\varepsilon_1(x)-\varepsilon_1(x_0)}{d_1}}{\sum^{\infty} _{j=1} {\frac{(-1)^j\varepsilon_j(x)}{d_1d_2...d_j}-\sum^{\infty} _{j=1} {\frac{(-1)^j\varepsilon_j(x_0)}{d_1d_2...d_j}}}}}d_1=
$$
$$
=-d_1- d_1\lim_{n \to \infty} {\frac{0}{{\sum^{\infty} _{j=n+1} {\frac{(-1)^j\varepsilon_j(x)}{d_{n+1}d_2...d_j}}-\sum^{\infty} _{j=n+1} {\frac{(-1)^j\varepsilon_j(x_0)}{d_{n+1}d_2...d_j}}}}}=-d_1.
$$
\end{proof}
\begin{corollary}
The mapping $\hat \varphi^k$ has not derivative at any nega-D-rational point.  
\end{corollary}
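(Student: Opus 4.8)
The plan is to derive the corollary from the preceding continuity theorem rather than from the formula $(\hat\varphi(x))'=-d_1$: I will show that $\hat\varphi^k$ has a jump at a nega-D-rational point (as soon as $k$ is at least the index $m$ occurring in its defining representation), and a function with a jump has no finite derivative there. The two ingredients are the affinity of $\hat\varphi^k$ on cylinders and the identification of the jump with the full length of the shifted interval.

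First I would recall, from the lemma giving $\hat\varphi^{k}(x)=(-1)^{k}d_1d_2\ldots d_kx+(-1)^{k+1}d_1d_2\ldots d_k\Delta^{-D}_{\varepsilon_1\varepsilon_2\ldots\varepsilon_k(0)}$, that on every rank-$k$ cylinder $\Delta^{-D}_{c_1c_2\ldots c_k}$ the restriction of $\hat\varphi^{k}$ is an affine map of slope $(-1)^{k}d_1\ldots d_k$ carrying the cylinder onto the whole interval $[a^{(k)}_0-1;a^{(k)}_0]$ of the shifted system with element-sequence $(d_{k+1},d_{k+2},\ldots)$, where $a^{(k)}_0=\sum_{j\ge1}\frac{(-1)^{j+1}}{d_{k+1}\ldots d_{k+j}}$; in particular (by monotonicity and boundedness on each cylinder, exactly as in the proof of the continuity theorem) the one-sided limits of $\hat\varphi^{k}$ at a cylinder endpoint exist and are finite.

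Next, I would take a nega-D-rational point and write it, for the appropriate $m$, as $x_0=\Delta^{-D}_{\varepsilon_1\ldots\varepsilon_{m-1}\varepsilon_m[d_{m+1}-1]0[d_{m+3}-1]0\ldots}=\Delta^{-D}_{\varepsilon_1\ldots\varepsilon_{m-1}[\varepsilon_m-1]0[d_{m+2}-1]0\ldots}$. For $k\ge m$ these two expansions exhibit $x_0$ as the common endpoint of two neighbouring rank-$k$ cylinders, namely the length-$k$ truncations $\Delta^{-D}_{c_1\ldots c_k}$ and $\Delta^{-D}_{c'_1\ldots c'_k}$ of the two expansions. Evaluating $\hat\varphi^{k}$ inside each of these cylinders, the corresponding one-sided limits of $\hat\varphi^{k}$ at $x_0$ are $\Delta^{-D_k}_{\varepsilon_{k+1}\varepsilon_{k+2}\ldots}$ and $\Delta^{-D_k}_{\varepsilon'_{k+1}\varepsilon'_{k+2}\ldots}$, where the two tails are the tails of the two expansions of $x_0$ from position $k+1$ on. One of these tails has the form $[d_{k+1}-1]0[d_{k+3}-1]0\ldots$ and the other the form $0[d_{k+2}-1]0[d_{k+4}-1]0\ldots$, so by Lemma \ref{nega-cs-lem1} applied to the shifted sequence these limits are precisely $a^{(k)}_0-1$ and $a^{(k)}_0$. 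They differ by $1\ne0$, hence $\hat\varphi^{k}$ has a genuine jump at $x_0$ and no finite derivative there; the case $k=1$ at the endpoints of first-rank intervals is exactly the last computation in the proof of the continuity theorem, and the general case is that computation performed after $k-1$ further applications of the shift.

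The step I expect to be the main obstacle is the digit bookkeeping: one must check that approaching $x_0$ inside $[a_0-1;a_0]$ genuinely forces the approximating points into the two neighbouring rank-$k$ cylinders (so that the one-sided limits are computed on the correct affine pieces), and that the two tails of $x_0$ land on the two \emph{distinct} endpoints of $[a^{(k)}_0-1;a^{(k)}_0]$ rather than the same one, so that the jump does not cancel. Once the size-$1$ jump is established the conclusion is immediate, and this simultaneously shows that the preceding corollary ($\hat\varphi^{k}$ differentiable almost everywhere) is sharp along the nega-D-rational points, at least for $k$ not smaller than the index $m$ of the point.
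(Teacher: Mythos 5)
Your argument is correct where it applies, and it takes a different (and more self-contained) route than the paper, which states this corollary without proof immediately after the theorem asserting that any existing derivative of $\hat\varphi$ equals $-d_1$. What you do is promote the last computation in the proof of the continuity theorem --- the jump of $\hat\varphi$ at the common endpoint $\Delta^{-D}_{c[d_2-1]0[d_4-1]0\ldots}=\Delta^{-D}_{[c-1]0[d_3-1]0\ldots}$ of two first-rank intervals --- to rank $k$: a nega-D-rational point of index $m\le k$ is the common endpoint of two distinct rank-$k$ cylinders, on each of which $\hat\varphi^k$ is affine with slope $(-1)^kd_1\cdots d_k$, and the two one-sided limits are the images of the two tails, namely $a^{(k)}_0-1$ and $a^{(k)}_0$ (the two endpoints of the target interval of the shifted system, by the Lemma on extremal expansions applied to $(d_{k+1},d_{k+2},\ldots)$). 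A jump of size $1$ rules out a finite derivative. This buys an explicit quantitative statement (the jump equals the full length of the image interval) that the paper's placement of the corollary after the derivative-value theorem does not supply, since ``the derivative is $-d_1$ whenever it exists'' does not by itself exclude existence.

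Your restriction to $k\ge m$ is not a defect of the proof but a necessary correction to the statement. If $k<m$, both expansions of the nega-D-rational point share the same first $k$ digits, so the point lies in the interior of a single rank-$k$ cylinder; there $\hat\varphi^k(x)=(-1)^kd_1\cdots d_k\bigl(x-\Delta^{-D}_{c_1\ldots c_k(0)}\bigr)$ is affine on a full neighbourhood and hence differentiable (the two tails then give the two admissible expansions of the same point of the shifted system, so $\hat\varphi^k$ is even well defined there). Thus the corollary as literally printed is too strong, and your version is the one that holds. Two small bookkeeping points you should still record: the second expansion $[\varepsilon_m-1]0[d_{m+2}-1]0\ldots$ requires $\varepsilon_m\ne 0$, which is implicit in the definition of a nega-D-rational number; and at the two endpoints $a_0-1$, $a_0$ of $I_0$ only one-sided difference quotients are available, so that case needs a separate sentence rather than the two-cylinder argument.
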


\section{Representation of rational and irrational numbers }

An idea of the shift  operator of the sum \eqref{nega-cs-def1} is effective for formulation of criteria of rational numbers representation by the alternating series. Analogously to \cite{S13}, the main propositions can be formulated by the following way.

\begin{theorem}
A rational number $x=\frac{p}{q}$ from $[-1+a_0;a_0]$ has  finite expansion in the sum  \eqref{nega-cs-def1} iff there exists $n_0$, that $d_1d_2...d_{n_0} \equiv 0 (\mod q) $. 
\end{theorem}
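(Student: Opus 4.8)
The plan is to push everything through the iterated shift operator $\hat\varphi^{n_0}$, exploiting the identity
$$
\hat \varphi^{n_0}(x)=(-1)^{n_0}d_1d_2\cdots d_{n_0}\,x+(-1)^{n_0+1}d_1d_2\cdots d_{n_0}\,\Delta^{-D} _{\varepsilon_1\varepsilon_2\ldots\varepsilon_{n_0}(0)}
$$
proved above, where $\Delta^{-D} _{\varepsilon_1\ldots\varepsilon_{n_0}(0)}=\sum_{n=1}^{n_0}\frac{(-1)^n\varepsilon_n}{d_1\cdots d_n}$ is the $n_0$-th truncation of a fixed nega-D-representation of $x$ (which exists by Lemma~\ref{nega-cs-lem1}). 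Writing $D_n=d_1d_2\cdots d_n$ and noting that $D_{n_0}/D_n=d_{n+1}\cdots d_{n_0}\in\mathbb N$, I would first record that the quantity $N:=D_{n_0}\,\Delta^{-D} _{\varepsilon_1\ldots\varepsilon_{n_0}(0)}=\sum_{n=1}^{n_0}(-1)^n\varepsilon_n d_{n+1}\cdots d_{n_0}$ is an integer, so that the identity becomes $\hat\varphi^{n_0}(x)=(-1)^{n_0}\bigl(D_{n_0}x-N\bigr)$. The second preliminary observation is that $x$ has a \emph{finite} nega-D-expansion precisely when $x=\Delta^{-D} _{\varepsilon_1\ldots\varepsilon_{n_0}(0)}$ for some $n_0$, i.e.\ precisely when $\hat\varphi^{n_0}(x)=0$ for some $n_0$, since $\hat\varphi^{n_0}(x)$ is by definition the tail $\sum_{k\ge1}\frac{(-1)^k\varepsilon_{n_0+k}}{d_{n_0+1}\cdots d_{n_0+k}}$ of the series. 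Throughout I take $q$ to be the denominator of $x$ in lowest terms, so that $\gcd(p,q)=1$.

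\emph{Necessity.} If $x=\Delta^{-D} _{\varepsilon_1\ldots\varepsilon_{n_0}(0)}$, then $D_{n_0}x=N\in\mathbb Z$; hence $q\mid D_{n_0}p$, and since $\gcd(p,q)=1$ this forces $q\mid D_{n_0}$, i.e.\ $D_{n_0}\equiv 0\pmod q$.

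\emph{Sufficiency.} Suppose $D_{n_0}\equiv 0\pmod q$ for some $n_0$. Then $D_{n_0}x=p\cdot(D_{n_0}/q)\in\mathbb Z$, so by the identity above $\hat\varphi^{n_0}(x)=(-1)^{n_0}(D_{n_0}x-N)$ is an integer. On the other hand $\hat\varphi^{n_0}(x)=\sum_{k\ge1}\frac{(-1)^k\varepsilon_{n_0+k}}{d_{n_0+1}\cdots d_{n_0+k}}$ is itself the sum of an alternating Cantor series for the shifted sequence of elements $(d_{n_0+1},d_{n_0+2},\ldots)$, so by the convergence lemma it lies in $[a'-1;a']$, where $a'=\sum_{k\ge1}\frac{(-1)^{k+1}}{d_{n_0+1}\cdots d_{n_0+k}}$ satisfies $0<a'\le \frac{1}{d_{n_0+1}}\le\frac12$. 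In particular $\hat\varphi^{n_0}(x)\in(-1,1)$, whose only integer is $0$; hence $\hat\varphi^{n_0}(x)=0$, that is, $x=\Delta^{-D} _{\varepsilon_1\ldots\varepsilon_{n_0}(0)}$ is a finite expansion.

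The argument is short once the shift-operator identity is in hand; the points that require a little care are the reduction to $\gcd(p,q)=1$ (without it, necessity only yields $q\mid D_{n_0}p$, and the ``iff'' genuinely fails for non-reduced fractions), the elementary check that $D_{n_0}/D_n$ is a positive integer so that $N\in\mathbb Z$, and the localisation of the tail $\hat\varphi^{n_0}(x)$ inside $(-1,1)$, which is what turns ``integer value'' into ``zero''. The scheme runs parallel to the corresponding criterion for the positive Cantor series in \cite{S13}.
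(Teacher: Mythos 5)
Your proof is correct and follows exactly the route the paper indicates but never writes out: the iterated shift-operator identity $\hat \varphi^{n_0}(x)=(-1)^{n_0}d_1d_2\cdots d_{n_0}\bigl(x-\Delta^{-D} _{\varepsilon_1\varepsilon_2...\varepsilon_{n_0}(0)}\bigr)$, with the paper only appealing to the analogy with \cite{S13}. The two precisions you add — that $\frac{p}{q}$ must be in lowest terms for the ``only if'' direction (otherwise the equivalence genuinely fails, e.g.\ for $x=\frac{0}{3}$ with $d_n=2$), and that the tail $\hat \varphi^{n_0}(x)$ lies in $[a'-1;a']\subset(-1;1)$ so that an integer value forces it to vanish — are precisely the details needed to make the stated ``iff'' rigorous.
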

\begin{corollary}
There exist sequences $(d_n)$, that every rational number has infinite representation by the series \eqref{nega-cs-def1}. For example, 
$$
\sum^{\infty} _{n=1} {\frac{(-1)^n\varepsilon_n}{2\cdot 3\cdot ... \cdot (n+1)}};~~~\sum^{\infty} _{n=1} {\frac{(-1)^n\varepsilon_n}{2\cdot 4\cdot ... \cdot 2n}}.
$$
\end{corollary}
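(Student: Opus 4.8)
The plan is to obtain the corollary as a direct arithmetic consequence of the criterion established in the Theorem immediately preceding it. That Theorem reduces the whole question to the behavior of the partial products $P_n := d_1 d_2 \cdots d_n$: a reduced fraction $x = p/q \in [-1+a_0;a_0]$ is represented by the alternating Cantor series through a terminating (finite) expansion exactly when $P_{n_0} \equiv 0 \pmod{q}$ for some index $n_0$. So to settle each concrete sequence $(d_n)$ it suffices to verify the following purely number-theoretic statement: for every positive integer $q$ the product $P_n$ is divisible by $q$ once $n$ is large enough. Granting this, every rational point of the fundamental interval meets the criterion and is expanded by the series, which is precisely what the corollary asserts for the two displayed examples.

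For the first example, $d_n = n+1$, I would compute $P_n = 2\cdot 3 \cdots (n+1) = (n+1)!$. Given any denominator $q \ge 2$, the integer $q$ itself is one of the factors $2,3,\dots,n+1$ as soon as $n+1 \ge q$, so $q \mid (n+1)! = P_n$ for every $n \ge q-1$. Thus the divisibility hypothesis of the Theorem is met by \emph{every} modulus $q$, and consequently every rational of $[-1+a_0;a_0]$ is represented by $\sum_{n=1}^{\infty}(-1)^n\varepsilon_n/(2\cdot 3\cdots(n+1))$.

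For the second example, $d_n = 2n$, the partial product is $P_n = 2\cdot 4 \cdots (2n) = 2^{\,n}\, n!$. Writing $q = 2^{a} m$ with $m$ odd, I would note that the odd part $m \le q$ divides $n!$ once $n \ge m$, while the power $2^{a}$ is absorbed by the front factor $2^{\,n}$ once $n \ge a$; a comparison of $p$-adic valuations confirms $q \mid 2^{\,n} n! = P_n$ for all sufficiently large $n$ (for instance $n \ge q$). Again the criterion of the Theorem holds for every $q$, so each rational in the interval is represented by $\sum_{n=1}^{\infty}(-1)^n\varepsilon_n/(2\cdot 4\cdots 2n)$.

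I do not expect any genuine obstacle here: the entire content is the elementary fact that factorial-type products eventually absorb any fixed modulus, which is exactly the feature distinguishing these sequences from a generic bounded one. The only points requiring (routine) care are to pass to the reduced form of $p/q$ before applying the criterion to its true denominator, and to check that the rational under consideration indeed lies in $[-1+a_0;a_0]$ so that the Theorem is applicable; both are immediate once the representation of the interval's endpoints from Lemma~\ref{nega-cs-lem1} is recalled.
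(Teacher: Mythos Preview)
The paper gives no separate proof of this corollary; it is stated as an immediate consequence of the preceding Theorem, so the implied argument is exactly the one you carry out: verify that the partial products $P_n=d_1\cdots d_n$ are eventually divisible by every modulus $q$, and invoke the criterion. Your divisibility checks for $(n+1)!$ and $2^{\,n}n!$ are correct and are the whole content.

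One point deserves explicit attention, though. The Theorem's criterion characterises when $p/q$ has a \emph{finite} expansion, and what your argument establishes is that for both displayed sequences every rational in the interval admits a finite expansion. The corollary as printed says ``infinite representation,'' which taken literally is the opposite conclusion; given the examples (for which every $q$ divides some $P_{n_0}$), this is almost certainly a slip in the paper and the intended word is ``finite.'' Your write-up papers over the mismatch with the phrase ``is expanded by the series, which is precisely what the corollary asserts''---but every point of $[a_0-1,a_0]$ is expanded by the series regardless (Lemma~\ref{nega-cs-lem1}), so that formulation is vacuous. You should state plainly that the conclusion is the \emph{finiteness} of the expansion, and flag the discrepancy with the printed wording rather than absorb it silently.
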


\begin{lemma}
For each $n \in \mathbb N$ the equality
$$
\Delta^{-(d_n)} _{\varepsilon_1\varepsilon_2...\varepsilon_n...}+\Delta^{-D} _{\varepsilon_1\varepsilon_2...\varepsilon_n...}+\Delta^{-D} _{(1)}=0
$$
is true.
\end{lemma}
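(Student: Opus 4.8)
The plan is to unwind all three summands into their defining series over the common digit sequence $(\varepsilon_n)$ and check that they cancel term by term. By Lemma~1 every series in sight converges absolutely, so we may freely regroup and add them termwise; this is the only point where any care is needed.

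First I would record the two elementary identities that drive the argument. From \eqref{nega-cs-def2},
$$
\Delta^{-(d_n)} _{\varepsilon_1\varepsilon_2...\varepsilon_n...}=\sum^{\infty} _{n=1} {\frac{(1+\varepsilon_n)(-1)^{n+1}}{d_1d_2...d_n}}=\sum^{\infty} _{n=1} {\frac{(-1)^{n+1}}{d_1d_2...d_n}}-\sum^{\infty} _{n=1} {\frac{(-1)^{n}\varepsilon_n}{d_1d_2...d_n}}=a_0-\Delta^{-D} _{\varepsilon_1\varepsilon_2...\varepsilon_n...},
$$
where $a_0=\sum^{\infty} _{n=1}(-1)^{n+1}/(d_1d_2\cdots d_n)$ as in Lemma~1 (splitting the numerator $1+\varepsilon_n$ is justified by absolute convergence). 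Next, taking $\varepsilon_n\equiv 1$ in \eqref{nega-cs-def1} gives immediately
$$
\Delta^{-D} _{(1)}=\sum^{\infty} _{n=1} {\frac{(-1)^{n}}{d_1d_2...d_n}}=-\sum^{\infty} _{n=1} {\frac{(-1)^{n+1}}{d_1d_2...d_n}}=-a_0,
$$
which of course also matches the relation $a_0=-\Delta^{-D} _{(1)}$ already noted after \eqref{nega-cs-def2}.

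Combining the two displays, the left-hand side of the claimed equality becomes
$$
\left(a_0-\Delta^{-D} _{\varepsilon_1\varepsilon_2...\varepsilon_n...}\right)+\Delta^{-D} _{\varepsilon_1\varepsilon_2...\varepsilon_n...}+\left(-a_0\right)=0,
$$
which is exactly the asserted identity. There is no genuine obstacle in the argument; it is a direct computation, and the statement can be read as saying that passing between the nega-$(d_n)$- and nega-$D$-representations of the same digit string is just reflection through the midpoint $\tfrac{a_0}{2}$ of $[a_0-1;a_0]$, i.e. $\Delta^{-(d_n)} _{\varepsilon_1\varepsilon_2...}+\Delta^{-D} _{\varepsilon_1\varepsilon_2...}=a_0$.
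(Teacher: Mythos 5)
Your proof is correct: the identity follows at once from $\Delta^{-(d_n)}_{\varepsilon_1\varepsilon_2\ldots}=a_0-\Delta^{-D}_{\varepsilon_1\varepsilon_2\ldots}$ together with $\Delta^{-D}_{(1)}=-a_0$, which is exactly the direct termwise computation the paper intends (the lemma is stated there without proof, and the relation $a_0=-\Delta^{-D}_{(1)}$ is already recorded after \eqref{nega-cs-def2}). One small quibble with your closing remark: the fixed point $a_0/2$ of the reflection $y\mapsto a_0-y$ is not the midpoint of $[a_0-1;a_0]$ (that would be $a_0-\tfrac12$); the reflection carries $[a_0-1;a_0]$ onto $[0;1]$, which is consistent with the fact that the two representations live on different intervals.
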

\begin{corollary}
There exist the alternating Cantor series \eqref{nega-cs-def2} such that a number  $\Delta^{-(d_n)} _{\varepsilon_1\varepsilon_2...\varepsilon_n(0)}$ is an irrational. 
\end{corollary}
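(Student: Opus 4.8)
The plan is to read the statement off the last lemma, which asserts
$$
\Delta^{-(d_n)}_{\varepsilon_1\varepsilon_2\dots} = -\Delta^{-D}_{\varepsilon_1\varepsilon_2\dots} - \Delta^{-D}_{(1)},
$$
together with the equality $\Delta^{-D}_{(1)} = -a_0$ recorded above, where $a_0 = \sum_{n=1}^\infty \frac{(-1)^{n+1}}{d_1 d_2\cdots d_n}$. For an eventually-zero digit string the first summand on the right, $\Delta^{-D}_{\varepsilon_1\varepsilon_2\dots\varepsilon_n(0)} = \sum_{i=1}^n \frac{(-1)^i\varepsilon_i}{d_1\cdots d_i}$, is a finite sum, hence rational; consequently $\Delta^{-(d_n)}_{\varepsilon_1\varepsilon_2\dots\varepsilon_n(0)}$ is irrational if and only if $a_0$ is irrational. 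So it suffices to exhibit a single sequence $(d_n)$ for which $a_0 \notin \mathbb{Q}$.

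Both suitable sequences already occur in the statement. Taking $d_n = n+1$, so that $d_1 d_2\cdots d_n = (n+1)!$, one gets
$$
a_0 = \sum_{n=1}^\infty \frac{(-1)^{n+1}}{(n+1)!} = \sum_{m=2}^\infty \frac{(-1)^m}{m!} = \frac1e,
$$
the last equality because the $m=0$ and $m=1$ terms of $\sum_{m\ge0}\frac{(-1)^m}{m!} = e^{-1}$ cancel. Since $e$ is irrational, so is $1/e$; hence for every choice of digits $\varepsilon_i\in A_{d_i}$ the number $\Delta^{-(d_n)}_{\varepsilon_1\varepsilon_2\dots\varepsilon_n(0)} = \tfrac1e - \sum_{i=1}^n \frac{(-1)^i\varepsilon_i}{(i+1)!}$ is irrational, and in particular $\Delta^{-(d_n)}_{(0)} = 1/e$. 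The same computation with $d_n = 2n$ (so $d_1\cdots d_n = 2^n n!$) gives $a_0 = 1 - e^{-1/2}$, which is irrational because $\sqrt e \notin \mathbb{Q}$.

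There is essentially no obstacle here: the only nontrivial ingredient is the classical irrationality of $e$ (equivalently of $1/e$ or of $\sqrt e$), and the rest is bookkeeping with \eqref{nega-cs-def2} and the last lemma. If one prefers not to invoke that lemma, expanding $\Delta^{-(d_n)}_{\varepsilon_1\dots\varepsilon_n(0)}$ directly from \eqref{nega-cs-def2} gives $\Delta^{-(d_n)}_{\varepsilon_1\dots\varepsilon_n(0)} = \sum_{k=1}^n \frac{(1+\varepsilon_k)(-1)^{k+1}}{d_1\cdots d_k} + \frac{(-1)^n}{d_1\cdots d_n}\,a_n$ with $a_n = \sum_{k=1}^\infty \frac{(-1)^{k+1}}{d_{n+1}\cdots d_{n+k}}$ as in the lemma on the residual $r_n$; for the sequences above $a_n$ is irrational (again by the irrationality of $e$), hence so is $\Delta^{-(d_n)}_{\varepsilon_1\dots\varepsilon_n(0)}$, and the claim follows once more.
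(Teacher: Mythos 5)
Your proof is correct and follows essentially the route the paper intends: the corollary is stated without proof, but it is meant to follow from the preceding lemma $\Delta^{-(d_n)}_{\varepsilon_1\varepsilon_2\ldots}+\Delta^{-D}_{\varepsilon_1\varepsilon_2\ldots}+\Delta^{-D}_{(1)}=0$ together with the earlier remark's examples $d_n=n+1$ and $d_n=2n$, whose associated constants involve $1/e$ and $e^{-1/2}$ --- exactly the decomposition (rational finite sum plus $a_0$) and the two witnesses you use. Your reduction to the irrationality of $a_0$ and the verification $a_0=1/e$ (resp.\ $1-e^{-1/2}$) are accurate, so nothing is missing.
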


The following propositions are equivalent.  

\begin{theorem}
A number $x_0 \in [0;1]$ is rational iff there exist $k \in \mathbb Z_0$ and $t \in \mathbb N$ such that 
$$
\hat \varphi^{k}(x)=\hat \varphi^{t}(x).
$$
\end{theorem}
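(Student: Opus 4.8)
The plan is to prove the two implications separately. Throughout I fix one nega-$D$-representation $x=\Delta^{-D}_{\varepsilon_1\varepsilon_2\dots\varepsilon_n\dots}$ of the number in question (this choice must be pinned down at the start, because $\hat\varphi^{k}$ depends on the first $k$ digits) and I abbreviate $\delta_k=d_1d_2\cdots d_k$. The two facts I shall lean on are the earlier lemma giving
$$
\hat\varphi^{k}(x)=(-1)^{k}\delta_k x+(-1)^{k+1}\delta_k\Delta^{-D}_{\varepsilon_1\varepsilon_2\dots\varepsilon_k(0)},
$$
and the observation, obtained from the convergence lemma applied to the shifted sequence $(d_{k+1},d_{k+2},\dots)$, that $\hat\varphi^{k}(x)=\Delta^{-D_k}_{\varepsilon_{k+1}\varepsilon_{k+2}\dots}$ always lies in an interval of length $1$ contained in $[-1;1]$, whose position may vary with $k$ but whose length does not.

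For necessity I would begin with $x=\frac{p}{q}$, where $p\in\mathbb Z$ and $q\in\mathbb N$, and note that $\delta_k\Delta^{-D}_{\varepsilon_1\varepsilon_2\dots\varepsilon_k(0)}=\sum_{i=1}^{k}(-1)^{i}\varepsilon_i d_{i+1}\cdots d_{k}\in\mathbb Z$. Substituting into the identity above yields $\hat\varphi^{k}(x)=\frac{a_k}{q}$ for a suitable integer $a_k$. Since every $\hat\varphi^{k}(x)$ lies in an interval of length $1$ inside $[-1;1]$, we get $|a_k|\le q$ for all $k$, so the sequence $\bigl(\hat\varphi^{k}(x)\bigr)_{k\ge0}$ takes at most $2q+1$ distinct values; the pigeonhole principle then produces $k\in\mathbb Z_0$ and $t\in\mathbb N$ with $k<t$ and $\hat\varphi^{k}(x)=\hat\varphi^{t}(x)$.

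For sufficiency, given $\hat\varphi^{k}(x)=\hat\varphi^{t}(x)$ I may assume $k<t$ (otherwise interchange the roles of $k$ and $t$) and set $c=t-k\in\mathbb N$. Applying the earlier lemma which recovers $x$ from a coincidence $\hat\varphi^{k}(x)=\hat\varphi^{k+c}(x)$, I obtain
$$
x=\frac{\Delta^{-D}_{\varepsilon_1\dots\varepsilon_k(0)}+(-1)^{c+1}d_{k+1}\cdots d_{k+c}\,\Delta^{-D}_{\varepsilon_1\dots\varepsilon_{k+c}(0)}}{1+(-1)^{c+1}d_{k+1}\cdots d_{k+c}}.
$$
Both numbers $\Delta^{-D}_{\varepsilon_1\dots\varepsilon_k(0)}$ and $\Delta^{-D}_{\varepsilon_1\dots\varepsilon_{k+c}(0)}$ are finite partial sums of \eqref{nega-cs-def1}, hence rational, and the denominator is a nonzero integer, since each $d_j\ge2$ forces $d_{k+1}\cdots d_{k+c}\ge2$, so $1+(-1)^{c+1}d_{k+1}\cdots d_{k+c}$ equals $1+d_{k+1}\cdots d_{k+c}\ge3$ when $c$ is odd and $1-d_{k+1}\cdots d_{k+c}\le-1$ when $c$ is even. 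Therefore $x$ is rational.

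The sufficiency half is essentially immediate from the quoted lemma, and the routine points (integrality of $\delta_k\Delta^{-D}_{\varepsilon_1\dots\varepsilon_k(0)}$ and nonvanishing of the denominator) present no difficulty. The one step that needs genuine care is in the necessity half: I must make sure the bound on $|a_k|$ is truly uniform in $k$, which is exactly what the length-$1$ localization of $\hat\varphi^{k}(x)$ supplies, and I should state explicitly which of the (at most two) nega-$D$-representations is being iterated. I would also remark that this theorem is equivalent to the preceding characterization of rational numbers with finite expansions, in the same spirit as the corresponding results of \cite{S13}.
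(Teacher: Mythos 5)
Your argument is correct. Note that the paper itself gives no proof of this theorem: it only remarks that, analogously to \cite{S13}, ``the main propositions can be formulated'', so there is nothing to compare against line by line. What you write is evidently the intended argument, since it uses exactly the two lemmas of the shift-operator section that the paper sets up for this purpose: the identity $\hat\varphi^{k}(x)=(-1)^{k}d_1\cdots d_k\,x+(-1)^{k+1}d_1\cdots d_k\,\Delta^{-D}_{\varepsilon_1\dots\varepsilon_k(0)}$ for necessity (giving $\hat\varphi^{k}(x)=a_k/q$ with $|a_k|\le q$, hence a repetition by pigeonhole), and the lemma recovering $x$ from $\hat\varphi^{m}(x)=\hat\varphi^{m+c}(x)$ for sufficiency. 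Your two points of care are well placed: the uniform bound $\hat\varphi^{k}(x)\in[a^{(k)}_0-1;a^{(k)}_0]\subset[-1;1]$ does follow from the convergence lemma applied to the tail sequence, and fixing one representation at the outset is necessary since $\hat\varphi^{k}$ is defined on representations, not on numbers. One reading issue worth stating explicitly: as literally written the theorem allows $k=t$, which makes the condition vacuously true for every $x$; your proof silently (and correctly) interprets the hypothesis as $k\ne t$, and you should say so. A second, cosmetic mismatch inherited from the paper is that the theorem speaks of $x_0\in[0;1]$ while $\hat\varphi$ acts on $\mathcal F^{-D}_{[-1+a_0;a_0]}$; your choice to work throughout with the nega-$D$-representation on $[a_0-1;a_0]$ is the only coherent reading.
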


\begin{theorem}
A number  $x_0 \in (-1+a_0;a_0)$ is rational iff for  its nega-D-representation $\Delta^{-D} _{\varepsilon_1\varepsilon_2...\varepsilon_n... }$ there exist $k \in \mathbb Z_0$ and $t \in \mathbb N$ $(k<t)$ such that
$$
\Delta^{-D} _{\underbrace{0...0}_{k}\varepsilon_{k+1}\varepsilon_{k+2}\varepsilon_{k+3}...}={(-1)^{t-k}d_{k+1}d_{k+2}...d_t}\Delta^{-D} _{\underbrace{0...0}_{t}\varepsilon_{t+1}\varepsilon_{t+2}\varepsilon_{t+3}...}.
$$
\end{theorem}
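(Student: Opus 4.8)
The plan is to obtain this statement directly from the preceding theorem (the characterisation of rationality of $x_0$ through the existence of $k\in\mathbb Z_0$ and $t\in\mathbb N$ with $\hat\varphi^{k}(x_0)=\hat\varphi^{t}(x_0)$), by substituting into that criterion the closed form of the iterated shift operator recorded earlier,
$$
\hat\varphi^{n}\bigl(\Delta^{-D}_{\varepsilon_1\varepsilon_2\dots}\bigr)=(-1)^{n}d_1d_2\cdots d_n\,\Delta^{-D}_{\underbrace{0\dots0}_{n}\varepsilon_{n+1}\varepsilon_{n+2}\dots}.
$$
First I would observe that one may assume $k<t$: when $k=t$ the relation $\hat\varphi^{k}(x_0)=\hat\varphi^{t}(x_0)$ holds for every $x_0$ and carries no information, and otherwise the two indices can be relabelled so that the smaller is called $k$; then all products $d_{k+1}\cdots d_t$ appearing in the statement are well defined.

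Next I would write the equality $\hat\varphi^{k}(x_0)=\hat\varphi^{t}(x_0)$ out with the help of the displayed formula, obtaining
$$
(-1)^{k}d_1\cdots d_k\,\Delta^{-D}_{\underbrace{0\dots0}_{k}\varepsilon_{k+1}\varepsilon_{k+2}\dots}=(-1)^{t}d_1\cdots d_t\,\Delta^{-D}_{\underbrace{0\dots0}_{t}\varepsilon_{t+1}\varepsilon_{t+2}\dots}.
$$
The number $(-1)^{k}d_1\cdots d_k$ is nonzero because every $d_i\ge 2$, so dividing both sides by it and using $\frac{d_1\cdots d_t}{d_1\cdots d_k}=d_{k+1}\cdots d_t$ produces exactly the relation in the theorem; conversely, multiplying that relation by $(-1)^{k}d_1\cdots d_k$ returns $\hat\varphi^{k}(x_0)=\hat\varphi^{t}(x_0)$. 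Hence the two conditions on $x_0$ are equivalent, and the theorem follows from the preceding one. The open interval $(-1+a_0;a_0)$ is used so that $x_0$ has a single nega-D-representation; the endpoints $a_0-1$ and $a_0$ are nega-D-rational and possess two representations, which would make the phrase ``its nega-D-representation'' ambiguous.

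I do not expect a genuine obstacle, since the statement is in essence a reformulation of the previous theorem; the only steps needing care are checking that the multiplier $(-1)^{k}d_1\cdots d_k$ is nonzero and --- if one prefers an argument independent of the preceding theorem --- checking that the denominator $1-(-1)^{t-k}d_{k+1}\cdots d_t$ that arises when the displayed relation is solved for $x_0$ never vanishes, which again follows from $d_i\ge 2$ (so $d_{k+1}\cdots d_t\ge 2$, hence $(-1)^{t-k}d_{k+1}\cdots d_t\neq 1$). For such a self-contained proof one argues as in \cite{S13}: writing $x_0=\frac{p}{q}$, the value $\hat\varphi^{n}(x_0)=(-1)^{n}d_1\cdots d_n\bigl(x_0-\Delta^{-D}_{\varepsilon_1\dots\varepsilon_n(0)}\bigr)$ is rational with denominator dividing $q$ and lies in a fixed bounded set, so two of these iterates coincide by the pigeonhole principle; conversely, coincidence of two such iterates is a linear equation for $x_0$ with rational coefficients and nonvanishing leading coefficient, forcing $x_0\in\mathbb Q$.
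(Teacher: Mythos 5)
Your proof is correct and follows exactly the route the paper intends: the paper states this theorem as one of three ``equivalent'' propositions and gives no explicit argument, relying implicitly on the identity $\hat\varphi^{n}(x)=(-1)^{n}d_1\cdots d_n\,\Delta^{-D}_{\underbrace{0\dots0}_{n}\varepsilon_{n+1}\dots}$ from the shift-operator section, which is precisely what you use. Your additional observations --- that $k=t$ must be excluded, that the leading coefficient $1-(-1)^{t-k}d_{k+1}\cdots d_t$ is nonzero because $d_i\ge 2$, and the pigeonhole argument making the chain self-contained --- supply details the paper omits but do not change the approach.
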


\begin{theorem}
A number  $x$ is rational iff the sequence  $(\hat \varphi^{k}(x))$, where $k=0,1,2,...$, has  at least  two identical terms 
of the sequence.
\end{theorem}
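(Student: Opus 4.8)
The plan is to read off this statement from the two preceding theorems, but to present the argument directly so that it is self-contained for $x\in[-1+a_0;a_0]$ (the only $x$ for which the orbit $(\hat\varphi^{k}(x))_{k\ge0}$ is defined). First note that ``the sequence $(\hat\varphi^{k}(x))$ has at least two identical terms'' means precisely that there are indices $k<t$ with $\hat\varphi^{k}(x)=\hat\varphi^{t}(x)$; so what has to be shown is that $x$ is rational if and only if such a coincidence occurs in the $\hat\varphi$-orbit of $x$. The two ingredients I would use are the identity
$$
\hat\varphi^{n}(x)=(-1)^{n}d_1d_2\cdots d_n\,\Delta^{-D}_{\underbrace{0\cdots0}_{n}\varepsilon_{n+1}\varepsilon_{n+2}\cdots}=(-1)^{n}d_1d_2\cdots d_n\,r_n
$$
for the residual $r_n$ of the series (already recorded for $\hat\varphi^{n}$ above), together with the Lemma expressing $x$ in closed form when $\hat\varphi^{m}(x)=\hat\varphi^{m+c}(x)$.

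For the ``if'' part, assume $\hat\varphi^{k}(x)=\hat\varphi^{t}(x)$ with $k<t$ and put $c=t-k\ge1$. The quoted Lemma gives
$$
x=\frac{\Delta^{-D}_{\varepsilon_1\varepsilon_2\cdots\varepsilon_{k}(0)}+(-1)^{c+1}d_{k+1}d_{k+2}\cdots d_{k+c}\,\Delta^{-D}_{\varepsilon_1\varepsilon_2\cdots\varepsilon_{k+c}(0)}}{1+(-1)^{c+1}d_{k+1}d_{k+2}\cdots d_{k+c}}.
$$
Both $\Delta^{-D}_{\varepsilon_1\cdots\varepsilon_{k}(0)}$ and $\Delta^{-D}_{\varepsilon_1\cdots\varepsilon_{k+c}(0)}$ are finite sums of rationals, hence rational; $d_{k+1}\cdots d_{k+c}$ is an integer $\ge2$; and the denominator never vanishes, since it equals $1+d_{k+1}\cdots d_{k+c}\ge3$ if $c$ is odd and $1-d_{k+1}\cdots d_{k+c}\le-3$ if $c$ is even. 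Hence $x\in\mathbb Q$.

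For the ``only if'' part, let $x=p/q$. Put $g_n=\sum_{i=1}^{n}\frac{(-1)^{i}\varepsilon_i}{d_1\cdots d_i}=\frac{A_n}{d_1\cdots d_n}$ with $A_n\in\mathbb Z$; then $r_n=x-g_n$, and the identity above yields
$$
\hat\varphi^{n}(x)=(-1)^{n}d_1\cdots d_n\,(x-g_n)=(-1)^{n}\left(\frac{p\,d_1\cdots d_n}{q}-A_n\right)\in\frac1q\mathbb Z .
$$
On the other hand, by Lemma~2 (using $\hat\varphi^{n}(x)=(-1)^{n}d_1\cdots d_n r_n$ and the bounds on $r_n$ stated there) we get $\hat\varphi^{n}(x)\in[a_n-1;a_n]$, where $0<a_n<\frac1{d_{n+1}}\le\frac12$ because every $d_i\ge2$. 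Thus the whole orbit $\{\hat\varphi^{n}(x):n\ge0\}$ lies inside the fixed finite set $\frac1q\mathbb Z\cap[-1;\frac12]$, and by the pigeonhole principle there are indices $k<t$ with $\hat\varphi^{k}(x)=\hat\varphi^{t}(x)$, i.e. two identical terms of the sequence.

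The routine steps are the nonvanishing of the denominator in the Lemma's formula and the rationality of the finite truncations $\Delta^{-D}_{\varepsilon_1\cdots\varepsilon_k(0)}$. The one place that really needs care, and hence the main obstacle, is the ``only if'' direction: one must invoke the residual estimates (Lemma~2, using $d_n\ge2$) to confine the entire $\hat\varphi$-orbit of $x$ to a single bounded set independent of $n$, so that its intersection with $\frac1q\mathbb Z$ is finite and pigeonhole applies; the rest is bookkeeping already present in the earlier lemmas and theorems.
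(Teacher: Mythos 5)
Your proof is correct, and it uses exactly the machinery the paper sets up for this purpose: the identity $\hat\varphi^{n}(x)=(-1)^{n}d_1\cdots d_n\,r_n$ together with the residual bounds of Lemma~2 for the pigeonhole step, and the closed-form lemma for $\hat\varphi^{m}(x)=\hat\varphi^{m+c}(x)$ for the converse. The paper itself states this theorem without an explicit proof (deferring to the analogy with the positive Cantor series in \cite{S13}), so your argument is a complete written-out version of the intended one, including the two details worth checking — the nonvanishing of the denominator $1+(-1)^{c+1}d_{k+1}\cdots d_{k+c}$ and the confinement of the orbit to $\tfrac1q\mathbb Z\cap[-1;\tfrac12]$.
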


\section{Geometry and foundation of a metric theory of the nega-D-representation}

Let $(d_n)$ be a fixed sequence of positive integer numbers $d_n \ge 1$. 

Let $c_1, c_2,...,c_m$ be an ordered tuple of integer numbers such that $c_i~\in~\{0,1,...,d_i-~1\}$ for $i=~\overline{1,m}$.

\begin{definition} 
{\itshape Nega-D-cylinder of rank $m$ with the base $c_1c_2...c_m$} is a set $\Delta^{-D} _{c_1c_2...c_m}$ of all numbers from $[-1+a_0;a_0]$ such that  the first $m$ digits of the representation of the numbers are equal to $c_1,c_2,...,c_m$ respectively.
\end{definition}

\begin{lemma}
Nega-D-cylinder  is a  closed interval and 
$$
\Delta^{-D} _{c_1c_2...c_m}=\left[g_m+\frac{(-1)^m}{d_1d_2...d_m}(a_m-1); g_m+\frac{(-1)^m}{d_1d_2...d_m}a_m\right]
$$
for an even number $m$  and 
$$
\Delta^{-D} _{c_1c_2...c_m}=\left[g_m+\frac{(-1)^m}{d_1d_2...d_m}a_m; g_m+\frac{(-1)^m}{d_1d_2...d_m}(a_m-1)\right]
$$
for an odd number $m$, where
$$
a_m=\sup\sum^{\infty} _{j=1} {\frac{(-1)^j\varepsilon_{m+j}}{d_{m+1}d_{m+2}...d_{m+j}}}, ~g_m=\sum^{m} _{i=1} {\frac{(-1)^ic_i}{d_1d_2...d_i}}.
$$
\end{lemma}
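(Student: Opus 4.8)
The plan is to reduce the statement to the residual estimates of Lemma~2 together with the surjectivity provided by Lemma~\ref{nega-cs-lem1}. First I would fix an admissible base $c_1c_2\dots c_m$ and observe that a number $x$ lies in $\Delta^{-D}_{c_1\dots c_m}$ precisely when it admits a nega-D-representation of the form $x=\Delta^{-D}_{c_1\dots c_m\varepsilon_{m+1}\varepsilon_{m+2}\dots}$, that is, when
$$
x=g_m+r_m,\qquad r_m=\frac{(-1)^m}{d_1d_2\dots d_m}\sum_{k=1}^{\infty}\frac{(-1)^k\varepsilon_{m+k}}{d_{m+1}\dots d_{m+k}},
$$
for some choice of digits $\varepsilon_{m+k}\in A_{d_{m+k}}$. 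So it suffices to identify the set of possible values of the residual $r_m$. Note at this point that the quantity $a_m=\sup\sum_{j}\frac{(-1)^j\varepsilon_{m+j}}{d_{m+1}\dots d_{m+j}}$ in the statement coincides with $\sum_{k}\frac{(-1)^{k+1}}{d_{m+1}\dots d_{m+k}}$, the supremum being attained at $\varepsilon_{m+1}=0$, $\varepsilon_{m+2}=d_{m+2}-1$, $\varepsilon_{m+3}=0,\dots$, by the same telescoping identity used in Lemma~1; hence $a_m$ is exactly the quantity denoted $a_m$ in Lemma~2.

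For the forward inclusion I would invoke Lemma~2. For even $m$ it yields $\frac{a_m-1}{d_1\dots d_m}\le r_m\le\frac{a_m}{d_1\dots d_m}$, and since $(-1)^m=1$ this places $x=g_m+r_m$ between $g_m+\frac{(-1)^m}{d_1\dots d_m}(a_m-1)$ and $g_m+\frac{(-1)^m}{d_1\dots d_m}a_m$. For odd $m$ it yields $-\frac{a_m}{d_1\dots d_m}\le r_m\le\frac{1-a_m}{d_1\dots d_m}$; since $(-1)^m=-1$ one checks $\frac{(-1)^m}{d_1\dots d_m}a_m=-\frac{a_m}{d_1\dots d_m}$ and $\frac{(-1)^m}{d_1\dots d_m}(a_m-1)=\frac{1-a_m}{d_1\dots d_m}$, so again $x$ lies between the two stated endpoints, now listed in reversed order. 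Thus $\Delta^{-D}_{c_1\dots c_m}$ is contained in the asserted interval.

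For the reverse inclusion I would observe that the inner sum $\sum_{k=1}^{\infty}\frac{(-1)^k\varepsilon_{m+k}}{d_{m+1}\dots d_{m+k}}$ is itself a nega-D-representation $\Delta^{-D_m}_{\varepsilon_{m+1}\varepsilon_{m+2}\dots}$ for the shifted sequence of elements $(d_{m+1},d_{m+2},\dots)$, which is again a sequence in $\mathbb N\setminus\{1\}$. Hence Lemma~\ref{nega-cs-lem1}, applied to this shifted sequence (for which the corresponding constant ``$a_0$'' is precisely $a_m$), shows that as the digits $\varepsilon_{m+k}$ range over all admissible tuples this sum attains every value of the closed interval $[a_m-1;a_m]$. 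Consequently $r_m$ attains every value of $\frac{(-1)^m}{d_1\dots d_m}\,[a_m-1;a_m]$, so $\Delta^{-D}_{c_1\dots c_m}=g_m+\frac{(-1)^m}{d_1\dots d_m}\,[a_m-1;a_m]$. Multiplying $[a_m-1;a_m]$ by the positive scalar $1/(d_1\dots d_m)$ and by the sign $(-1)^m$ (which simply interchanges the two endpoints when $m$ is odd) and translating by $g_m$ gives exactly the two closed intervals in the statement; in particular $\Delta^{-D}_{c_1\dots c_m}$ is a closed interval.

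The sign bookkeeping and the identification of $a_m$ are routine; the only point that needs a little care is the legitimacy of applying Lemma~\ref{nega-cs-lem1} to the shifted sequence and recording that its ``$a_0$'' is $a_m$, together with the convention that membership in the cylinder is read as ``admits some nega-D-representation with the prescribed initial digits'', so that the common endpoints of adjacent cylinders are correctly counted — which is consistent with each cylinder being a closed interval. I expect this endpoint/surjectivity argument (rather than the inclusion coming from Lemma~2) to be the main thing to get right.
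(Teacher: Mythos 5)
Your proposal is correct and follows essentially the same route as the paper: both compute the extreme values of the residual $r_m$ (your forward inclusion via Lemma~2 is exactly the paper's direct estimate $x'\le x\le x''$), and both identify the endpoints with $g_m+\frac{(-1)^m}{d_1\dots d_m}(a_m-1)$ and $g_m+\frac{(-1)^m}{d_1\dots d_m}a_m$ via the telescoping identity for $a_m$. The one substantive difference is that the paper only verifies that the two endpoints $x'$ and $x''$ are themselves attained by admissible digit choices, and stops there; it never argues that every interior point of $[x';x'']$ lies in the cylinder. Your reverse inclusion — applying Lemma~\ref{nega-cs-lem1} to the shifted sequence $(d_{m+1},d_{m+2},\dots)$, whose corresponding constant ``$a_0$'' is precisely $a_m$, so that the tail sum sweeps out all of $[a_m-1;a_m]$ — supplies exactly the missing surjectivity step, and is the right way to justify that the cylinder equals (rather than merely sits inside, with endpoints attained) the stated closed interval. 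Your remark about reading membership in the cylinder as ``admits some representation with the prescribed initial digits'' is also the correct convention for the endpoints to be counted consistently.
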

\begin{proof}
Let  $m$ be an even number and  $x \in \Delta^{-D} _{c_1c_2...c_m}$, i. e. 
$$
x=\sum^{m} _{i=1} {\frac{(-1)^ic_i}{d_1d_2...d_i}}+\sum^{\infty} _{j=m+1} {\frac{(-1)^j\varepsilon_j}{d_1d_2...d_j}}, 
$$
where $ \varepsilon_j \in \{0,1,...,d_j-1\}$.
 Then
$$
x^{'}=g_m-\sum^{\infty} _{k=1} {\frac{d_{m+2k-1}-1}{d_1d_2...d_{m+2k-1}}}\le x\le g_m+\sum^{\infty} _{k=1} {\frac{d_{m+2k}-1}{d_1d_2...d_{m+2k}}}=x^{''}.
$$
Hence,  $x \in [x^{'};x^{''}]$ and $\Delta^{-D} _{c_1c_2...c_m}\subseteq [x^{'};x^{''}]$.

 $x \in \Delta^{-D} _{c_1c_2...c_m}$ and $x^{'} \in \Delta^{-D} _{c_1c_2...c_m} \ni x^{''}$ because
$$
\sum^{\infty} _{j=1} {\frac{d_{m+2j}-1}{d_1d_2...d_{m+2j}}}=\frac{1}{d_1d_2...d_m}\sup\sum^{\infty} _{j=1} {\frac{(-1)^j\varepsilon_{m+j}}{d_{m+1}d_{m+2}...d_{m+j}}},
$$
$$
-\sum^{\infty} _{j=1} {\frac{d_{m+2j-1}-1}{d_1d_2...d_{m+2j-1}}}=\frac{1}{d_1d_2...d_m}\inf\sum^{\infty} _{j=1} {\frac{(-1)^j\varepsilon_{m+j}}{d_{m+1}d_{m+2}...d_{m+j}}}.
$$
\end{proof}

\begin{lemma}
Cylinders $\Delta^{-D} _{c_1c_2...c_m}$ have the following properties.  
\begin{enumerate}
\item
$$
\inf \Delta^{-D} _{c_1c_2...c_m}=\begin{cases}
g_m-\frac{1}{d_1d_2...d_m}\sum^{\infty} _{j=1} {\frac{d_{m+2j-1}-1}{d_{m+1}d_{m+2}...d_{m+2j-1}}},&\text{ $m$ is an even,}\\
\\
g_m-\frac{1}{d_1d_2...d_m}\sum^{\infty} _{j=1} {\frac{d_{m+2j}-1}{d_{m+1}d_{m+2}...d_{m+2j}}},&\text{ $m$ is an odd.}
\end{cases}
$$
\item
$$
\sup \Delta^{-D} _{c_1c_2...c_m}=\begin{cases}
g_m+\frac{1}{d_1d_2...d_m}\sum^{\infty} _{j=1} {\frac{d_{m+2j}-1}{d_{m+1}d_{m+2}...d_{m+2j}}},&\text{ $m$ is an even;}\\
\\
g_m+\frac{1}{d_1d_2...d_m}\sum^{\infty} _{j=1} {\frac{d_{m+2j-1}-1}{d_{m+1}d_{m+2}...d_{m+2j-1}}},&\text{$m$ is an odd.}
\end{cases}
$$
\item
$$
| \Delta^{-D} _{c_1c_2...c_m}|=\frac{1}{d_1d_2...d_m}.
$$
\item
$$
 \Delta^{-D} _{c_1c_2...c_mc}\subset  \Delta^{-D} _{c_1c_2...c_m}.
$$
\item
$$
 \Delta^{-D} _{c_1c_2...c_m}=\bigcup^{d_{m+1}-1} _{c=0} { \Delta^{-D} _{c_1c_2...c_mc}}.
$$
\item
$$
\lim_{m \to \infty} { |\Delta^{-D} _{c_1c_2...c_m}|}=0.
$$
\item
$$
\frac{| \Delta^{-D} _{c_1c_2...c_mc_{m+1}}|}{| \Delta^{-D} _{c_1c_2...c_m}|}=\frac{1}{d_{m+1}}.
$$
\item
$$
\begin{cases}
\sup \Delta^{-D} _{c_1c_2...c_mc}=\inf  \Delta^{-D} _{c_1c_2...c_m[c+1]},&\text{if $m$ is an odd,}\\
\sup  \Delta^{-D} _{c_1c_2...c_m[c+1]}=\inf\Delta^{-D} _{c_1c_2...c_mc},&\text{if $m$ is an even, }
\end{cases}
$$
where $c \ne d_{m+1}-1$.
\item
$$
 \Delta^{-D} _{c_1c_2...c_m}\cap  \Delta^{-D} _{e_1e_2...e_m}=\begin{cases}
\Delta^{-D} _{c_1c_2...c_m},&\text{if $e_i=c_i ~ i=\overline{1,m}$;}\\
\varnothing,&\text{if $\exists i$, $i<m$, that $c_i \ne e_i$;}\\
\varnothing,&\text{if $\exists i$ that $c_i \ne e_i$, $c_m\ne e_m-1$,}
\end{cases}
$$
where $e_m\ne 0$  in the last case.
\item
$$
\bigcap^{\infty} _{m=1} {\Delta^{-D} _{c_1c_2...c_m}}=x \equiv \Delta^{-D} _{c_1c_2...c_m...}.
$$
\end{enumerate}
\end{lemma}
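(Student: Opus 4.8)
The plan is to read off every item from the preceding lemma, which identifies each $\Delta^{-D}_{c_1c_2\dots c_m}$ with an explicit closed interval with endpoints expressed through $g_m$ and $a_m$, feeding into it two elementary facts: the ordering criterion of Proposition \ref{nega-cs-pro1} and the telescoping identity
$$
\sum_{k=1}^{\infty}\frac{d_{m+k}-1}{d_{m+1}d_{m+2}\cdots d_{m+k}}=1,
$$
obtained by writing the $k$-th summand as $\frac{1}{d_{m+1}\cdots d_{m+k-1}}-\frac{1}{d_{m+1}\cdots d_{m+k}}$ and letting $k\to\infty$, the tail vanishing since $d_n\ge 2$. First I would note that $a_m=\sum_{i\ge1}\frac{d_{m+2i}-1}{d_{m+1}\cdots d_{m+2i}}$ — the supremum of the alternating tail is attained by making the positive (even-indexed) terms maximal and the negative (odd-indexed) terms zero — and hence, by the telescoping identity, $a_m-1=-\sum_{i\ge1}\frac{d_{m+2i-1}-1}{d_{m+1}\cdots d_{m+2i-1}}$. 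Substituting these closed forms into the interval of the preceding lemma, treating even and odd $m$ separately (so that $(-1)^m$ is $+1$ for even $m$ and $-1$ for odd $m$), yields items (1) and (2); item (3) is then immediate, since $\sup\Delta^{-D}_{c_1\dots c_m}-\inf\Delta^{-D}_{c_1\dots c_m}=\frac{1}{d_1\cdots d_m}\left(a_m-(a_m-1)\right)=\frac{1}{d_1\cdots d_m}$.

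Items (4) and (5) are read off from the definition of a cylinder: a number whose first $m+1$ digits are $c_1,\dots,c_m,c$ certainly has first $m$ digits $c_1,\dots,c_m$, and every $x\in\Delta^{-D}_{c_1\dots c_m}$ has some $(m+1)$-st digit $c\in\{0,\dots,d_{m+1}-1\}$. Item (6) follows from $d_1\cdots d_m\ge 2^m\to\infty$, and item (7) from item (3). For item (8) I would use that the sign of the $(m+1)$-st term is $(-1)^{m+1}$: when $m$ is odd this sign is positive, so raising the $(m+1)$-st digit from $c$ to $c+1$ shifts the subcylinder to the right, and the equality $\sup\Delta^{-D}_{c_1\dots c_mc}=\inf\Delta^{-D}_{c_1\dots c_m[c+1]}$ drops out of the closed forms of items (1)--(2) applied at rank $m+1$ after one more use of the telescoping identity; when $m$ is even the sign is negative and the roles of $\sup$/$\inf$ and of $c$/$c+1$ are interchanged, which is exactly the second line of (8).

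Items (9) and (10) close the argument. For (9), Proposition \ref{nega-cs-pro1} shows that two rank-$m$ cylinders with distinct bases have disjoint interiors, while Lemma \ref{nega-cs-lem2} identifies precisely which endpoint — a nega-D-rational number carrying two representations — two of them can have in common; sorting out when this occurs gives the stated trichotomy. For (10), items (4) and (6) say that $\{\Delta^{-D}_{c_1\dots c_m}\}_m$ is a nested sequence of closed intervals whose lengths tend to $0$, so by the nested-interval theorem their intersection is a single point, and that point is $\Delta^{-D}_{c_1c_2\dots}$, which lies in every $\Delta^{-D}_{c_1\dots c_m}$ by construction. I expect item (9) to be the main obstacle: it is the only place where the bookkeeping of the two representations of a nega-D-rational number (Lemma \ref{nega-cs-lem2}) must be reconciled with the order structure to decide exactly when two cylinders touch; items (1)--(8) and (10) are routine once the telescoping identity and the parity case split are in place.
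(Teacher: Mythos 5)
Your proposal is correct and follows essentially the same route as the paper: the endpoint formulas of the preceding lemma together with the telescoping identity $\sum_{k\ge1}\frac{d_{m+k}-1}{d_{m+1}\cdots d_{m+k}}=1$ give items (1)--(3) and (6)--(8), item (9) is assembled from the ordering facts, and the nested-interval (Cantor intersection) theorem gives (10). The only notable difference is item (4), which you obtain set-theoretically from the definition of a cylinder while the paper verifies the inclusion numerically by computing $\inf\Delta^{-D}_{c_1\dots c_m c}-\inf\Delta^{-D}_{c_1\dots c_m}=\frac{d_{m+1}-1-c}{d_1\cdots d_{m+1}}\ge 0$ and the analogous supremum difference; both arguments are valid and yours is the more economical.
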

\begin{proof}
{\itshape Properties 1 and 2} follow immediately from \ the definition \ of \ $\Delta^{-D} _{c_1c_2...c_m}$. {\itshape The Property 3} is a corollary of these properties. {\itshape Properties  6 and 7} follow from the property 3. 

 {\itshape Property 4}. Let $m$  be an even positive integer number. It is proved that 
$$
\left\{
\begin{aligned}
\inf\Delta^{-D} _{c_1c_2...c_mc}&\ge\inf\Delta^{-D} _{c_1c_2...c_m},\\
\sup\Delta^{-D} _{c_1c_2...c_mc} & \le \sup\Delta^{-D} _{c_1c_2...c_m}.\\
\end{aligned}
\right.
$$
Really, $\inf\Delta^{-D} _{c_1c_2...c_mc}-\inf\Delta^{-D} _{c_1c_2...c_m}=g_m+$
$$
+\frac{(-1)^{m+1}c}{d_1d_2...d_{m+1}}+\frac{(-1)^{m+1}}{d_1d_2...d_{m+1}}\left(\frac{d_{m+3}-1}{d_{m+2}d_{m+3}}+\frac{d_{m+5}-1}{d_{m+2}...d_{m+5}}+...\right)-g_m-
$$
$$
-\frac{(-1)^m}{d_1d_2...d_m}\left(-\frac{d_{m+1}-1}{d_{m+1}}-\frac{d_{m+3}-1}{d_{m+1}d_{m+2}d_{m+3}}-\frac{d_{m+5}-1}{d_{m+1}...d_{m+5}}-...\right)=
$$
$$
=\frac{d_{m+1}-1-c}{d_1d_2...d_{m+1}}\ge 0, ~\mbox{where the inequality is an equality in }
$$
the  case of $c=d_{m+1}-1$.
$$
\sup\Delta^{-D} _{c_1...c_m}-\sup\Delta^{-D} _{c_1...c_mc}=\frac{(-1)^m}{d_1d_2...d_m}\left(\frac{d_{m+2}-1}{d_{m+1}d_{m+2}}+\frac{d_{m+4}-1}{d_{m+1}...d_{m+4}}+...\right)-
$$
$$
+g_m-g_m-\frac{(-1)^{m+1}c}{d_1...d_{m+1}}-\frac{(-1)^{m+1}}{d_1...d_{m+1}}\left(-\frac{d_{m+2}-1}{d_{m+2}}-\frac{d_{m+4}-1}{d_{m+2}...d_{m+4}}-...\right)=
$$
$$
=\frac{c}{d_1d_2...d_{m+1}}\ge 0, ~\mbox{ where the inequality is an equality in}
$$
 the case of $c=0$.

Similarly,  the last-mentioned inequality system is true  in the  case of an odd $m$.  

{\itshape The Property 5} follows from {\itshape the Property 4} and the definition of $\Delta^{-D} _{c_1c_2...c_m}$.

{\itshape Property 8}. In fact,  for an odd  $m$
$$
\sup \Delta^{-D} _{c_1c_2...c_mc}-\inf \Delta^{-D} _{c_1c_2...c_m[c+1]}=\frac{(-1)^{m+1}c}{d_1d_2...d_{m+1}}+\frac{(-1)^{m+1}}{d_1d_2...d_{m+1}}a_{m+1}-
$$
$$
-\frac{c+1}{d_1d_2...d_{m+1}}(-1)^{m+1}-\frac{(-1)^{m+1}}{d_1d_2...d_{m+1}}(a_{m+1}-1)=0.
$$

For an even $m$ 
$$
\sup \Delta^{-D} _{c_1c_2...c_m[c+1]}-\inf \Delta^{-D} _{c_1c_2...c_mc}=\frac{(-1)^{m+1}}{d_1d_2...d_{m+1}}(c+1)+
$$
$$
+\frac{(-1)^{m+1}}{d_1d_2...d_{m+1}}(a_{m+1}-1)-\frac{(-1)^{m+1}c}{d_1d_2...d_{m+1}}-\frac{(-1)^{m+1}}{d_1d_2...d_{m+1}}a_{m+1}=0.
$$

{\itshape The property 9} follows from  {\itshape properties 1, 2 and 8}.

{\itshape Property 10}. It is easy to see from {\itshape the property 4} that 
$$
\Delta^{-D} _{c_1} \subset \Delta^{-D} _{c_1c_2} \subset \Delta^{-D} _{c_1c_2c_3} \subset ... \Delta^{-D} _{c_1c_2...c_n} \subset ...,
$$
and from the last lemma and the Cantor's intersection theorem,  it follows that 
$$
\bigcap^{\infty} _{n=1} {\Delta^{-D} _{c_1c_2...c_n}}=x \equiv \Delta^{-D} _{c_1c_2...c_n...}.
$$ 
\end{proof}

\section{Simplest metric problems}

Let us  consider a set $\Delta^k _c$, that its elements have a fixed digit  $c \in A_{d_k}$ on  position $k$ of their nega-D-representations $\Delta^{-D} _{\varepsilon_1\varepsilon_2...\varepsilon_n...}$.

\begin{lemma}
The set  $\Delta^k _c$ $(k>1)$  is a union of cylinders of rank $k$. 
\end{lemma}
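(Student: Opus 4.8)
The plan is to exhibit $\Delta^k_c$ explicitly as the finite union of all rank-$k$ cylinders whose base ends in the digit $c$. Concretely, I would claim
$$
\Delta^k_c=\bigcup \Delta^{-D}_{c_1c_2\ldots c_{k-1}c},
$$
where the union runs over all ordered tuples $(c_1,c_2,\ldots,c_{k-1})$ with $c_i\in A_{d_i}$ for $i=\overline{1,k-1}$. Since there are exactly $d_1d_2\cdots d_{k-1}$ such tuples, this is a finite union of cylinders of rank $k$, which is precisely the assertion of the lemma.

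To verify the equality I would check both inclusions. For $\supseteq$: by the definition of a nega-D-cylinder every $x\in\Delta^{-D}_{c_1c_2\ldots c_{k-1}c}$ has its first $k$ digits equal to $c_1,\ldots,c_{k-1},c$, so in particular its $k$-th digit is $c$ and $x\in\Delta^k_c$. For $\subseteq$: take $x\in\Delta^k_c$, let $\varepsilon_1(x),\ldots,\varepsilon_{k-1}(x)$ be its first $k-1$ digits; this is an admissible tuple, hence $x\in\Delta^{-D}_{\varepsilon_1(x)\ldots\varepsilon_{k-1}(x)c}$, one of the cylinders appearing in the union. Here the cylinder description from the preceding lemma (a rank-$m$ cylinder being the closed interval $[g_m+\tfrac{(-1)^m}{d_1\ldots d_m}(a_m-1);\,g_m+\tfrac{(-1)^m}{d_1\ldots d_m}a_m]$, or the mirrored version for odd $m$) together with Proposition \ref{nega-cs-pro1} confirms that "membership in $\Delta^{-D}_{c_1\ldots c_k}$'' really is the same as "the first $k$ digits are $c_1,\ldots,c_k$''.

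The one delicate point, which I expect to be the main obstacle, is that a nega-D-rational number carries two nega-D-representations, so "the $k$-th digit of $x$ equals $c$'' is a priori ambiguous. I would resolve this by adopting the normalization fixed in the Remark following the uniqueness theorem — discarding the representation with $\varepsilon_{m+2i-1}=0$, $\varepsilon_{m+2i}=d_{m+2i}-1$ — so that $\varepsilon_k(x)$ becomes a genuine function of $x$; with this convention the argument above goes through verbatim. Alternatively, one may retain both representations and observe that the closed cylinders $\Delta^{-D}_{c_1\ldots c_{k-1}c}$ still cover $\Delta^k_c$, the only price being overlaps at the shared endpoints described in Property 8 of the cylinder lemma. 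In either case $\Delta^k_c$ is a (finite) union of cylinders of rank $k$, and once this representation-bookkeeping is settled the combinatorial statement itself is immediate.
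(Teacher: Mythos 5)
Your proposal is correct and coincides with the paper's own argument: the paper likewise writes $\Delta^k_c$ as the union of all rank-$k$ cylinders $\Delta^{-D}_{c_1c_2\ldots c_{k-1}c}$ over the $d_1d_2\cdots d_{k-1}$ admissible prefixes, merely displaying the decomposition for $k=2$ and general $k$ without spelling out the two inclusions. Your additional care about the two representations of nega-D-rational points is a reasonable refinement that the paper leaves implicit.
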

\begin{proof}
If $k=1$, then it is easy to see that $\Delta^k _c \equiv \Delta^{-D} _c$.

If $k=2$, then
$$
\Delta^2 _c =\Delta^{-D} _{0c} \cup \Delta^{-D} _{1c} \cup \Delta^{-D} _{2c} \cup...\cup \Delta^{-D} _{[d_1-1]c}.
$$

Let $k=n$, then
$$
\Delta^n _c =\Delta^{-D} _{\underbrace{00...00
}_{n-1}c} \cup \Delta^{-D} _{\underbrace{00...01}_{n-1}c} \cup...\cup \Delta^{-D} _{{[d_1-1][d_2-1]...[d_{n-1}-1]
}c}.
$$
\end{proof}

\begin{lemma}
The Lebesgue measure of  $\Delta^k _c$ is equal to $\frac{1}{d_k}$.
\end{lemma}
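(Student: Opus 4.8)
The plan is to reduce the statement to the previous lemma together with the measure and overlap properties of nega-D-cylinders established earlier.

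First I would invoke the previous lemma to write $\Delta^k_c$ as a finite union of rank-$k$ cylinders,
$$
\Delta^k_c = \bigcup \Delta^{-D} _{c_1c_2...c_{k-1}c},
$$
the union being taken over all ordered tuples $(c_1,c_2,...,c_{k-1})$ with $c_i \in A_{d_i}$, of which there are exactly $d_1d_2\cdots d_{k-1}$. For $k=1$ nothing is to prove, since $\Delta^1 _c = \Delta^{-D} _c$ and Property~3 of the cylinder lemma gives $|\Delta^{-D} _c| = \tfrac{1}{d_1}$.

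Next, by Property~3 of the cylinder lemma each cylinder occurring in this union is a closed interval of length
$$
|\Delta^{-D} _{c_1c_2...c_{k-1}c}| = \frac{1}{d_1d_2\cdots d_{k-1}d_k},
$$
hence has Lebesgue measure $\tfrac{1}{d_1d_2\cdots d_k}$. Then I would check that distinct cylinders in the union overlap in a null set: all of them carry the same digit $c$ in position $k$, so two distinct ones must differ in some earlier position $i<k$, and by Property~9 of the cylinder lemma their intersection is then empty — at worst a single shared endpoint (cf. Property~8) — in any case of Lebesgue measure zero.

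Finally, combining these facts with the finite additivity of Lebesgue measure modulo null sets yields
$$
\lambda(\Delta^k _c) = \sum \lambda\left(\Delta^{-D} _{c_1c_2...c_{k-1}c}\right) = d_1d_2\cdots d_{k-1}\cdot\frac{1}{d_1d_2\cdots d_k} = \frac{1}{d_k}.
$$
The only delicate point — and it is routine — is the measure-zero overlap claim; it rests entirely on the geometric picture of cylinders furnished by Properties~8 and~9, namely that two distinct cylinders of the same rank meet in at most one point.
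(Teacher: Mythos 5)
Your proof is correct and follows essentially the same route as the paper: decompose $\Delta^k_c$ into the $d_1d_2\cdots d_{k-1}$ rank-$k$ cylinders with final digit $c$, apply Property~3 for their common length $\tfrac{1}{d_1\cdots d_k}$, and sum. The paper's version simply writes this as an iterated sum of cylinder lengths without remarking on the (at most single-point) overlaps, which you address explicitly.
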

\begin{proof}
$$
\lambda(\Delta^k _c)=\sum^{d_1-1} _{c_1=0} ... \sum^{d_{k-1}-1} _{c_{k-1}=0} {|\Delta^{-D} _{c_1c_2...c_{k-1}c}|}=
$$
$$
=\frac{1}{d_k}\sum^{d_1-1} _{c_1=0} ... \sum^{d_{k-1}-1} _{c_{k-1}=0} {|\Delta^{-D} _{c_1c_2...c_{k-1}}|}=\frac{1}{d_k}.
$$
\end{proof}

\begin{corollary}
The Lebesgue measure of the set $\Delta^k _{\overline{c}}$ of numbers such that  its elements have a digit $\varepsilon_k \ne c$ on  position $k$ of their nega-D-representations $\Delta^{-D} _{\varepsilon_1\varepsilon_2...\varepsilon_n...}$ is equal to $1-\frac{1}{d_k}$.
\end{corollary}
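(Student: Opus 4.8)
The plan is to obtain this corollary as a one-line consequence of the preceding lemma (which gives $\lambda(\Delta^k _c)=\frac{1}{d_k}$) together with the normalization $|[a_0-1;a_0]|=1$ coming from the first Lemma of the paper. The guiding observation is that $\Delta^k _{\overline c}$ is, up to a countable exceptional set, the complement of $\Delta^k _c$ inside $[a_0-1;a_0]$.

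First I would record that every $x\in[a_0-1;a_0]$ admits at least one nega-D-representation (by Lemma \ref{nega-cs-lem1}), so its $k$th digit is either $c$ or different from $c$; hence $\Delta^k _c \cup \Delta^k _{\overline c}=[a_0-1;a_0]$. The only subtlety is that this union need not be disjoint: a nega-D-rational number has two representations, and a priori its $k$th digit could equal $c$ in one of them and differ from $c$ in the other. Thus $\Delta^k _c \cap \Delta^k _{\overline c}$ is contained in the set of nega-D-rational numbers, which by Lemma \ref{nega-cs-lem2} and the definition of nega-D-rational numbers is countable, and therefore of Lebesgue measure zero.

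Next I would apply finite additivity of Lebesgue measure in the form $\lambda(\Delta^k _c)+\lambda(\Delta^k _{\overline c})=\lambda(\Delta^k _c\cup\Delta^k _{\overline c})+\lambda(\Delta^k _c\cap\Delta^k _{\overline c})=1+0=1$, and then substitute $\lambda(\Delta^k _c)=\frac{1}{d_k}$ from the preceding lemma to conclude $\lambda(\Delta^k _{\overline c})=1-\frac{1}{d_k}$. An equivalent route, which avoids passing to the complement, is to write $\Delta^k _{\overline c}=\bigcup_{c'\in A_{d_k},\,c'\ne c}\Delta^k _{c'}$, apply the previous lemma to each of the $d_k-1$ pieces (each of measure $\frac{1}{d_k}$), and note once more that all pairwise intersections lie in the countable set of nega-D-rational numbers, so the measures add up to $\frac{d_k-1}{d_k}$. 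I expect the only step requiring any genuine care to be precisely this bookkeeping of the overlaps at nega-D-rational points; everything else is immediate from the lemma just proved.
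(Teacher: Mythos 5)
Your proof is correct and follows the same route the paper intends: the corollary is stated without proof as an immediate complementation of the preceding lemma, and your computation $\lambda(\Delta^k_{\overline{c}})=1-\lambda(\Delta^k_c)=1-\frac{1}{d_k}$ is exactly that. Your extra care in noting that the overlap $\Delta^k_c\cap\Delta^k_{\overline{c}}$ sits inside the countable (hence null) set of nega-D-rational numbers is a point the paper leaves implicit, and it is handled correctly.
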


\begin{lemma}
 Diameter of the  set $\Delta^k _c$ is calculated by the following formula 
$$
d(\Delta^k _c)=\frac{d_1d_2...d_k-d_k+1}{d_1d_2...d_k}.
$$
\end{lemma}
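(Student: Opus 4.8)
The plan is to evaluate $d(\Delta^k_c)=\sup\Delta^k_c-\inf\Delta^k_c$ directly. By the preceding lemma, $\Delta^k_c$ is the union of the rank-$k$ cylinders $\Delta^{-D}_{c_1c_2\ldots c_{k-1}c}$ taken over all admissible bases $c_1\ldots c_{k-1}$, and each such cylinder is a closed interval. Hence $\sup\Delta^k_c$ equals the largest among the right endpoints of these cylinders and $\inf\Delta^k_c$ the smallest among their left endpoints, so the task reduces to identifying the extremal bases and then subtracting.

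To find them I would invoke Proposition~\ref{nega-cs-pro1}: among nega-D-representations, a representation is the larger one when its even-indexed digits are chosen as large as possible and its odd-indexed digits as small as possible, read position by position; the reverse choice gives the smaller representation. The digit in position $k$ is forced to equal $c$, so the right endpoint of $\Delta^k_c$ is attained on the cylinder with base $c_i=d_i-1$ for even $i<k$ and $c_i=0$ for odd $i<k$, while the left endpoint is attained on the cylinder with $c_i=0$ for even $i<k$ and $c_i=d_i-1$ for odd $i<k$.

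Substituting these bases into the cylinder endpoint formulas of the preceding lemmas and subtracting, the contribution $\dfrac{(-1)^k c}{d_1d_2\cdots d_k}$ of position $k$ occurs identically in both values of $g_k$ and cancels, and the two tail contributions (built from $a_k$) combine to $\dfrac{1}{d_1d_2\cdots d_k}$; what survives is
$$
d(\Delta^k_c)=\sum_{j=1}^{k-1}\frac{d_j-1}{d_1d_2\cdots d_j}+\frac{1}{d_1d_2\cdots d_k}.
$$
Now the telescoping identity $\dfrac{d_j-1}{d_1\cdots d_j}=\dfrac{1}{d_1\cdots d_{j-1}}-\dfrac{1}{d_1\cdots d_j}$ gives $\displaystyle\sum_{j=1}^{k-1}\frac{d_j-1}{d_1\cdots d_j}=1-\frac{1}{d_1\cdots d_{k-1}}$, and therefore
$$
d(\Delta^k_c)=1-\frac{1}{d_1\cdots d_{k-1}}+\frac{1}{d_1\cdots d_k}=\frac{d_1d_2\cdots d_k-d_k+1}{d_1d_2\cdots d_k},
$$
as claimed.

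I expect the only delicate point to be the parity bookkeeping: for even $k$ one uses the ``even $m$'' branch of the cylinder lemma (where the right endpoint carries $a_k$ and the left endpoint carries $a_k-1$), and for odd $k$ the ``odd $m$'' branch, with the roles of $a_k$ and $a_k-1$ and the relevant signs interchanged; one has to check that in both cases the two tail contributions differ by exactly $\dfrac{1}{d_1d_2\cdots d_k}$ and that the position-$k$ terms cancel. Once that is verified, the remaining collapse of the sum is purely mechanical.
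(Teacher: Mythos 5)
Your proposal is correct and follows essentially the same route as the paper: the paper also computes $d(\Delta^k_c)$ as the maximum of the prefix sum $\sum_{i=1}^{k-1}\frac{(-1)^i\varepsilon_i}{d_1\cdots d_i}$ plus the supremum tail term minus the minimum of the prefix sum plus the infimum tail term, observes that the position-$k$ contribution cancels and the tails differ by $\frac{(-1)^k}{d_1\cdots d_k}$ (treating even and odd $k$ separately), and then collapses $\sum_{j=1}^{k-1}\frac{d_j-1}{d_1\cdots d_j}=1-\frac{1}{d_1\cdots d_{k-1}}$ exactly as you do. Your explicit identification of the extremal bases and the reduction via the union-of-cylinders lemma is just a slightly more detailed justification of the same computation.
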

\begin{proof}
Let $k$ be an even number, 
$$
a_k=\sup\sum^{\infty} _{j=1} {\frac{(-1)^j\varepsilon_{k+j}}{d_{k+1}d_{k+2}...d_{k+j}}}. ~\mbox{Then} ~d(\Delta^k _c)=\max\sum^{k-1} _{i=1} {\frac{(-1)^i\varepsilon_i}{d_1d_2...d_i}}+
$$
$$
+\frac{(-1)^kc}{d_1d_2...d_k}+\frac{(-1)^k}{d_1d_2...d_k}a_k-\min\sum^{k-1} _{i=1} {\frac{(-1)^i\varepsilon_i}{d_1d_2...d_i}}-\frac{(-1)^kc}{d_1d_2...d_k}-
$$
$$
-\frac{(-1)^k}{d_1d_2...d_k}(a_k-1)=\left(\frac{d_2-1}{d_1d_2}+\frac{d_4-1}{d_1...d_4}+...+\frac{d_{k-2}-1}{d_1...d_{k-2}}\right)+
$$
$$
+\left(\frac{d_1-1}{d_1}+\frac{d_3-1}{d_1d_2d_3}+...+\frac{d_{k-1}-1}{d_1...d_{k-1}}\right)+\frac{(-1)^k}{d_1...d_k}=
$$
$$
=1-\frac{1}{d_1...d_{k-1}}+\frac{(-1)^k}{d_1d_2...d_k}=\frac{d_1d_2...d_k-d_k+1}{d_1...d_k}.
$$

Let  $k$ be an odd number, then 
$$
d(\Delta^k _c)=\left(\frac{d_2-1}{d_1d_2}+\frac{d_4-1}{d_1...d_4}+...+\frac{d_{k-1}-1}{d_1...d_{k-1}}\right)-
$$
$$
-\left(-\frac{d_1-1}{d_1}-\frac{d_3-1}{d_1d_2d_3}-...-\frac{d_{k-2}-1}{d_1...d_{k-2}}\right)+\frac{(-1)^{k+1}}{d_1d_2...d_k}=
$$
$$
=1-\frac{1}{d_1d_2...d_{k-1}}+\frac{1}{d_1d_2...d_k}=\frac{d_1d_2...d_k-d_k+1}{d_1...d_k}.
$$
\end{proof}

Let $(c_1,c_2,...,c_m)$ and $(k_1,k_2,...,k_m)$ are fixed tuples of positive integer numbers such that  $c_i \in A_{d_{k_i}}$, $i=\overline{1,m}$, $0<k_1<k_2<...<k_m$.

\begin{lemma}
 The Lebesgue measure of the set $\Delta^{k_1k_2...k_m} _{c_1c_2...c_m}$ of numbers such that  its elements have a digit $c_i$ on  position $k_i$ $(i=\overline{1,m})$ of their nega-D-representations $\Delta^{-D} _{\varepsilon_1\varepsilon_2...\varepsilon_n...}$  is calculated by formula 
$$
\lambda\left(\Delta^{k_1k_2...k_m} _{c_1c_2...c_m}\right)=\prod^m _{i=1} {\frac{1}{d_{k_i}}}.
$$
\end{lemma}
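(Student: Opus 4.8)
The plan is to decompose $\Delta^{k_1k_2\dots k_m}_{c_1c_2\dots c_m}$ into a finite union of nega-D-cylinders of rank $k_m$ and then apply the length formula $|\Delta^{-D}_{c_1c_2\dots c_m}|=\frac{1}{d_1d_2\dots d_m}$ (Property 3 of cylinders). A point $x=\Delta^{-D}_{\varepsilon_1\varepsilon_2\dots}$ lies in $\Delta^{k_1\dots k_m}_{c_1\dots c_m}$ precisely when $\varepsilon_{k_i}=c_i$ for $i=\overline{1,m}$; since this constrains only the first $k_m$ digits, one gets
\begin{equation*}
\Delta^{k_1k_2\dots k_m}_{c_1c_2\dots c_m}=\bigcup \Delta^{-D}_{e_1e_2\dots e_{k_m}},
\end{equation*}
where the union runs over all tuples $(e_1,\dots,e_{k_m})$ with $e_j\in A_{d_j}$ for every $j$ and $e_{k_i}=c_i$ for $i=\overline{1,m}$. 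The free coordinates are exactly the indices $j\le k_m$ with $j\notin\{k_1,\dots,k_m\}$, and over each such $j$ the digit $e_j$ ranges through a set of $d_j$ values, so the number of cylinders in the union is $\prod_{j\le k_m,\ j\notin\{k_1,\dots,k_m\}}d_j$.

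Next I would handle the overlaps. By Property 9, two nega-D-cylinders of the same rank with distinct bases meet in at most a single point, hence the cylinders occurring in the decomposition have pairwise intersections of Lebesgue measure zero; as the union is finite, $\lambda$ of the union equals the sum of the measures of the individual cylinders. Each such cylinder has length $\frac{1}{d_1d_2\dots d_{k_m}}$ by Property 3, so
\begin{equation*}
\lambda\left(\Delta^{k_1k_2\dots k_m}_{c_1c_2\dots c_m}\right)=\frac{\prod_{\substack{1\le j\le k_m\\ j\notin\{k_1,\dots,k_m\}}}d_j}{d_1d_2\dots d_{k_m}}=\prod_{i=1}^{m}\frac{1}{d_{k_i}},
\end{equation*}
which is the asserted formula.

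An alternative route is induction on $m$: the case $m=1$ is the already-proved lemma $\lambda(\Delta^k_c)=\frac{1}{d_k}$, and the inductive step conditions on the value of the $k_m$-th digit inside $\Delta^{k_1\dots k_{m-1}}_{c_1\dots c_{m-1}}$, using that prescribing one more digit at position $k_m>k_{m-1}$ rescales the measure by the factor $\frac{1}{d_{k_m}}$ (self-similarity of the cylinder structure, Property 7). In either approach I expect the only point requiring care to be the verification that the cylinders in the decomposition overlap on a null set; once finite additivity of Lebesgue measure is invoked, what remains is the elementary counting displayed above.
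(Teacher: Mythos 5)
Your proposal is correct, and your primary route differs in presentation from the paper's, though the underlying computation is the same. The paper argues recursively: it first treats the case $m=2$ by writing $\lambda\bigl(\Delta^{k_1k_2}_{c_1c_2}\bigr)=\frac{1}{d_{k_2}}\cdot\frac{d_{k_2-1}}{d_{k_2-1}}\cdots\frac{d_{k_1+1}}{d_{k_1+1}}\,\lambda\bigl(\Delta^{k_1}_{c_1}\bigr)$ (i.e.\ multiplying by $d_j$ for each free position $j$ strictly between $k_1$ and $k_2$ while refining the rank), and then telescopes $\lambda\bigl(\Delta^{k_1\dots k_m}_{c_1\dots c_m}\bigr)=\frac{1}{d_{k_m}}\lambda\bigl(\Delta^{k_1\dots k_{m-1}}_{c_1\dots c_{m-1}}\bigr)=\dots=\frac{1}{d_{k_1}d_{k_2}\cdots d_{k_m}}$ --- which is exactly the ``alternative route'' you sketch at the end. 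Your main writeup instead decomposes the set once and for all into the $\prod_{j\le k_m,\,j\notin\{k_1,\dots,k_m\}}d_j$ cylinders of rank $k_m$ compatible with the constraints, and sums their lengths. What your version buys is that it makes explicit the one point the paper passes over in silence: that the cylinders in the decomposition are not literally disjoint (adjacent cylinders of the same rank share an endpoint, Properties 8--9), so one must observe that the pairwise intersections are null before invoking finite additivity. Your counting and the final cancellation are correct, so both arguments are sound; yours is the more self-contained, the paper's the more economical.
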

\begin{proof}  Let us  consider the set $\Delta^{k_1k_2} _{c_1c_2}$, that \ fixed \  digits \ $c_1 \in A_{d_{k_1}}$, \  $c_2 \in A_{d_{k_2}}$ are situated on positions  $k_1$ and $k_2$ respectively in  nega-D-representations of elements of the set.
$$
\lambda\left(\Delta^{k_1k_2} _{c_1c_2}\right)=\frac{1}{d_{k_2}}\cdot\frac{d_{k_2-1}}{d_{k_2-1}}\cdot...\cdot\frac{d_{k_1+1}}{d_{k_1+1}}|\Delta^{k_1} _{c_1}|=\frac{1}{d_{k_2}}\cdot\frac{1}{d_{k_1}}=\lambda\left(\Delta^{k_1} _{c_1}\right)\cdot\lambda\left(\Delta^{k_2} _{c_2}\right).
$$

$$
\lambda\left(\Delta^{k_1k_2...k_m} _{c_1c_2...c_m}\right)=\frac{1}{d_{k_m}}\left|\Delta^{k_1k_2...k_{m-1}} _{c_1c_2...c_{m-1}}\right|=\frac{1}{d_{k_m}}\cdot\frac{1}{d_{k_{m-1}}}\left|\Delta^{k_1k_2...k_{m-2}} _{c_1c_2...c_{m-2}}\right|=...=
$$
$$
=\frac{1}{d_{k_m}}\cdot \frac{1}{d_{k_{m-1}}}\cdot ...\cdot \frac{1}{d_{k_1}}=\frac{1}{d_{k_1}d_{k_2}...d_{k_m}}.
$$
\end{proof}

\begin{corollary}
Sets $\Delta^{k_1k_2...k_m} _{c_1c_2...c_m}$ are metrically independent, i. e. 
$$
\lambda\left(\Delta^{k_1k_2...k_m} _{c_1c_2...c_m}\right)=\lambda\left(\bigcap^m _{i=1} {\Delta^{k_i} _{c_i}}\right)=\prod^m _{i=1} {\lambda\left(\Delta^{k_i} _{c_i}\right)}.
$$
\end{corollary}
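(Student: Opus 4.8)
The plan is to read the corollary off directly from the two preceding lemmas, the only substantive point being the set-theoretic identification of $\Delta^{k_1k_2...k_m}_{c_1c_2...c_m}$ with $\bigcap^m_{i=1}\Delta^{k_i}_{c_i}$. First I would note that, by the very definitions, a number $x$ lies in $\Delta^{k_1k_2...k_m}_{c_1c_2...c_m}$ precisely when its nega-D-representation $\Delta^{-D}_{\varepsilon_1\varepsilon_2...\varepsilon_n...}$ satisfies $\varepsilon_{k_i}=c_i$ for every $i=\overline{1,m}$, and this is exactly the conjunction of the conditions defining membership in each $\Delta^{k_i}_{c_i}$ separately; hence the two sets coincide. (Here one should recall that the digits $\varepsilon_{k_i}(x)$ are well defined for nega-D-irrational $x$, and that the nega-D-rational exceptions form a countable, hence $\lambda$-null, set, so the identification holds up to a set of Lebesgue measure zero, which is all that is needed for the measure computation.)

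Second, I would invoke the lemma computing $\lambda\bigl(\Delta^{k_1k_2...k_m}_{c_1c_2...c_m}\bigr)=\prod^m_{i=1}\frac{1}{d_{k_i}}$. Finally, applying the lemma $\lambda(\Delta^k_c)=\frac{1}{d_k}$ to each factor rewrites the product as $\prod^m_{i=1}\lambda\bigl(\Delta^{k_i}_{c_i}\bigr)$, giving the chain of equalities
$$
\lambda\left(\Delta^{k_1k_2...k_m}_{c_1c_2...c_m}\right)=\lambda\left(\bigcap^m_{i=1}\Delta^{k_i}_{c_i}\right)=\prod^m_{i=1}\frac{1}{d_{k_i}}=\prod^m_{i=1}\lambda\left(\Delta^{k_i}_{c_i}\right),
$$
which is precisely the asserted metric independence.

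The proof has essentially no obstacle: everything reduces to bookkeeping with the definitions together with the two already-established measure formulas. If anything deserves a line of care, it is the point flagged above --- reading the identification of the fixed-digit set with the intersection of the coordinate sets in the almost-everywhere sense, so that the ambiguity of representations at nega-D-rational points is harmless --- but since that set is countable this is immediate.
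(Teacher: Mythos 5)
Your proposal is correct and matches the paper's intent exactly: the corollary is stated without a separate proof, being read off from the set identification $\Delta^{k_1k_2...k_m}_{c_1c_2...c_m}=\bigcap^m_{i=1}\Delta^{k_i}_{c_i}$ together with the two preceding measure lemmas, which is precisely your argument. Your extra remark about the countable set of nega-D-rational points is a harmless (and reasonable) bit of added care that the paper leaves implicit.
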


\begin{lemma} 
Diameter  $d\left(\Delta^{k_1k_2...k_m} _{c_1c_2...c_m}\right)$  of the set  $\Delta^{k_1k_2...k_m} _{c_1c_2...c_m}$ is  calculated by formula 
$$
d\left(\Delta^{k_1k_2...k_m} _{c_1c_2...c_m}\right)=1-\sum^{m} _{i=1} {\frac{d_{k_i}-1}{d_1d_2...d_{k_i}}}.
$$
\end{lemma}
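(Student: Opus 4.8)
The plan is to compute the diameter directly as the difference between the supremum and the infimum of the set $\Delta^{k_1k_2...k_m}_{c_1c_2...c_m}$, using the monotonicity criterion (Proposition~\ref{nega-cs-pro1}) to identify which digit choices in the free positions maximize and minimize an element. Recall that increasing a digit in an even position increases the number, while increasing a digit in an odd position decreases it. Hence, to obtain $\sup\Delta^{k_1k_2...k_m}_{c_1c_2...c_m}$ one takes $\varepsilon_n = d_n-1$ for every free even $n$ and $\varepsilon_n = 0$ for every free odd $n$; to obtain the infimum one reverses this, taking $\varepsilon_n=0$ for free even $n$ and $\varepsilon_n = d_n-1$ for free odd $n$. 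In both the supremum and the infimum element, the constrained positions $k_1,\dots,k_m$ carry the fixed digits $c_1,\dots,c_m$.

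First I would write the generic element of the set as $\sum_{n=1}^{\infty}\frac{(-1)^n\varepsilon_n}{d_1d_2\cdots d_n}$ with $\varepsilon_{k_i}=c_i$ fixed and all other $\varepsilon_n$ free in $A_{d_n}$. Splitting the sum into the free positions and the constrained positions, I observe that the contribution of the constrained terms $\sum_{i=1}^m \frac{(-1)^{k_i}c_i}{d_1\cdots d_{k_i}}$ is identical in the extremal elements, so it cancels in the difference $\sup - \inf$. Therefore the diameter equals the difference of the two purely ``extremal-free'' series, where the free positions run through all $n \notin\{k_1,\dots,k_m\}$. In the supremum element each free even position $n$ contributes $\frac{d_n-1}{d_1\cdots d_n}$ and each free odd position contributes $0$; in the infimum element each free even position contributes $0$ and each free odd position contributes $-\frac{d_n-1}{d_1\cdots d_n}$. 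Hence
$$
d\left(\Delta^{k_1k_2...k_m}_{c_1c_2...c_m}\right)=\sum_{\substack{n\notin\{k_1,\dots,k_m\}}}\frac{d_n-1}{d_1d_2\cdots d_n}.
$$

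The remaining step is to evaluate this series. Here I would use the telescoping identity $\sum_{n=1}^{\infty}\frac{d_n-1}{d_1d_2\cdots d_n}=1$ (equivalently, $a_0 + (1-a_0) = 1$, coming from the decomposition of the full series over even and odd positions, as already used repeatedly in the preceding lemmas), which shows that the sum over \emph{all} positions equals $1$. Subtracting the terms corresponding to the constrained positions $k_1,\dots,k_m$ gives exactly
$$
d\left(\Delta^{k_1k_2...k_m}_{c_1c_2...c_m}\right)=1-\sum_{i=1}^{m}\frac{d_{k_i}-1}{d_1d_2\cdots d_{k_i}},
$$
as claimed. The main obstacle — though a mild one — is the careful bookkeeping to confirm that the extremal elements genuinely lie in the set (they do, since the prescribed digits are legal and the remaining choices are within the allowed alphabets) and that no boundary subtlety arising from the two representations of nega-D-rational numbers changes the supremum or infimum; this is handled by the cylinder structure lemma, which guarantees the set is a finite union of closed cylinders and hence compact, so the sup and inf are attained.
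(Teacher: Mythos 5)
Your proposal is correct and follows essentially the same route as the paper: both compute $\sup-\inf$ by choosing the extremal digits at the free positions according to the parity-dependent monotonicity, cancel the fixed-digit contributions, and reduce the resulting sum $\sum_{n\notin\{k_1,\dots,k_m\}}\frac{d_n-1}{d_1\cdots d_n}$ to the stated formula via the telescoping identity $\sum_{n}\frac{d_n-1}{d_1\cdots d_n}=1$. The only (cosmetic) difference is that the paper splits the computation at the last constrained index $k_m$ and collapses the tail separately, whereas you treat all free positions uniformly, which is slightly cleaner bookkeeping.
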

\begin{proof} Let   $K=\{k_1,k_2,...,k_m\}$ and let $l=1,2,...$. Then
$$
\sup\Delta^{k_1k_2...k_m} _{c_1c_2...c_m}-\inf\Delta^{k_1k_2...k_m} _{c_1c_2...c_m}=
$$
$$
=\sum_{2l=j  \notin K, j<k_m} {\frac{d_j-1}{d_1d_2...d_j}}+\frac{(-1)^{k_m}}{d_1d_2...d_{k_m}}\sum^{\infty} _{p=1} {\frac{d_{k_m+2p}-1}{d_{k_m+1}...d_{k_m+2p}}}+
$$
$$
+\frac{(-1)^{k_m}}{d_1d_2...d_{k_m}}\sum^{\infty} _{p=1} {\frac{d_{k_m+2p}-1}{d_{k_m+1}...d_{k_m+2p}}}-
$$
$$
-\sum_{2l+1=j  \notin K, j<k_m} {\frac{1-d_j}{d_1d_2...d_j}}-\frac{(-1)^{k_m}}{d_1d_2...d_{k_m}}\sum^{\infty} _{p=1} {\frac{1-d_{k_m+2p+1}}{d_{k_m+1}...d_{k_m+2p+1}}}=
$$
$$
=\sum_{0<j<k_m, j \notin K} {\frac{d_j-1}{d_1d_2...d_j}}+ \frac{1}{d_1d_2...d_{k_m}}=1-\sum^{m} _{i=1} {\frac{d_{k_i}-1}{d_1d_2...d_{k_i}}}.
$$
\end{proof}

\section{Faithfulness of nega-D-cylinders for\\
 the Hausdorff-Besicovitch dimension calculating}

Let $E$  be a bounded subset of $[a_0-1;a_0]$.

\begin{theorem}
Let a sequence $(d_n)$ of elements of the alternating Cantor series sum is bounded. Then for any $E \subset [a_0-1;a_0]$  values of the Hausdorff-Besicovitch dimension of $E$  calculated by family of cylinders $\Delta^{-D} _{c_1c_2...c_n}$ and calculated by family of closed intervals are equals. 

\end{theorem}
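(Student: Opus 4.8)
The plan is to prove the theorem by establishing the two opposite inequalities between $\dim_H E$, the Hausdorff--Besicovitch dimension of $E$ computed with the family of all closed intervals (this is the usual dimension), and the value $\dim_H^{-D}E$ computed only with nega-D-cylinders. One inequality is free: by the lemma on cylinders every $\Delta^{-D}_{c_1c_2\dots c_n}$ is a closed interval, so every cover of $E$ by cylinders is in particular a cover by closed intervals; restricting the class of admissible covers can only enlarge the corresponding infimum of $\alpha$-sums, whence $\mathcal H^\alpha_{-D}(E)\ge\mathcal H^\alpha(E)$ for all $\alpha\ge0$ and therefore $\dim_H^{-D}E\ge\dim_H E$. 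The whole substance of the theorem is the reverse estimate, i.e.\ the \emph{faithfulness} of the system of nega-D-cylinders: cylinder covers are not asymptotically more wasteful than arbitrary interval covers.

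To get $\dim_H^{-D}E\le\dim_H E$ I would exploit the boundedness hypothesis through the ratio of lengths of consecutive cylinders. Set $M:=\sup_n d_n<\infty$ and, for $m\ge0$, $\delta_m:=\dfrac{1}{d_1d_2\cdots d_m}$ (with $\delta_0=1$). By property~3 of the cylinder lemma $\delta_m=|\Delta^{-D}_{c_1\dots c_m}|$ independently of the base, the sequence $(\delta_m)$ is strictly decreasing to $0$ (property~6), and $\delta_{m-1}/\delta_m=d_m\le M$. Fix $\varepsilon\in(0,1)$ and an arbitrary cover $\{U_j\}_j$ of $E$ by closed intervals with $|U_j|<\varepsilon$. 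For each $j$ let $m_j$ be minimal with $\delta_{m_j}\le|U_j|$; then $|U_j|<\delta_{m_j-1}=d_{m_j}\delta_{m_j}\le M\delta_{m_j}$, so $\delta_{m_j}\le|U_j|<M\delta_{m_j}$, i.e.\ the cylinders of rank $m_j$ have length comparable to $|U_j|$ up to the fixed factor $M$.

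The next step uses that, by properties 5, 8 and 9 of the cylinder lemma, for each fixed $m$ the cylinders $\Delta^{-D}_{c_1\dots c_m}$ form a covering of $[a_0-1;a_0]$ by closed intervals of equal length $\delta_m$ with pairwise disjoint interiors (distinct ones are either disjoint or share a single endpoint). Hence an interval of length $\ell$ meets at most $\lceil\ell/\delta_m\rceil+1$ of them; applied to $U_j$ with $\ell=|U_j|<M\delta_{m_j}$ this shows that $U_j$ is covered by at most $N:=M+2$ cylinders of rank $m_j$, each of diameter $\delta_{m_j}\le|U_j|<\varepsilon$. Taking the union over $j$ of these cylinders yields a cover of $E$ by nega-D-cylinders of diameters $<\varepsilon$ whose $\alpha$-sum does not exceed $N\sum_j|U_j|^\alpha$. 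Passing to the infimum over interval covers and then letting $\varepsilon\to0$ gives $\mathcal H^\alpha_{-D}(E)\le N\,\mathcal H^\alpha(E)$ for every $\alpha\ge0$. In particular $\mathcal H^\alpha(E)=0$ forces $\mathcal H^\alpha_{-D}(E)=0$, so $\dim_H^{-D}E\le\dim_H E$, and together with the first part the two dimensions coincide.

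The ingredients that are genuinely needed are already available: the length formula $|\Delta^{-D}_{c_1\dots c_m}|=1/(d_1\cdots d_m)$ and the partition-up-to-endpoints property of same-rank cylinders. What must be done carefully — and the only place where the hypothesis enters — is the uniform bound $\delta_{m-1}/\delta_m=d_m\le M$: if $(d_n)$ were unbounded, the number $N$ of rank-$m_j$ cylinders needed to cover $U_j$ could not be bounded independently of $j$, the constant in $\mathcal H^\alpha_{-D}(E)\le N\,\mathcal H^\alpha(E)$ would be lost, and faithfulness could (and in general does) fail. The argument is a concrete instance of the standard sufficient conditions for faithfulness of a covering system, cf.~\cite{Pra98}.
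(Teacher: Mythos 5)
Your proof is correct and follows essentially the same route as the paper: both establish faithfulness by replacing each closed interval of a cover with a uniformly bounded number of comparable-length nega-D-cylinders of a suitably chosen rank, the uniform bound coming precisely from $\sup_n d_n<\infty$. The only difference is cosmetic --- you select the rank by comparing lengths and get the constant $M+2$, whereas the paper selects it by a containment condition and gets $d_kd_{k+1}\le\left(\max_n\{d_n\}\right)^2$ --- and your treatment of the easy inequality $\mathcal H^{\alpha}_{-D}(E)\ge\mathcal H^{\alpha}(E)$ and of the tiling property of same-rank cylinders is, if anything, more explicit than the paper's.
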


\begin{proof} Let $\Phi_1$ be a family  of covering of  $E$  by closed intervals and $\Phi_2$  be a family  of covering of  $E$  by cylinders $\Delta^{-D} _{c_1c_2...c_n}$.

Let us find conditions on $(d_n)$ such that 
$$
 m^{\alpha} _{\varepsilon} (E,\Phi_1)\le m^{\alpha} _{\varepsilon} (E,\Phi_2).
$$
for $\Phi_2 \subset \Phi_1$.

Let $u$ be an arbitrary closed interval of covering of $E$.

Let $k$ be a minimal positive integer such that $u$  does not contain nega-D-cylinders $\Delta^{-D} _{c_1c_2...c_n}$ of rank $k-1$. Then $u$ belongs to not more than $d_k$ cylinders of rank $k$ but $u$ contains a cylinder of rank $k+1$.
$$
m^{\alpha} _{\varepsilon} (E,\Phi_1)\le m^{\alpha} _{\varepsilon} (E,\Phi_2)\le d_kd_{k+1} m^{\alpha} _{\varepsilon} (E,\Phi_1), 
$$
where
$$
m^{\alpha} _{\varepsilon} (E,\Phi)=\inf_{d(E_j)\le \varepsilon} {\sum_j {d^{\alpha}(E_j)}} 
$$
for fixed  $\varepsilon>0$, fixed $\alpha>0$ and covering of  $E$ by sets  $E_j$ with diameters  $d(E_j)\le \varepsilon$.

It should be noted that 
$$
d_kd_{k+1}\le \left(\max_n \{d_n\}\right)^2<\infty, ~~~\mbox{if  $(d_n)$ is bounded.} 
$$
Indeed,
$$
0<\lambda_1= \frac{1}{\max_n \{d_n\}}\le \frac{|\Delta^{-D} _{c_1c_2...c_ni}|}{|\Delta^{-D} _{c_1c_2...c_n}|}= \frac{1}{d_{n+1}}\le \frac{1}{2}=\lambda_2<1,
$$
where for arbitrary  $n \in  \mathbb N$ $\lambda_1$ and $\lambda_2$ are fixed numbers. It is true iff a sequence $(d_n)$ is  bounded.  
\end{proof}

In 2013, several authors \cite{AILT2013} considered  general necessary and sufficient conditions for a covering family to be faithful and new techniques for proving faithfulness/non-faithfulness for the family of cylinders generated by expansions of real numbers by positive Cantor series.
\section{Set of incomplete sums}

Let $(d_n)$ be a fixed sequence of positive integer numbers $d_n>1$ and $(\varepsilon_n)$ be a fixed sequence. Let us consider
the  corresponding alternating Cantor series 
\begin{equation}
\label{eq:sum1}
s_0=\sum^{\infty} _{n=1} {\frac{(-1)^{n}\varepsilon_n}{d_1d_2...d_n}}\equiv\Delta^{-D} _{\varepsilon_1\varepsilon_2...\varepsilon_n...}.
\end{equation}
Let $(A^{'} _n)$ be a sequence of sets  $A^{'} _{n}\equiv\{0,\varepsilon_n\}$ and
$$
L^{'} _{s_0}=A^{'} _1 \times A^{'} _2 \times A^{'} _n \times ...=\{\delta: \delta=(\delta_1, \delta_2,...,\delta_n,...), \delta_n \in A^{'} _n\}.
$$
\begin{definition}
A number $s=s(\delta)$ 
\begin{equation}
\label{eq:sum2}
s=s(\delta)=\sum^{\infty} _{n=1} {\frac{(-1)^n\delta_n}{d_1d_2...d_n}},
\end{equation}
where $\delta=(\delta_n) \in L^{'} _{s_0}$,
is called {\itshape the  incomplete sum of the alternating Cantor series \eqref{eq:sum1}}. 
\end{definition}

 The set of all incomplete sums of the alternating Cantor series \eqref{eq:sum1} is denoted by $M_{s_0}$, i. e. 
$$
M_{s_0}\equiv\{x: x=\Delta^{-D} _{\delta_1\delta_2...\delta_n...}, (\delta_n)\in L^{'} _{s_0}\}.
$$

It is obvious that 
$$
M_{s_0}\subset \left[-\sum^{\infty} _{k=1} {\frac{\varepsilon_{2k-1}}{d_1d_2...d_{2k-1}}};\sum^{\infty} _{k=1} {\frac{\varepsilon_{2k}}{d_1d_2...d_{2k}}}\right]=I^{M_{s_0}} _0~~~\mbox{for}~~~s_0=\Delta^{-D} _{\varepsilon_1\varepsilon_2...\varepsilon_n...}
$$
and $M_{s_0}=\{0\}$ for $s_0=0$. Moreover,
$$
\bigcup_{s_0} {M_{s_0}}=\left[-\sum^{\infty} _{k=1} {\frac{d_{2k-1}}{d_1d_2...d_{2k-1}}};\sum^{\infty} _{k=1} {\frac{d_{2k}}{d_1d_2...d_{2k}}}\right].
$$

For investigating topological and metric properties of sets $M_{s_0}$  of all incomplete sums of the series \eqref{eq:sum1}, one can  introduce  some auxiliary notions.

\begin{definition}
\label{def:11.7}
Cylinder of rank $n$ with the base $c_1c_2...c_n$ is a set 
$$
\Delta^{M_{s_0}} _{c_1c_2...c_m}\equiv\left\{x: x=\sum^{n} _{i=1} {\frac{(-1)^ic_i}{d_1d_2...d_i}}+\sum^{\infty} _{j=n+1} {\frac{(-1)^j\delta_j}{d_1d_2...d_j}}\right\},
$$
where $c_1,c_2,...,c_n$ are fixed numbers from $A^{'} _1,A^{'} _2,...,A^{'} _n$ respectively and $\delta_j \in A^{'} _j$.
\end{definition}

\begin{definition}
Cylindrical closed interval (interval)  $I^{M_{s_0}} _{c_1c_2...c_n}$ ($\nabla^{M_{s_0}} _{c_1c_2...c_n}$) of rank  $n$ with the base  $c_1c_2...c_n$ is a closed interval (interval), that its ends coincide with ends of the cylinder $\Delta^{M_{s_0}} _{c_1c_2...c_n}$.
\end{definition}

The following properties of cylindrical sets follow from the Definition~\ref{def:11.7} immediately.
\begin{enumerate}
\item
$$
\inf {\Delta^{M_{s_0}} _{c_1c_2...c_n}}=\begin{cases}
\Delta^{-D} _{c_1c_2...c_n 0\varepsilon_{n+2}0\varepsilon_{n+4}...},&\text{if $n$ be an odd;}\\
\Delta^{-D} _{c_1c_2...c_n\varepsilon_{n+1}0\varepsilon_{n+3}0\varepsilon_{n+5}...},&\text{if $n$ be an even.}
\end{cases}
$$
\item
$$
\sup {\Delta^{M_{s_0}} _{c_1c_2...c_n}}=\begin{cases}
\Delta^{-D} _{c_1c_2...c_n\varepsilon_{n+1}0\varepsilon_{n+3}0\varepsilon_{n+5}...},&\text{if $n$ be an odd;}\\
\Delta^{-D} _{c_1c_2...c_n 0\varepsilon_{n+2}0\varepsilon_{n+4}...},&\text{if $n$ be an even.}
\end{cases}
$$
\item
$$
d(\Delta^{M_{s_0}} _{c_1c_2...c_n})=\Delta^{D} _{\underbrace{0...0}_{n}\varepsilon_{n+1}\varepsilon_{n+2}...}\equiv \sum^{\infty} _{k=n+1} {\frac{\varepsilon_{k}}{d_1d_2...d_k}}\le \frac{1}{d_1d_2...d_n}\to 0, 
$$
$n\to \infty$.
\item If $\varepsilon_{n+1}\ne 0$,
$$
\Delta^{M_{s_0}} _{c_1c_2...c_n}=\Delta^{M_{s_0}} _{c_1c_2...c_n0}\cup \Delta^{M_{s_0}} _{c_1c_2...c_n\varepsilon_{n+1}}.
$$
\item \label{pro5}
$$
\Delta^{M_{s_0}} _{c_1c_2...c_n}\subset I^{M_{s_0}} _{c_1c_2...c_n}\subset \Delta^{-D} _{c_1c_2...c_n},
$$
$$
M_{s_0}\subset \bigcup _{c_i \in A^{'} _i, i=\overline{1,n}} {\Delta^{M_{s_0}} _{c_1c_2...c_n}}\subset  \bigcup_{c_i \in A^{'} _i, i=\overline{1,n}} {I^{M_{s_0}} _{c_1c_2...c_n}}.
$$
\item 
$$
|\Delta^{-D} _{c_1c_2...c_n}\setminus (\Delta^{-D} _{c_1c_2...c_n0} \cup \Delta^{-D} _{c_1c_2...c_n\varepsilon_{n+1}})|=\begin{cases}
\frac{d_{n+1}-2}{d_1d_2...d_{n+1}},&\text{if $\varepsilon_{n+1}>0$;}\\
\frac{d_{n+1}-1}{d_1d_2...d_{n+1}},&\text{if $\varepsilon_{n+1}=0$.}
\end{cases}
$$
 \end{enumerate}

\begin{lemma} Let $s_0=\Delta^{-D} _{\varepsilon_1\varepsilon_2...\varepsilon_n...}$ be a given number and let $(c_n)$ be an arbitrary fixed sequence from  $L^{'} _{s_0}$. Then
\begin{enumerate}
\item
$$
\bigcap^{\infty} _{i=1} {\Delta^{-D} _{c_1c_2...c_n}}=\bigcap^{\infty} _{i=1} {\Delta^{M_{s_0}} _{c_1c_2...c_n}}=\Delta^{-D} _{c_1c_2...c_n...}.
$$
\item
$$
\Delta^{M_{s_0}} _{c_1c_2...c_n}=\Delta^{-D} _{c_1c_2...c_n}\cap M_{s_0}
$$
and
$$
M_{s_0}=\bigcap^{\infty} _{n=1} {\left(\bigcup_{c_i \in A^{'} _{i}, i=\overline{1,n}} {\Delta^{M_{s_0}} _{c_1c_2...c_n}}\right)},
$$
where $A^{} _i=\{0,\varepsilon_i\}$.
\end{enumerate}
\end{lemma}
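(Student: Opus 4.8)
The plan is to reduce the whole statement to cylinder facts already in hand, namely the inclusions of property~\ref{pro5}, the nesting $\Delta^{M_{s_0}}_{c_1\ldots c_nc_{n+1}}\subset\Delta^{M_{s_0}}_{c_1\ldots c_n}$ with $d(\Delta^{M_{s_0}}_{c_1\ldots c_n})\to0$, the decomposition $\Delta^{M_{s_0}}_{c_1\ldots c_n}=\Delta^{M_{s_0}}_{c_1\ldots c_n0}\cup\Delta^{M_{s_0}}_{c_1\ldots c_n\varepsilon_{n+1}}$ (valid when $\varepsilon_{n+1}\ne0$), and the intersection property $\bigcap_m\Delta^{-D}_{c_1\ldots c_m}=\Delta^{-D}_{c_1c_2\ldots c_m\ldots}$ for ordinary nega-D-cylinders, which applies to the base $(c_n)$ verbatim because every $c_n$ lies in $A'_n\subset A_{d_n}$.

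For assertion~(1) I would first note that $\Delta^{M_{s_0}}_{c_1\ldots c_n}\subset\Delta^{-D}_{c_1\ldots c_n}$ gives $\bigcap_n\Delta^{M_{s_0}}_{c_1\ldots c_n}\subset\bigcap_n\Delta^{-D}_{c_1\ldots c_n}=\{x_0\}$, where $x_0:=\Delta^{-D}_{c_1c_2\ldots c_n\ldots}$; conversely $x_0$ is precisely the incomplete sum $s(\delta)$ with $\delta=(c_n)\in L'_{s_0}$, and splitting off its first $n$ terms, $x_0=\sum_{i=1}^n\frac{(-1)^ic_i}{d_1\ldots d_i}+\sum_{j>n}\frac{(-1)^jc_j}{d_1\ldots d_j}$ with $c_j\in A'_j$, exhibits $x_0\in\Delta^{M_{s_0}}_{c_1\ldots c_n}$ for every $n$. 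Hence all three intersections collapse to the singleton $\{x_0\}$.

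For the identity $\Delta^{M_{s_0}}_{c_1\ldots c_n}=\Delta^{-D}_{c_1\ldots c_n}\cap M_{s_0}$ the inclusion $\subseteq$ is immediate from property~\ref{pro5} together with $\Delta^{M_{s_0}}_{c_1\ldots c_n}\subset M_{s_0}$. For $\supseteq$ I would take $x\in\Delta^{-D}_{c_1\ldots c_n}\cap M_{s_0}$: membership in $M_{s_0}$ supplies an expansion $x=\Delta^{-D}_{\delta_1\delta_2\ldots}$ with $\delta_j\in\{0,\varepsilon_j\}$, and membership in $\Delta^{-D}_{c_1\ldots c_n}$ supplies an expansion beginning $c_1\ldots c_n$; for nega-D-irrational $x$ these coincide by the uniqueness theorem, so $\delta_j=c_j$ for $j\le n$ and $x\in\Delta^{M_{s_0}}_{c_1\ldots c_n}$. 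For nega-D-rational $x$ there are exactly two expansions, related as in Lemma~\ref{nega-cs-lem2}, and one must argue that the alphabet restrictions $\delta_j,c_j\in\{0,\varepsilon_j\}$ force these two expansions to agree on their first $n$ coordinates, after which $\delta_j=c_j$ for $j\le n$ as before. Reconciling the two-representation phenomenon at boundary points with the restricted alphabet $A'_j=\{0,\varepsilon_j\}$ is the step I expect to be the main obstacle; everything else is bookkeeping with the cylinder lemmas.

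Finally, for $M_{s_0}=\bigcap_n\bigl(\bigcup_{c_i\in A'_i}\Delta^{M_{s_0}}_{c_1\ldots c_n}\bigr)$ the inclusion $\subseteq$ is just the second line of property~\ref{pro5} read for each $n$. For $\supseteq$ I would, given $x$ in the right-hand set, build a single base $(c_n)\in L'_{s_0}$ by descent: pick $c_1\in A'_1$ with $x\in\Delta^{M_{s_0}}_{c_1}$, and having chosen $c_1\ldots c_n$ with $x\in\Delta^{M_{s_0}}_{c_1\ldots c_n}$, use $\Delta^{M_{s_0}}_{c_1\ldots c_n}=\Delta^{M_{s_0}}_{c_1\ldots c_n0}\cup\Delta^{M_{s_0}}_{c_1\ldots c_n\varepsilon_{n+1}}$ (a singleton alphabet making the step automatic when $\varepsilon_{n+1}=0$) to pick $c_{n+1}$. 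The resulting nested chain of cylinders all contains $x$, and by assertion~(1) intersects in the single point $\Delta^{-D}_{c_1c_2\ldots c_n\ldots}=s((c_n))$, whence $x=s((c_n))\in M_{s_0}$.
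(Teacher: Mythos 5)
Your overall route coincides with the paper's (the paper's own proof is just a much terser version of the same bookkeeping: part (1) from the definition of $\Delta^{M_{s_0}}_{c_1\ldots c_n}$ plus property \ref{pro5} and the nested-cylinder intersection, part (2) by double inclusion), and your treatment of part (1) and of the identity $M_{s_0}=\bigcap_n\bigl(\bigcup_{c_i\in A'_i}\Delta^{M_{s_0}}_{c_1\ldots c_n}\bigr)$ by descent through the decomposition $\Delta^{M_{s_0}}_{c_1\ldots c_n}=\Delta^{M_{s_0}}_{c_1\ldots c_n0}\cup\Delta^{M_{s_0}}_{c_1\ldots c_n\varepsilon_{n+1}}$ is sound. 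You are also right that the only delicate point is the inclusion $\Delta^{-D}_{c_1\ldots c_n}\cap M_{s_0}\subseteq\Delta^{M_{s_0}}_{c_1\ldots c_n}$ at nega-D-rational points; the paper's proof passes over this in silence.

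However, the way you propose to close that gap does not work: the alphabet restrictions do \emph{not} force the two expansions of a nega-D-rational $x$ to agree on their first $n$ coordinates. By Lemma \ref{nega-cs-lem2} the two expansions differ first at some position $m$ by exactly $1$, and beyond $m$ each consists of alternating digits $0$ and $d_j-1$. If $\varepsilon_m=1$ and $\varepsilon_j=d_j-1$ for all $j>m$ (so $A'_m=\{0,1\}$ and $A'_j=\{0,d_j-1\}$), then \emph{both} expansions have all their digits in the restricted alphabet, and they already disagree at position $m\le n$; this is exactly the exceptional value $s_0=\Delta^{-D}_{\varepsilon_1\ldots\varepsilon_{m-1}1[d_{m+1}-1][d_{m+2}-1]\ldots}$ that the paper itself singles out later when proving that $M_{s_0}$ is nowhere dense. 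The inclusion nevertheless survives, but for a different reason: in this exceptional case the expansion of $x$ that begins with $c_1\ldots c_n$ has a tail beyond position $n$ whose digits are again only $0$ and $d_j-1=\varepsilon_j\in A'_j$, so $x$ lies in $\Delta^{M_{s_0}}_{c_1\ldots c_n}$ via \emph{that} expansion rather than via the $(\delta_j)$ one. You should replace the agreement claim by this case analysis: either the two expansions first differ after position $n$ (and then $\delta_i=c_i$ for $i\le n$ as you say), or they differ at some $m\le n$, in which case the structure forced by Lemma \ref{nega-cs-lem2} together with $c_m,\delta_m\in A'_m$ implies $\varepsilon_j=d_j-1$ for $j>m$ and hence the admissibility of the second expansion's tail.
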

\begin{proof}

1. Let $x=\Delta^{-D} _{c_1c_2...c_n...}$. $\Delta^{-D} _{c_1c_2...c_n...}=x \in \Delta^{M_{s_0}} _{c_1c_2...c_n}$ follows from the definition of $\Delta^{M_{s_0}} _{c_1c_2...c_n}$. Therefore, $x \in \bigcap^{\infty} _{n=1} {I^{M_{s_0}} _{c_1c_2...c_n}}$. First proposition of the lemma follows from the Property \ref{pro5} of $\Delta^{M_{s_0}} _{c_1c_2...c_n}$.

2.  Let $x \in M_{s_0}$, then  $x$ belongs to certain cylinder  $\Delta^{M_{s_0}} _{c_1c_2...c_n}\subset \Delta^{-D} _{c_1c_2...c_n}$. Let us consider a set $\Delta^{-D} _{c_1c_2...c_n} \cap M_{s_0}$. Numbers of type  $\Delta^{-D} _{c_1c_2...c_n\delta_{n+1}\delta_{n+2}...\delta_{n+k}...}$, where $\delta_{n+k} \in A^{'} _{n+k}$, are elements of the set. 
Consequently,
$$
\Delta^{M_{s_0}} _{c_1c_2...c_n}\subset (\Delta^{-D} _{c_1c_2...c_n} \cap M_{s_0})
$$
 and  if $x \in (\Delta^{-D} _{c_1c_2...c_n} \cap M_{s_0})$, then $x \in \Delta^{M_{s_0}} _{c_1c_2...c_n}$.
\end{proof}

\begin{theorem}
The set  $ M_{s_0}$ of incomplete sums of the alternating Cantor series is 
\begin{enumerate}
\item an one-element set $\{0\}$, if $s_0=0$;

\item a finite set, when the condition $\varepsilon_n \ne 0$ is true for finite set of values $n$ in $s_0=\Delta^{-D} _{\varepsilon_1\varepsilon_2...\varepsilon_n...}$;
 
\item
a segment $\left[-\frac{2}{3};\frac{1}{3}\right]$, when   $d_n=const=2$ for all \  $n \in \mathbb N$ \  and \  $s_0=-\frac{1}{3}$;

\item a union of finite number of  segments, when there exists finite set of values $m_i$ ($i=\overline{1,k_0}$, $k_0$ is a fixed number), that $d_{m_i}\ne 2$ and $s_0=\Delta^{-D} _{\varepsilon_1\varepsilon_2...\varepsilon_{m_{k_0}}(1)}$;

\item a continuous, perfect, nowhere dense set of zero Lebesgue measure, when $s_0 \ne 0$ and $d_n>2$ for infinite set of values  $n$. 
\end{enumerate}
\end{theorem}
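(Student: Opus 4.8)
The plan is to analyze the five cases separately, using the cylinder structure $\Delta^{M_{s_0}}_{c_1c_2\dots c_n}$ established in the preceding lemma, together with the diameter estimate $d(\Delta^{M_{s_0}}_{c_1c_2\dots c_n}) = \sum_{k>n} \varepsilon_k/(d_1\cdots d_k) \le 1/(d_1\cdots d_n) \to 0$. Cases 1 and 2 are immediate: if $s_0=0$ then every $\delta_n$ must be $0$, so $M_{s_0}=\{0\}$; if $\varepsilon_n\ne 0$ only for $n$ in a finite set $F$, then $L'_{s_0}$ is in bijection with $\{0,1\}^{F}$, a finite set, and distinct $\delta$ give distinct sums by Proposition~\ref{nega-cs-pro1} (or directly), so $M_{s_0}$ is finite.

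For case 5 — the main case — I would argue that $M_{s_0}$ is a topological Cantor set. First, $M_{s_0}$ is closed: it equals $\bigcap_n \bigcup_{c_i\in A'_i} \Delta^{M_{s_0}}_{c_1\dots c_n}$, a nested intersection of finite unions of closed cylindrical intervals (using Property~\ref{pro5} and the last lemma). It is perfect because every point lies in arbitrarily small cylinders $\Delta^{M_{s_0}}_{c_1\dots c_n}$ that split (whenever $\varepsilon_{n+1}\ne 0$) into two disjoint nonempty subcylinders — and since $\varepsilon_n\ne 0$ for infinitely many $n$, splitting happens infinitely often, so no point is isolated. For nowhere density and zero Lebesgue measure, I would use that whenever $d_{n+1}>2$ the parent cylinder $\Delta^{-D}_{c_1\dots c_n}$ strictly contains the union $\Delta^{-D}_{c_1\dots c_n 0}\cup\Delta^{-D}_{c_1\dots c_n\varepsilon_{n+1}}$, with the complement of relative measure at least $(d_{n+1}-2)/d_{n+1}$ (Property~6 of the cylindrical sets); iterating over the infinitely many $n$ with $d_{n+1}>2$ forces $\lambda(M_{s_0})=0$, since the surviving set at stage $n$ has measure at most $\prod_{j\le n,\, d_j>2}\big((2)/d_j\big)\cdot(\text{const})$, which tends to $0$. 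Zero measure plus closedness gives empty interior, and a closed set with empty interior is nowhere dense; continuity (connectedness-free, meaning "continual"/of cardinality continuum) follows because the coding map $\{0,1\}^{\mathbb N'}\to M_{s_0}$, $\delta\mapsto s(\delta)$, is surjective and the index set (positions with $\varepsilon_n\ne0$) is infinite.

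For cases 3 and 4, I would compute directly. When $d_n\equiv 2$ the digits $\varepsilon_n$ all equal $1$ forces $s_0=\Delta^{-D}_{(1)}=-\tfrac13$, and then $L'_{s_0}=\{0,1\}^{\mathbb N}$ is the full nega-binary digit space, so $M_{s_0}$ is the whole admissible interval $[a_0-1;a_0]=[-\tfrac23;\tfrac13]$ — here Lemma~\ref{nega-cs-lem1} (surjectivity of the representation) does the work. For case 4, if only finitely many $d_{m_i}\ne 2$ and $s_0=\Delta^{-D}_{\varepsilon_1\dots\varepsilon_{m_{k_0}}(1)}$, then beyond rank $m_{k_0}$ the situation is exactly the $d_n\equiv 2$ case, so each cylinder $\Delta^{M_{s_0}}_{c_1\dots c_{m_{k_0}}}$ is a full segment of length $1/(d_1\cdots d_{m_{k_0}})\cdot\tfrac13\cdot(\text{tail normalization})$; the finitely many choices of $(c_1,\dots,c_{m_{k_0}})\in\prod A'_i$ give a finite union of segments (some of which may be adjacent and merge, but it is still a finite union of segments).

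The hard part will be case 5, specifically pinning down "continuous, perfect, nowhere dense, zero measure" all at once and making the measure-zero estimate fully rigorous: one must handle the interplay between the splitting ranks (those $n$ with $\varepsilon_{n+1}\ne 0$, giving the Cantor-like branching needed for perfectness and uncountability) and the "gap" ranks (those $n$ with $d_{n+1}>2$, giving the measure loss) — these two infinite sets may overlap only partially, so the bookkeeping for $\lambda(M_{s_0})=0$ needs care, and one should also confirm that $\bigcap_n\bigcup \Delta^{M_{s_0}}_{c_1\dots c_n}$ genuinely equals $M_{s_0}$ rather than something larger, which is exactly the content of part~2 of the last lemma.
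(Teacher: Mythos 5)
Your overall architecture matches the paper's: cases 1--4 are dispatched by observing that $M_{s_0}$ is a Cantor-type set $C[-D,A^{'}_n]$ and that for $d_n\equiv 2$, $s_0=-\tfrac13$ one has $A^{'}_n=A_{d_n}$ so the coding exhausts all of $[a_0-1;a_0]=[-\tfrac23;\tfrac13]$; and case 5 is handled through the cylinders $\Delta^{M_{s_0}}_{c_1\ldots c_n}$, with closedness from the nested intersection, perfectness from the infinitely many splitting ranks, and $\lambda(M_{s_0})=0$ from the covering bound $2^k/(d_1\cdots d_k)=\prod_{j\le k}(2/d_j)\to 0$, which is exactly the paper's final estimate. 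The one place you genuinely diverge is nowhere density: the paper proves it directly by computing $\inf\Delta^{M_{s_0}}_{c_1\ldots c_{n-1}\varepsilon_n}-\sup\Delta^{M_{s_0}}_{c_1\ldots c_{n-1}0}\ge 0$ and exhibiting an explicit gap inside each parent cylinder, whereas you deduce it from ``closed $+$ measure zero $\Rightarrow$ empty interior $\Rightarrow$ nowhere dense.'' Your route is shorter and perfectly valid (and in fact makes the measure computation do double duty); the paper's gap computation has the side benefit of identifying exactly when adjacent subcylinders touch, which it reuses elsewhere.

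There is one logical slip worth fixing: for the cardinality-continuum claim you argue that the coding map $\{0,1\}^{\mathbb N^{'}}\to M_{s_0}$ is \emph{surjective} with infinite index set. A surjection from a set of cardinality $\mathfrak c$ only gives $|M_{s_0}|\le\mathfrak c$; to get the lower bound you need the map to be at most countable-to-one --- which it is (at most $2$-to-$1$, by the uniqueness theorem for nega-D-representations) --- or simply invoke that a nonempty perfect subset of $\mathbb R$ has cardinality $\mathfrak c$, which your perfectness argument already delivers. The paper instead establishes cardinality via a conjugacy with an Engel-series Cantor set $C[E,V_n]$, bijective off the countable set of nega-D-rational points; your repaired version avoids that detour entirely. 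Two further small points: in case 4 the full subcylinders have length exactly $1/(d_1\cdots d_{m_{k_0}})$ (no extra factor of $\tfrac13$ or tail normalization is needed, by property 3 of the cylinders), and in case 5 you, like the paper, implicitly assume $\varepsilon_n\ne 0$ for infinitely many $n$ (otherwise one is back in case 2); it is worth stating that this is the intended reading of the hypothesis.
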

\begin{proof} First four propositions follow from the following facts: the set $M_{s_0}$ is $C[-D, A^{'} _n]$, for condition  $d_n=2$ ( $n \in \mathbb N$) the alternating Cantor series is a nega-binary sum
$$
-\frac{\varepsilon_1}{2}+\frac{\varepsilon_2}{2^2}-\frac{\varepsilon_3}{2^3}+...+\frac{(-1)^n \varepsilon_n}{2^n}+...,~\mbox{де}~ \varepsilon_n \in \{0,1\}.
$$

To prove fifth proposition, let us consider
$$
x=\Delta^{-D} _{\delta_1\delta_2...\delta_n...} \stackrel{f}{\rightarrow} \sum^{\infty} _{n=1} {\frac{1}{(2+\delta_1)(2+\delta_1+\delta_2)...(2+\delta_1+...+\delta_n)}}\equiv
$$
$$
\equiv \Delta^E _{\delta_1\delta_2...\delta_n...}=f(x)=y.
$$
That is argument of  the mapping $f$ is represented by the alternating Cantor series and its value is represented by the positive Engel series and $ f: M_{s_0} \to C[E,V_n]$, where $V_n=~A^{'} _n$. The last-mentioned mapping is not bijection in  nega-D-rational points
$$
\Delta^{-D} _{\delta_1\delta_2...\delta_{k-1}\delta_k[d_{k+1}-1]0[d_{k+3}-1]0...}=\Delta^{-D} _{\delta_1\delta_2...\delta_{k-1}[\delta_k-1]0[d_{k+2}-1]0[d_{k+4}-1]...}.
$$
It is true, when $s_0=\Delta^{-D} _{\varepsilon_1\varepsilon_2...\varepsilon_{k-1}1[d_{k+1}-1][d_{k+2}-1][d_{k+3}-1]...}$. 
Continuum power  of $M_{s_0}$ follows from continuity of $ C[E,V_n]$ because a set of  nega-D-rational numbers is no-more-than countable set in $M_{s_0}$ and one can to use one from two representations of the nega-D-rational number (for example, first) for a case, when an argument   is a such number.         

Let us  prove that the set  $M_{s_0}$ is a  nowhere dense set.  

Let us choose a cylinder  $\Delta^{M_{s_0}} _{c_1c_2...c_{n-1}}$, that a condition $\varepsilon_n \ne 0$ is true for $s_0=\Delta^{-D} _{\varepsilon_1\varepsilon_2...\varepsilon_n...}$. Let us  investigate the mutual placement of cylinders 
 $\Delta^{M_{s_0}} _{c_1c_2...c_{n-1}0}$, $\Delta^{M_{s_0}} _{c_1c_2...c_{n-1}\varepsilon_{n}}$. Let  $n$ be an even number, then 
$$
\inf \Delta^{M_{s_0}} _{c_1c_2...c_{n-1}\varepsilon_{n}}-\sup \Delta^{M_{s_0}} _{c_1c_2...c_{n-1}0}=\sum^{n-1} _{i=1} {\frac{(-1)^ic_i}{d_1d_2...d_i}}+\frac{\varepsilon_{n}}{d_1d_2...d_{n}}-
$$
$$
-\sum^{\infty} _{k=1} {\frac{\varepsilon_{n+2k-1}}{d_1d_2...d_{n+2k-1}}}-\sum^{n-1} _{i=1} {\frac{(-1)^ic_i}{d_1d_2...d_i}}-\sum^{\infty} _{k=1} {\frac{\varepsilon_{n+2k}}{d_1d_2...d_{n+2k}}}=
$$
$$
=\frac{\varepsilon_{n}}{d_1d_2...d_{n}}-\sum^{\infty} _{k=1} {\frac{\varepsilon_{n+k}}{d_1d_2...d_{n+k}}}=\frac{1}{d_1d_2...d_n}\left(\varepsilon_n-\sum^{\infty} _{k=1} {\frac{\varepsilon_{n+k}}{d_{n+1}...d_{n+k}}}\right)\ge 0.
$$
That is  cylinders are left-to-right situated  and the last-mentioned difference  equals to zero, when $s_0=\Delta^{-D} _{\varepsilon_1\varepsilon_2...\varepsilon_{n-1}1[d_{n+1}-1][d_{n+2}-1][d_{n+3}-1]...}$. 

Analogously, for the  inequality   
$$
\inf \Delta^{M_{s_0}} _{c_1c_2...c_{n-1}0}-\sup \Delta^{M_{s_0}} _{c_1c_2...c_{n-1}\varepsilon_{n}}\ge 0,
$$
the same result is obtained in the case of odd $n$.  That is  cylinders $\Delta^{M_{s_0}} _{c_1c_2...c_{n}}$ are right-to- left  situated. 
So, for any interval  belonging to  $[\inf M_{s_0};\sup M_{s_0}]$ there exists subinterval, that the subinterval  does not contain points from $M_{s_0}$ because  $\Delta^{M_{s_0}} _{c_1c_2...c_{n-1}0}\cap~\Delta^{M_{s_0}} _{c_1c_2...c_{n-1}\varepsilon_{n}}\ne~\varnothing $ iff 
$$
\varepsilon_n=0  ~\mbox{or}~  s_0=\Delta^{-D} _{\varepsilon_1\varepsilon_2...\varepsilon_{n-1}1[d_{n+1}-1][d_{n+2}-1][d_{n+3}-1]...}.
$$

Let us prove that  $M_{s_0}$ is a closed  set without isolated points. Let us choose an arbitrary limit   point $x_0$ from $M_{s_0}$.
From definition of the point it follows that interval $(x_0-\varepsilon;x_0+~\varepsilon)$ contains at least one point from $M_{s_0}$ (not coinciding with $x_0$) for all  $\varepsilon >0$. If there   does not exist  unique closed interval $I^{M_{s_0}} _{\delta_1(x_0)\delta_2(x_0)...\delta_n(x_0)}$, that $x_0$ belongs to the closed interval, then $x_0$ belongs to one from adjacent to $M_{s_0}$ intervals. Therefore, there exists $\varepsilon_0 >0$ such that $(x_0-~\varepsilon_0;x_0+~\varepsilon_0)\cap~M_{s_0}=~\varnothing$. In this case $x_0$ is not limit. If there exists 
closed interval $I^{M_{s_0}} _{\delta_1(x_0)\delta_2(x_0)...\delta_n(x_0)}$, then 
$$
M_{s_0} \ni \bigcap^{\infty} _{n=1} {I^{M_{s_0}} _{\delta_1(x_0)\delta_2(x_0)...\delta_n(x_0)}}=x_0.
$$

Hence, $M_{s_0}$ is a closed set. 

Suppose that there exists certain  isolated point  $x^{'}=\Delta^{-D} _{\delta_1\delta_2...\delta_n...}$. Then there exists   $\varepsilon_0 >0$ such that 
\begin{equation}
\label{8}
(x^{'}-\varepsilon_0;x^{'}+\varepsilon_0)\cap (M_{s_0}\setminus \{x^{'}\})=\varnothing. 
\end{equation}
Let us choose a number $m$ such that \   $d(\Delta^{M_{s_0}} _{\delta_1\delta_2...\delta_m})<\varepsilon_0$ \ and  \  $\varepsilon_{m+1}(s_0)\ne 0$. Then $\Delta^{M_{s_0}} _{\delta_1\delta_2...\delta_m}\subset~(x^{'}-\varepsilon_0;x^{'}+\varepsilon_0)$ and
$$
x^{'}\ne x =\Delta^{-D} _{\delta_1\delta_2...\delta_n\sigma\delta_{m+2}...}\in (x^{'}-\varepsilon_0;x^{'}+\varepsilon_0)\cap M_{s_0},~\mbox{where}
$$
$$
\sigma=\begin{cases}
\varepsilon_{m+1},&\text{if $\delta_{m+1}=0$;}\\
0,&\text{if $\delta_{m+1}\ne 0$.}
\end{cases}
$$
The last contradicts \eqref{8}. So, the assumption  is  false. The set $M_{s_0}$ does  not contain  isolated points. 

Let us calculate a Lebesgue measure of  $M_{s_0}$.  Let  $F_k$  be a  union of cylinder closed intervals $I^{M_{s_0}} _{c_1c_2...c_k}$ of rank $k$ ($c_k \in A^{'} _k$). Then $M_{s_0}\subset F_k \subset F_{k+1}$ for all $k \in \mathbb N$ and $\lambda (M_{s_0})\le~\lim_{k \to \infty} {F_k}$. 
From $d(\Delta^{M_{s_0}} _{c_1c_2...c_n})=~|I^{M_{s_0}} _{c_1c_2...c_n}|$ and properties of $\Delta^{M_{s_0}} _{c_1c_2...c_n}$  it follows that
$$
\lambda (M_{s_0})\le \lim_{k \to \infty} {\left(2^k \cdot \Delta^D _{\underbrace{0...0}_{k}\varepsilon_{k+1}(s_0)\varepsilon_{k+2}(s_0)\varepsilon_{k+3}(s_0)...}\right)}=
$$
$$
=\lim_{k \to \infty} {\left(\frac{2^k}{d_1d_2...d_k}\cdot \sum^{\infty} _{i=k+1} {\frac{\varepsilon_{i}(s_0)}{d_{k+1}...d_i}}\right)}=0.
$$
\end{proof}

\end{document}